\documentclass[11pt]{article}

\usepackage{amsmath,amssymb,amsfonts,amsthm,amsbsy}
\usepackage{mathrsfs,bbm,bm,dsfont}
\usepackage{graphicx,float}
\usepackage{verbatim,fancyhdr}
\usepackage[hang]{caption}
\usepackage{cancel}
\usepackage{tikz}
\usepackage[dvipsnames]{xcolor}
\usepackage[authoryear]{natbib}
\usepackage[
  bookmarks,
  bookmarksnumbered,
  colorlinks=true,
  pdfstartview=FitV,
  linkcolor=Red,
  citecolor=blue!90!green,
  urlcolor=blue!90!green
]{hyperref}
\hypersetup{
  pdftitle={Asymptotics and finite sample bounds for prediction and smoothing in Wright--Fisher hidden Markov models},
  pdfauthor={Luigi M. Malgieri, Filippo Ascolani, Matteo Giordano, Matteo Ruggiero}
}

\graphicspath{{figures/}}

\allowdisplaybreaks

\bibpunct{\textcolor{blue!90!green}{(}}{\textcolor{blue!90!green}{)}}{\textcolor{blue!90!green}{;}}{a}{\textcolor{blue!90!green}{,}}{\textcolor{blue!90!green}{;}}

\theoremstyle{plain}

\newtheorem{theorem}{Theorem}[section]
\newtheorem{lemma}[theorem]{Lemma}
\newtheorem{proposition}[theorem]{Proposition}
\newtheorem{corollary}[theorem]{Corollary}

\theoremstyle{definition}

\theoremstyle{remark}

\newcommand{\data}[1]{Y^{(n)}_{#1}}

\def\aa{\alpha}

\def\kk{\mathbf{k}}

\newcommand{\trans}[2]{p_{#1 | #2}}
\newcommand{\bridge}[3]{b_{#1 | #2,#3}}
\newcommand{\Mult}{\mathrm{Cat}}

\newcommand{\post}{q}

\def\nn{\mathbf{n}}

\newcommand{\Z}{\mathbb{Z}}

\newcommand{\TV}[2]{d_{TV}\!\left(#1, #2\right)}

\newcommand{\given}{|}

\newcommand{\prob}[1]{\mathbb{P}\!\left(#1\right)}
\newcommand{\mean}[1]{\mathbb{E}\left[#1\right]}

\newcommand{\sX}{\mathcal{X}}
\newcommand{\de}{\mathrm{d}}

\newcommand{\coal}[1]{p_{#1}(t)}
\newcommand{\ncoal}[2]{p_{#1, #2}(t)}

\newcommand{\Dirmeasvanilla}[1]{\pi_{\aa}}

\let\oldsum\sum
\renewcommand{\sum}{\oldsum\limits}
\let\oldprod\prod
\renewcommand{\prod}{\oldprod\limits}
\let\oldlim\lim
\renewcommand{\lim}{\oldlim\limits}
\let\oldsup\sup
\renewcommand{\sup}{\oldsup\limits}

\newcommand{\real}{\mathbb{R}}
\newcommand{\integer}{\mathbb{Z}}

\newcommand{\maybeincludegraphics}[2][]{%
  \IfFileExists{#2}{%
    \includegraphics[#1]{#2}%
  }{%
    \fbox{\parbox[c][0.22\textheight][c]{0.45\textwidth}{\centering Missing figure\\\texttt{#2}}}%
  }%
}

\renewcommand{\epsilon}{\varepsilon}

\title{Asymptotics and finite sample bounds for prediction and smoothing in Wright--Fisher hidden Markov models}
\author{Luigi M. Malgieri$^{1}$, Filippo Ascolani$^{1}$, Matteo Giordano$^{2}$ and Matteo Ruggiero$^{3}$\\[2mm]
\small $^{1}$Department of Statistical Science, Duke University, Durham, NC, USA\\
\small $^{2}$ESOMAS Department and Collegio Carlo Alberto, University of Torino, Torino, Italy\\
\small $^{3}$Stern School of Business, New York University Abu Dhabi, UAE}
\date{}

\begin{document}
\maketitle
\begin{abstract}
We study prediction and smoothing in hidden Markov models with a latent signal given by a multi-type Wright--Fisher diffusion and discrete-time categorical observations, motivated by repeated-sampling time-series settings, including temporally binned ancient-DNA data, in which noisy frequency counts are recorded at finitely many times. Our focus is on the exact Bayesian predictive and smoothing distributions available under parent-independent mutation, in relation to their large-sample targets under repeated within-time sampling. For a fixed collection-time grid and diverging within-time sample sizes, we show that the exact Wright--Fisher predictive and smoothing distributions converge in total variation to the corresponding population transition and bridge laws at the limiting neighboring frequencies. We then derive explicit finite-sample control for the predictive law and a corresponding finite-sample bound for the marginal smoother. Finally, at the inspection times, we show that the joint conditional law concentrates at the target frequencies and that its active coordinates are asymptotically Gaussian, while coordinates with zero true frequencies converge to Gamma limits at faster rates. Our regime imposes no restriction on dependence across inspection times beyond within-time sampling. The analysis rests on a fixed-interval tail bound for Kingman's coalescent block-counting process, which is of independent interest.
\end{abstract}

\tableofcontents

\section{Introduction}

Hidden Markov models (HMMs) provide a standard framework for inferring latent dynamics from noisy discrete-time observations; see \cite{cappe2005,zucchini2009hidden,Sarkka2013}. An unobserved signal evolves according to a Markov process, while data are emitted conditionally on the current state. The primary inferential tasks are filtering and smoothing, namely reconstructing current and past or intermediate states from the observed data. These problems are well understood in linear-Gaussian and finite-state settings: linear-Gaussian models admit exact Kalman and Kalman--Bucy recursions \citep{kalman1960,kalmanbucy1961}, while finite-state HMMs support a substantial asymptotic literature for likelihood-based inference and stability \citep{baum1970,bickel1998,douc2004,leglandmevel2000}. Many modern applications, however, involve latent states that evolve on other spaces, such as time-indexed frequency vectors, and therefore fall outside these standard frameworks.

The Wright--Fisher (WF) diffusion is the canonical model for evolving proportions because it lives on the simplex, preserves positivity and unit mass, and has a rich probabilistic structure; see \citealp{ethier_kurtz_1986,griffiths1979transition,ethier1993transition,Dawson_SPS_TR451}. WF-type dynamics have found applications in population genetics \citep{Bollback,Steinrucken,Schraiber}, econometrics \citep{Gourieroux}, mathematical finance \citep{Larsen,Filipovic}, topic modeling \citep{Perrone}, Bayesian nonparametrics \citep{Mena}, opinion dynamics \citep{Toscani,Chen}, and neuroscience \citep{DOnofrio2}. In population genetics, for example, the latent object is an allele-frequency trajectory, whereas the data are noisy frequencies obtained through counts sampled from the population types at a moderate number of times \citep{Bollback,Malaspinas2012,Schraiber,Steinrucken,Sant25}. Ancient genomic studies now provide increasingly rich time-series data of this kind \citep{Fages2019,Wutke2016}.

Under parent-independent mutation, the WF signal is Dirichlet-stationary and reversible, and its transition and bridge laws admit explicit Dirichlet-mixture representations through duality with Kingman's coalescent (\citealp{kingman1982coalescent}; see Section~\ref{sec:wf-preliminaries}). This structure has recently been exploited to obtain exact filtering, smoothing, and out-of-sample prediction algorithms from noisy categorical samples \citep{papaspiliopoulos_optimal_2014,papaspiliopoulos_conjugacy_2016,ascolani_predictive_2021,ascolani_smoothing_2023}, together with associated software \citep{KKK21}. Closely related, but methodologically distinct, exact-simulation approaches have also been developed for WF diffusions and bridges \citep{griffiths_wrightfisher_2018,jenkins2017exact,garcia-pareja_exact_2021,sant2023ewf}, as well as for inference from allele-frequency time series \citep{Sant25}. The distinction lies in the conditioning mechanism: filtering and smoothing condition on noisy frequencies through predictive and posterior laws, whereas bridge simulation conditions on endpoint states treated as observed. What is still missing is a large-sample validation linking these viewpoints by showing that the exact filtering and smoothing procedures recover the corresponding WF transition and bridge laws.

This paper studies the exact WF predictive and smoothing laws in a regime where the inspection times $0<t_1<\cdots<t_k$ are fixed, the within-time sample sizes grow, and independence is assumed only within each inspection time. This repeated-sampling, rather than high-frequency, asymptotic regime is intended for settings in which the number of inspection times is small to moderate, while the amount of information available at each inspection time is comparatively rich. Ancient-DNA studies provide one concrete motivation for this regime: observations are often aggregated into a limited number of broad temporal bins, while genome-scale data continue to accumulate within each bin \citep{He2023,Bouzid2021}. A separate motivation for allowing dependence across inspection times comes from longitudinal or panel sampling of categorical units, where repeated measurements naturally induce serial dependence across times even when the within-time observations are conditionally multinomial; see, for example, \citet{Maruotti2011}. In such settings, the inferential target is the evolving marginal composition at each time, rather than a full model for subject-level temporal dependence. When observations are independent across inspection times, finite-grid asymptotics largely reduce to a standard parametric Bayesian problem for the latent compositions. Our contribution is to show that the same transition, bridge, and Gaussian limits persist in the more general setting where observations may be longitudinally dependent across times.

The main objects of interest are conditional laws of latent states given subsets of the observations, written generically as $\post_{\mathcal I\mid \mathcal J}(\cdot\mid \data{\mathcal J})$, where $\mathcal I$ indexes the latent state(s) of interest and $\mathcal J$ the inspection times on which one conditions. For example, $\post_{i\mid i-1}(\cdot\mid \data{i-1})$ denotes the one-step predictive law of $X_i$ given the sample at time $i-1$, while $\post_{1:k\mid 1:k}(\cdot\mid \data{1:k})$ denotes the joint posterior law of $(X_1,\ldots,X_k)$ given all observations. The \emph{local} laws are those in which the conditioning set involves only the nearest inspection times around the state of interest, such as $\post_{i\mid i-1}(\cdot\mid \data{i-1})$ and $\post_{i\mid i-1,i+1}(\cdot\mid \data{i-1},\data{i+1})$.

In this regime, we first derive explicit finite-sample bounds for the local predictive and smoothing laws. Theorem~\ref{thm:predictive-empirical} controls the error made by replacing the one-step predictive law $\post_{i\mid i-1}(\cdot\mid \data{i-1})$ with the WF transition law started from the empirical composition at the previous inspection time. Theorem~\ref{cor:marginal-smoother-empirical} treats the local smoother $\post_{i\mid i-1,i+1}(\cdot\mid \data{i-1},\data{i+1})$. As an immediate consequence, the predictive bound also yields control of bounded predictive functionals (Corollary~\ref{cor:predictive-bounded-functional}). Together, these results quantify the worst-case error incurred by replacing the exact local posterior laws by corresponding WF laws based on empirical endpoint information.

We then identify the corresponding population limits. Corollary~\ref{prop:predictive-target} shows that $\post_{i\mid i-1}(\cdot\mid \data{i-1})$ converges in total variation to the WF transition distribution evaluated at the limiting previous-time frequency, while Theorem~\ref{thm:bridge-main} shows that $\post_{i\mid i-1,i+1}(\cdot\mid \data{i-1},\data{i+1})$ converges to the corresponding WF bridge distribution evaluated at the limiting neighbouring frequencies. Proposition~\ref{cor:extended-locality} further shows that these local limits are asymptotically unaffected by conditioning on additional non-adjacent inspection times: once the neighbouring endpoint frequencies are learned, more distant observations do not alter the first-order local laws.

Finally, at the inspection times we study the full joint posterior law $\post_{1:k\mid 1:k}(\cdot\mid \data{1:k})$. Theorem~\ref{thm:consistency-bvm} proves concentration and identifies a boundary-aware Bernstein--von Mises regime, where coordinates with positive true frequencies exhibit the usual Gaussian approximation, while coordinates with zero true frequencies fluctuate on the smaller $n^{-1}$ scale and converge to Gamma limits. When all true frequencies are positive, this reduces to the familiar block-diagonal Gaussian approximation. 
This result is related to the classical Bernstein--von Mises behaviour of the Multinomial--Dirichlet model \citep[Chapter 10]{vaart_asymptotic_1998}, but the present setting is different. Our asymptotic regime is designed for inference on the marginal compositions under possible longitudinal dependence across inspection times, so the joint law of the observations need not factorize and the usual product-form reduction is unavailable. Moreover, the conditional uncertainty between inspection times is governed by the corresponding WF bridge.

The main technical input throughout is a fixed-interval tail bound for the coalescent block-count process, which is of independent interest and makes it possible to truncate the transition and bridge mixtures uniformly over the data. This is contained in Proposition \ref{prop:coal-tail} below.

The paper is organized as follows. Section~\ref{sec:preliminaries} collects the WF-HMM preliminaries, including the transition, bridge, and local finite-sample formulas. Section~\ref{sec:results} formalizes the frequentist regime and then treats prediction, marginal smoothing, and joint smoothing in turn. 
Section~\ref{sec:numerics} gives a brief numerical illustration of the predictive approximation, including its application to bounded functionals. 
Section~\ref{sec:conclusion} discusses implications and limitations.
The Supplementary Material contains the proofs of all results, auxiliary lemmas, and additional numerical illustrations.


\section{Wright--Fisher HMMs and the sampling assumptions}\label{sec:preliminaries}

This section introduces the WF hidden Markov model and the exact conditional laws studied in the paper. 
For the statistical developments that follow, the key point is that these Bayesian conditional laws are explicit Dirichlet mixtures, so the asymptotic problem becomes one of controlling the associated mixing weights. The data generating mechanism under which we derive our (frequentist) asymptotic results and finite sample bounds is also described.

\subsection{The Wright--Fisher diffusion}\label{sec:wf-preliminaries}

For $d\ge2$, let $\Delta_d=\{x\in[0,1]^d:\sum\nolimits_{r=1}^d x_r=1\}$ be the usual $d$ simplex, and let $\aa=(\alpha_1,\ldots,\alpha_d)\in(0,\infty)^d$, $\theta=\sum\nolimits_{r=1}^d\alpha_r$. We write $X=\{X_t\}_{t\ge0}$ for the neutral $d$-type WF diffusion with parent-independent mutation parameter $\aa$, that is, the standard $\Delta_d$-valued WF diffusion with Dirichlet stationary and reversible law $\pi_{\aa}$; see, e.g., \citet[Chs.~10--11]{ethier_kurtz_1986} and \citet[Ch.~5]{Dawson_SPS_TR451}. This will be the prior distribution of the latent underlying signal.

Under parent-independent mutation, the transition law of the WF signal admits a Dirichlet-mixture representation arising from duality with Kingman's coalescent \citep{kingman1982coalescent}. Roughly speaking, this duality expresses moments of the forward WF diffusion through a backward ancestral counting process, which turns the transition law into a mixture over latent ancestral configurations; see also \citet{griffiths2010diffusion,papaspiliopoulos_optimal_2014}. At a heuristic level, over a time gap $s$ one first draws an auxiliary count $k\ge0$, interpretable as the number of ancestral lineages surviving backward in time, then allocates these $k$ lineages across the $d$ categories according to the starting composition $x$, and finally samples the arrival state from a Dirichlet law with updated parameter $\aa+\kk$.
This yields the exact transition density
\begin{equation}\label{eq:wf-transition-compact-sec2}
p_s(x,x')
=\sum_{\kk\in\mathbb Z_+^d} p_{\kk}^x(s)\,\pi_{\aa+\kk}(x'),
\qquad x,x'\in\Delta_d,
\end{equation}
where
\[
p_{\kk}^x(s):=p_{|\kk|}(s)\binom{|\kk|}{\kk}x^{\kk},
\qquad
x^{\kk}:=\prod_{r=1}^d x_r^{k_r}.
\]
Here $\{p_k(s)\}_{k\ge0}$ are the limiting block-count probabilities from Kingman's coalescent over a gap of length $s$, and $\pi_{\aa+\kk}$ denotes the Dirichlet density with parameter $\aa+\kk$. See \citet{griffiths1979transition,ethier1993transition} for the WF transition formula and \citet{griffiths_lines_1980,tavare_1984} for the coalescent weights. These mixture weights are the population quantities that will later appear as asymptotic limits of the finite-sample predictive laws.

Given $0<t<s<u$ and endpoints $x,x'\in\Delta_d$, the WF bridge law at time $s$ is the conditional law of the latent composition at the intermediate time $s$, given that the process is at $x$ at time $t$ and at $x'$ at time $u$. 
Under parent-independent mutation, this bridge law is again an explicit Dirichlet mixture:
\begin{equation}\label{eq:wf-bridge-mixture-sec2}
\bridge{s}{t}{u}(z\mid x,x')
=\sum_{\kk\in\mathbb Z_+^d}\sum_{\kk'\in\mathbb Z_+^d}
w_{\kk,\kk'}(s-t,u-s)\,\pi_{\aa+\kk+\kk'}(z),
\end{equation}
where
\begin{equation}\label{eq:wf-bridge-weights-sec2}
w_{\kk,\kk'}(h,h')
\propto
C_{\kk,\kk'}\,p_{\kk}^{x}(h)\,p_{\kk'}^{x'}(h'),
\end{equation}
and $C_{\kk,\kk'}$ is the explicit coefficient arising from the product rule for Dirichlet densities,
$\pi_{\aa+\kk}(z)\pi_{\aa+\kk'}(z)\propto C_{\kk,\kk'}\,\pi_{\aa+\kk+\kk'}(z)$; see equation~\eqref{eq:wf-bridge-C-sec2} of the Supplementary Material.

Thus the bridge combines a left and a right ancestral count vector, one propagated from each endpoint, and merges the corresponding Dirichlet components through the coefficient $C_{\kk,\kk'}$. For two-type bridge formulas and genealogical interpretations, see \citet{griffiths_wrightfisher_2018}. This bridge mixture is the population law that later appears as the asymptotic target of the local smoothing distributions.

\subsection{The Bayesian model}\label{sec:hmm-posteriors}

Fix an inspection grid $0<t_1<\cdots<t_k$ and let $X_i:=X_{t_i}$ denote the latent diffusion state at time $i$. Conditionally on $X_i=x\in\Delta_d$, the observations collected at time $i$ are categorical with parameter $x$:
\begin{equation}\label{eq:observation-model-sec2}
Y_{i,j}\mid X_i \stackrel{\text{i.i.d.}}{\sim} \mathrm{Cat}(X_i),
\qquad i=1,\ldots,k,\quad j=1,\ldots,n_i.
\end{equation}
Equivalently, if $\nn_i=(n_{i,1},\ldots,n_{i,d})$ denotes the vector of category counts at time $t_i$, then $\nn_i\mid X_i=x$ is multinomial with size $n_i$ and cell probabilities $x$.  The Bayesian model is completed by taking the law of a Wright--Fisher process with transition density \eqref{eq:wf-transition-compact-sec2} as the prior for the latent signal $X$.


\subsection{Data generating mechanism}\label{sec:true}

For each inspection time \(i=1,\ldots,k\), let \(x_i^*\in\Delta_d\) denote the ``true'' category proportions at \(i\). We assume that for every \(n\) there is a joint law \(\mu_{1:k}^{(n)}\) for the samples \(Y_{1:k}^{(n)}\), whose within-time marginals satisfy
\begin{equation}\label{eq:true_mechanism}
Y_{i,j}\ \overset{\text{i.i.d.}}{\sim}\ \Mult(x_i^*),\qquad j=1,\ldots,n,
\end{equation}
where \(\Mult(x_i^*)\) denotes the categorical law on \(\{1,\ldots,d\}\) with probability vector \(x_i^*\). For notational simplicity we take all within-time sample sizes equal to \(n\), but all results below extend to heterogeneous sample sizes. Thus \eqref{eq:true_mechanism} imposes independence only \emph{within} each inspection time; the joint law \(\mu_{1:k}^{(n)}\) is otherwise unrestricted and may contain arbitrary dependence \emph{across} distinct times.

Let \(Y_i^{(n)}=(Y_{i,1},\ldots,Y_{i,n})\) denote the sample at time \(i\), and \(Y_{i:j}^{(n)}=(Y_i^{(n)},\ldots,Y_j^{(n)})\) the samples from times \(i\) through \(j\). Let also \(\mu_i\) be the law of a single draw \(Y_{i,1}\) at time \(i\), and denote by \(\mu_i^{(n)}=\mu_i^{\otimes n}\) and \(\mu_i^{(\infty)}=\mu_i^{\otimes \infty}\) the associated product measures. For \(1\le i\le j\le k\), let \(\mu_{i:j}^{(n)}\) denote the joint law of \((Y_i^{(n)},\ldots,Y_j^{(n)})\) and similarly \(\mu_{i:j}^{(\infty)}\) for infinite sequences; in general, however, these need not factorize across times. For future reference we set
\begin{equation}\label{eq:proportions}
\hat x_i
=
\Big(
\frac{n_{i,1}}{n},
\ldots,
\frac{n_{i,d}}{n}
\Big),
\qquad
\hat x_{1:k}
=
(\hat x_1,\ldots,\hat x_k),
\end{equation}
where $n_{i,j}$ is the multiplicity of category $j$ in $Y_i^{(n)}$. 



\subsection{Finite-sample predictive and smoothing laws}
\label{sec:local-finite-sample-formulas}

The inferential objects studied in this paper are the the conditional laws of latent states under different conditioning sets, corresponding to prediction, smoothing, and joint latent-state reconstruction.
In particular, we consider the following posterior laws:
\begin{list}{
$\bullet$
}{\itemsep=1mm\topsep=2mm\itemindent=-3mm\labelsep=2mm\labelwidth=0mm\leftmargin=9mm\listparindent=0mm\parsep=0mm\parskip=0mm\partopsep=0mm\rightmargin=0mm\usecounter{enumi}}
\setcounter{enumi}{0}
\item the one-step predictive posterior $\post_{i\mid i-1}(\cdot\mid \data{i-1})$ of $X_i$ given the sample at time $i-1$;
\item the local smoother posterior $\post_{i\mid i-1,i+1}(\cdot\mid \data{i-1},\data{i+1})$ of $X_i$ given the samples at times $i-1$ and $i+1$;
\item the enlarged-conditioning analogues of the two above laws,  $\post_{i\mid 1:i-1}(\cdot\mid \data{1:i-1})$ and $\post_{i\mid 1:i-1,i+1:k}(\cdot\mid \data{1:i-1},\data{i+1:k})$;
\item the full marginal smoother $\post_{i\mid 1:k}(\cdot\mid \data{1:k})$, i.e.\ the marginal posterior of $X_i$ under all observations;
\item the joint smoother $\post_{1:k\mid 1:k}(\cdot\mid \data{1:k})$, i.e.\ the posterior of $(X_1,\ldots,X_k)$ under all observations;.
\end{list}
A key consequence of the coalescent duality recalled in Section \ref{sec:wf-preliminaries} is that the above conditional laws can still be written explicitly as finite mixtures of Dirichlet distributions whose weights are determined by finite coalescent count configurations.
Specifically, for $h_i=t_i-t_{i-1}$, the one-step predictive density is a finite Dirichlet mixture:
\begin{equation}\label{eq:predictive-density-sec2}
\post_{i\mid i-1}(x\mid \data{i-1})
=\sum_{\kk\le\nn_{i-1}} p_{\nn_{i-1},\kk}(h_i)\,\pi_{\aa+\kk}(x),
\qquad x\in\Delta_d,
\end{equation}
where
\[
p_{\nn, \kk}(s):=p_{|\nn|,|\kk|}(s)\binom{n}{k}^{-1}\binom{\nn}{\kk}.
\]
Here $\{p_{n,k}(s)\}_{k \geq 0}$ are the transition probabilities of a suitable death process starting from $n$; see \citet{papaspiliopoulos_optimal_2014} and \citet[Lemma~4.1]{papaspiliopoulos_conjugacy_2016}. Thus the population transition mixture \eqref{eq:wf-transition-compact-sec2} is replaced, at finite sample size, by a finite Dirichlet mixture indexed by all surviving count vectors $\kk\le\nn_{i-1}$.

Similarly, the local smoother admits the representation
\begin{equation}\label{eq:smoother-sec2}
\post_{i\mid i-1,i+1}(x\mid \data{i-1},\data{i+1})
=\sum_{\kk\le\nn_{i-1},\,  \kk'\le\nn_{i+1}}
w^{\nn_{i-1},\nn_{i+1}}_{\kk,\kk'}(h_i,h_{i+1})\,
\pi_{\aa+\kk+\kk'}(x),
\end{equation}
where the weights couple the forward and backward coalescent contributions through
\begin{equation}\label{smoother-weights}
w^{\nn_{i-1},\nn_{i+1}}_{\kk,\kk'}(h_i,h_{i+1})
\propto
C_{\kk,\kk'}\,
p_{\nn_{i-1},\kk}(h_i)\,
p_{\nn_{i+1},\kk'}(h_{i+1}),
\end{equation}
with $C_{\kk,\kk'}$ as in Supplementary Material, equation~\eqref{eq:wf-bridge-C-sec2}; see \citet{KKK21}. This is the finite-sample analogue of the population bridge mixture \eqref{eq:wf-bridge-mixture-sec2}.

The importance of \eqref{eq:predictive-density-sec2} and \eqref{eq:smoother-sec2} is that they make the predictive and smoothing laws fully explicit at finite sample size. This makes it possible to derive quantitative finite-sample bounds and to identify the corresponding population limits by comparing these Dirichlet mixtures with the transition and bridge mixtures in \eqref{eq:wf-transition-compact-sec2} and \eqref{eq:wf-bridge-mixture-sec2}. Moreover, by combining \eqref{eq:predictive-density-sec2} and \eqref{eq:smoother-sec2}, it is possible to express all the other posterior laws of interest as suitable finite mixtures of Dirichlet laws. 

\section{Asymptotic results and finite-sample approximations}\label{sec:results}

\subsection{A fixed-interval coalescent tail estimate}\label{sec:probability_tails}

The analytical validation of WF filtering and smoothing requires passing from finite-sample Dirichlet mixtures to their infinite-population limits.
As discussed in Section \ref{sec:wf-preliminaries}, both the transition density \eqref{eq:wf-transition-compact-sec2} and the bridge density \eqref{eq:wf-bridge-mixture-sec2} are infinite mixtures where the index $k$ represents the number of ancestral lineages surviving over a time interval $t$. The corresponding weights depend on $p_k(t)=\mathbb{P}(N_t=k \mid N_0 = \infty)$, where $N_t$ is the block-counting process of Kingman's coalescent. Similarly, the predictive and smoothing distributions \eqref{eq:predictive-density-sec2} and \eqref{eq:smoother-sec2} depend on $p_{n,k}(t)=\mathbb{P}(N_t=k \mid N_0 =n)$, where the Kingman's coalescent is started at $n$, the total number of observations collected at each time.

Therefore, a key tool in establishing total variation convergence for these laws is to provide uniform control over the mixture tails. Specifically, we require an explicit bound on the remainder
\begin{equation}\label{eq:remainder-def}
\sum_{k>M} p_{n,k}(t) = \mathbb{P}(N_t > M \mid N_0 = n),
\end{equation}
which quantifies the truncation error incurred by keeping only terms with at most $M$ lineages. 
While the exact density of $N_t$ is known \citep{tavare_1984}, its series form with alternating signs is not convenient for the arguments used below. The following upper bound is based on Markov's inequality and the representation of the Kingman's coalescent thorugh sums of exponential distributions.

\begin{proposition}\label{prop:coal-tail}
Fix \(t>0\). There exists a constant \(c>0\) such that for every $n \in \mathbb{N}\cup \{+\infty\}$ and \(M\ge 0\),
\[
\sum_{k>M}p_{n,k}(t)\ \le\ \exp\{-cM^2 t\}.
\]
In particular, \(p_{n,k}(t)\le \exp\{-ck^2 t\}\) for all \(k\).
\end{proposition}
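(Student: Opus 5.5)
The plan is to express the event $\{N_t>M\}$ as a statement about the time the block-counting process needs to descend from level $n$ to level $M$, and then apply a Chernoff (exponential Markov) bound. In the death process underlying $p_{n,k}(t)$ (Kingman's coalescent, possibly with mutation), the process leaves level $j$ at rate $\lambda_j=j(j-1+\theta)/2\ge j(j-1)/2$. Write $T_{n,M}=\sum_{j=M+1}^{n}E_j$ for the descent time, where the $E_j\sim\mathrm{Exp}(\lambda_j)$ are independent. Then $\sum_{k>M}p_{n,k}(t)=\mathbb P(T_{n,M}>t)$. Since $T_{n,M}$ is stochastically increasing in $n$ (and in the reciprocals of the rates), it suffices to bound $T_{\infty,M}$ with rates replaced by $j(j-1)/2$. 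This also covers $n=+\infty$, since the sum converges almost surely when the coalescent comes down from infinity. We may assume $M$ is an integer by replacing it with $\lfloor M\rfloor$, at the cost of shrinking $c$ (note $\lfloor M\rfloor\ge M/2$ for $M\ge1$).

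\textbf{Chernoff step.} For an integer $M\ge1$, take $\lambda=M^2/4$. Every $j\ge M+1$ satisfies $j(j-1)\ge M^2$, so $u_j:=2\lambda/(j(j-1))\le 1/2$. Using $-\log(1-u)\le 2u$ for $u\le1/2$,
\[
\mathbb P(T>t)\le e^{-\lambda t}\prod_{j>M}(1-u_j)^{-1}\le \exp\Big\{-\tfrac{M^2t}{4}+\sum_{j>M}\tfrac{4\lambda}{j(j-1)}\Big\}=\exp\Big\{-\tfrac{M^2t}{4}+M\Big\},
\]
where the last equality uses the telescoping identity $\sum_{j>M}1/(j(j-1))=1/M$. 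For $M\ge M_0:=\lceil 8/t\rceil$ we have $M\le M^2t/8$, so the right-hand side is at most $\exp\{-M^2t/8\}$.

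\textbf{Small $M$.} For $M=0$ the bound equals $1$ and there is nothing to prove. For $1\le M<M_0$ we use monotonicity: $\mathbb P(N_t>M\mid N_0=n)\le \mathbb P(N_t\ge2\mid N_0=\infty)=:q$. We have $q<1$ because the coalescent started from infinity hits one block before time $t$ with positive probability; this follows from the Chernoff bound at a fixed level combined with an explicit positive probability of the finitely many remaining jumps. Choosing
\[
c=\min\Big\{\tfrac18,\ \tfrac{-\log q}{M_0^2t}\Big\}
\]
covers these finitely many cases. This step, securing uniformity for small $M$ and in particular the strict inequality $q<1$ for $n=\infty$, is where some care is needed; the large-$M$ regime is routine.

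\textbf{Pointwise bound.} The claim $p_{n,k}(t)\le e^{-ck^2t}$ follows from $p_{n,k}(t)\le\sum_{k'>k-1}p_{n,k'}(t)\le e^{-c(k-1)^2t}$. For $k\ge2$ we use $(k-1)^2\ge k^2/4$ and absorb the factor $4$ into $c$. For $k=1$ we need $p_{n,1}(t)\le 1-\delta$ uniformly in $n$. For $n\ge2$ this holds by the small-$M$ argument above. For $n=1$ it uses the mutation death rate $\theta/2>0$, which gives $p_{1,1}(t)=e^{-\theta t/2}<1$. After possibly reducing $c$ once more, all cases hold with a single constant $c$.
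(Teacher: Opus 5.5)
Your argument is correct, and its core is the paper's. You identify $\sum_{k>M}p_{n,k}(t)$ with $\mathbb P(T_{n,M}>t)$ for a sum of independent exponentials, apply an exponential Markov bound with $s\asymp M^2$, and use $\sum_{j>M}1/\lambda_j\lesssim 1/M$ to reach $\exp\{-aM^2t+bM\}$.

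The details differ, in your favour:
\begin{itemize}
\item The paper keeps $\theta$, takes $s=M^2/3$ and compares with an integral, leaving the constants unspecified. Your domination by the rates $j(j-1)/2$ together with the telescoping identity gives the explicit, $\theta$-free bound $\exp\{-M^2t/4+M\}$.
\item More substantively, the paper finishes by asserting $-M^2t/2+r'M\le -cM^2t$. At $M=1$ this requires $r'<t/2$, so it fails for small $t$. In fact, if $t<\mathbb E[T_{\infty,1}]$, the convex function $s\mapsto -st+\log\mathbb E[e^{sT_{\infty,1}}]$ vanishes at $0$ with positive slope there, so no exponential Markov bound is nontrivial at $M=1$.
\end{itemize}
Your separate small-$M$ step via $q<1$ is therefore genuinely needed to get a constant uniform in $M$. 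You also make explicit the passage from $(k-1)^2$ to $k^2$ that the paper's ``in particular'' leaves implicit.

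One justification needs repair. For $k=1$ and $n\ge2$ you cite the small-$M$ argument. That argument bounds $\mathbb P(N_t\ge2)$ from above, which gives no upper bound on $p_{n,1}(t)=\mathbb P(N_t=1)$. What you need is $\sup_n\mathbb P(N_t\ge1\mid N_0=n)=\mathbb P(T_{\infty,0}>t)<1$. Your $q<1$ argument gives this once it is run with the true rates, i.e.\ appending the final step $1\to0$ at rate $\theta/2>0$. Alternatively, for $n\ge2$ you can use $1-p_{n,1}(t)\ge\mathbb P(N_t\ge2\mid N_0=2)=e^{-(1+\theta)t}$. The uniform bound handles $k=1$ for every $n$. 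It is also what you would need for non-integer $M\in(0,1)$, where rounding down to $M=0$ yields only the trivial bound $1$.
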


The proof is given in Supplementary Material, Section~\ref{app:truncation}.
This bound is the main technical input behind the results that follow. Small-time lineage asymptotics, deterministic speeds, and large-deviation results for related pure-death models are available in the literature; see, for example, \citet{griffiths_asymptotic_1984,Schweinsberg2000,BerestyckiBerestyckiLimic2010,BansayeMeleardRichard2016,Depperschmidt2015,SagitovFrance2017}. However, existing estimates do not directly provide the fixed-interval truncation control needed for the WF transition and bridge mixtures.  

\subsection{Prediction}
\label{sec:prediction}


As a first result, the next theorem proves that the predictive distribution \eqref{eq:predictive-density-sec2} can be well approximated by the one-step predictive evaluated at the empirical proportions $\hat{x}_{i-1}$. More precisely, we provide a finite sample bound in the total variation distance, herefater denoted by $d_{\mathrm{TV}}(\cdot)$. 

\begin{theorem}\label{thm:predictive-empirical}
For every $i=2,\dots,k$, there exists a constant $L_{h_i}$ such that
\[
\TV{\post_{i\mid i-1}(\cdot\mid \data{i-1})}{\trans{i}{i-1}(\cdot\mid \hat x_{i-1})}
\le
\varepsilon_{i,n}:=
L_{h_i}\left(
\frac{2\theta}{n+\theta}
+\sqrt{\frac{d}{n+\theta+1}}
\right)
\]
almost surely under $\mu_{i-1}^{(n)}$.
\end{theorem}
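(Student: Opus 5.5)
Both laws are Dirichlet mixtures, so I will compare them through a coupling of their mixing weights. Write $\nn=\nn_{i-1}$, $\hat x=\hat x_{i-1}=\nn/n$ and $h=h_i$. The predictive law \eqref{eq:predictive-density-sec2} mixes $\pi_{\aa+\kk}$ over
\[
p_{\nn,\kk}(h)=p_{n,|\kk|}(h)\,\mathrm{MvHyp}(\kk;\nn,|\kk|).
\]
The target \eqref{eq:wf-transition-compact-sec2} mixes over $p_{|\kk|}(h)\,\mathrm{Mult}(\kk;|\kk|,\hat x)$. Joint convexity of total variation in the mixing measure gives
\[
d_{TV}\le \sum_{m\ge0}\big|p_{n,m}(h)-p_m(h)\big| + \sum_{m\ge0} p_m(h)\, \sup_{\text{couplings}}\mathbb E\big[d_{TV}(\pi_{\aa+\kk},\pi_{\aa+\kk'})\big],
\]
where the second term couples, for fixed $m$, hypergeometric $\kk$ and multinomial $\kk'$ draws of size $m$. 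This produces two error sources, one from the block-count law and one from the allocation law, which I expect to give the two terms of $\varepsilon_{i,n}$. Proposition~\ref{prop:coal-tail} lets me keep all constants summable, and its bound holds uniformly over $\nn$, which is why the result is almost sure.

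\emph{Block-count term.} I would use the standard fact that the death process started at $n$ converges to the one from infinity, with an explicit rate. Kingman's coalescent is consistent under restriction: the block count started from $\infty$ can be obtained from the one started from $n$ by adding lineages. Both can therefore be realised on one space with $N^{(n)}_h\le N^{(\infty)}_h$. Then $\sum_m|p_{n,m}-p_m|\le 2\,\mathbb P(N^{(n)}_h\neq N^{(\infty)}_h)$. This probability is bounded by the chance that among the $N^{(\infty)}_h=m$ ancestral lineages at least one has no descendant among the $n$ sampled individuals. By the Pólya-urn/Dirichlet description of the family sizes, this has probability roughly $m(m-1)/(n+\cdot)$. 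A more careful accounting with the $\theta$-weighted Hoppe urn should give exactly the order $\theta/(n+\theta)$ per lineage, and summing $m^2p_m(h)$ (finite by Proposition~\ref{prop:coal-tail}) gives $C_h\,2\theta/(n+\theta)$. If the precise $\theta$ form does not come out, I would settle for the $1/n$ rate absorbed into $L_h$, noting that $1/n\lesssim (n+\theta)^{-1}$ up to constants depending on $\theta$.

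\emph{Allocation term.} For fixed $m$, I would couple sampling $m$ balls without replacement from the urn $\nn$ with sampling with replacement, as in Freedman's bound. The draws agree unless a repeat occurs, so the total variation distance between the hypergeometric and multinomial laws is at most $m(m-1)/(2n)$. Alternatively, and to obtain the $\sqrt{d/(n+\theta+1)}$ shape, I would bound the discrepancy through the Dirichlet components. Given $\kk\neq\kk'$ with $|\kk-\kk'|_1$ small, Pinsker's inequality applied to the Dirichlet Kullback–Leibler divergence yields $d_{TV}(\pi_{\aa+\kk},\pi_{\aa+\kk'})\lesssim \sqrt{\sum_r (\text{variance contributions})}$. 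A Cauchy–Schwarz step over the $d$ coordinates then produces $\sqrt{d/(n+\theta+1)}$, with the variance of the hypergeometric-versus-multinomial counts scaled by $(n+\theta+1)^{-1}$. Multiplying by $p_m(h)$ and summing with polynomial weights in $m$, again controlled by $e^{-cm^2h}$, gives the constant $L_h$.

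\emph{Main obstacle.} The main difficulty will be matching the exact form of $\varepsilon_{i,n}$, especially the $\theta$-dependent denominators, which suggest the authors work directly with the moment/Pólya-urn form of the death-process transition probabilities rather than crude coupling. I would first prove a bound of order $C_h/\sqrt n$ with $C_h=\sum_m m^2 e^{-cm^2h}$, and then refine the constants. Uniformity in the data is automatic, because no step uses the distribution of $\nn$ beyond $|\nn|=n$.
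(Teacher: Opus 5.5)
Your main line, Freedman's coupling for the allocation plus a genealogical coupling for the block count, is correct. It is a genuinely different route from the paper's.

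\textbf{How the two routes differ.} The paper never compares the mixing weights of the two Dirichlet mixtures. It uses Bayes' formula and the Markov property to write $\post_{i\mid i-1}(\cdot\mid\data{i-1})=\int p_{h_i}(x',\cdot)\,\pi_{\aa+\nn_{i-1}}(\de x')$. It then shows that $x\mapsto p_t(x,\cdot)$ is Lipschitz in total variation with constant $L_t=\sum_k k\,p_k(t)$, by tensorizing the multinomial allocation. Finally it bounds $\int\|x'-\hat x_{i-1}\|_1\,\pi_{\aa+\nn_{i-1}}(\de x')$ by the Dirichlet spread $\sqrt{d/(n+\theta+1)}$ plus the mean shift $2\theta/(n+\theta)$. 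That is exactly where the shape of $\varepsilon_{i,n}$ comes from.

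You instead split the error into a block-count discrepancy (death process from $n$ versus from $\infty$) and an allocation discrepancy (hypergeometric versus multinomial at $\hat x_{i-1}$). Both terms are $O(1/n)$ uniformly in $\nn$. So you get a sharper, dimension-free bound $C_h/n$, with $C_h$ of order $\sum_m m(m+\theta)p_m(h)$. Since $1/n\le(1+\theta)/(n+\theta)$, this implies the theorem after enlarging the constant. Your ``main obstacle'' is therefore not one: the statement only asserts that some $L_{h_i}$ exists.

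The price of your route is heavier external input: the finite-mixture formula and the family-size law of ancestral lineages. The paper's argument is self-contained. With $\hat x_{i-1}$ replaced by an arbitrary $x$, it also immediately gives Theorem~\ref{thm:predictive-rate}, which is what feeds Corollary~\ref{prop:predictive-target}.

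\textbf{Details to correct.}
\begin{itemize}
\item In the mixture bound you need the infimum over couplings (or simply the maximal coupling), not the supremum.
\item The hypergeometric law exists only for $m\le n$. So put the weights $p_{n,m}(h)$ on the allocation term, and let the block-count total variation absorb the mass at $m>n$.
\item In the block-count step, given $N^{(\infty)}_h=m$, the population proportions of the $m$ non-mutant families and of the mutant class are $\mathrm{Dir}(1,\dots,1,\theta)$. A given lineage is therefore missed by the sample with probability $(m-1+\theta)/(n+m-1+\theta)$, and the union bound gives $m(m-1+\theta)/(n+\theta)$. Your ``$m(m-1)/(n+\cdot)$'' wrongly vanishes at $m=1$, and the per-lineage order is not $\theta/(n+\theta)$ when $m\ge2$.
\item Drop the Pinsker/Kullback--Leibler alternative. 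The components $\pi_{\aa+\kk}$ in the predictive mixture carry no parameter of size $n$, so no $(n+\theta+1)^{-1}$ variance can come from them.
\end{itemize}
The $\sqrt{d/(n+\theta+1)}$ term is an artifact of the paper's first-order Lipschitz argument applied to $\pi_{\aa+\nn_{i-1}}$. You do not need to reproduce it, and your $O(1/n)$ bound shows it is not sharp.
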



The expression for $L_{h_i}$ can be found in Lemma \ref{lem:transition-lipschitz} of the Supplementary Material. The next corollary follows immediately by definition of total variation distance.

\begin{corollary}\label{cor:predictive-bounded-functional}
For every $i=2,\dots,k$ and every bounded measurable function
$f:\Delta_d\to\real$,
\[
\left|
\int_{\Delta_d} f(z)\,\post_{i\mid i-1}(z\mid \data{i-1})\,\de z
-
\int_{\Delta_d} f(z)\,\trans{i}{i-1}(z\mid \hat x_{i-1})\,\de z
\right|
\le
\|f\|_\infty\,\varepsilon_{i,n},
\]
with $\varepsilon_{i,n}$ as in Theorem~\ref{thm:predictive-empirical}.
\end{corollary}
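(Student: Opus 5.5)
The statement to prove is Corollary~\ref{cor:predictive-bounded-functional}. The plan is to reduce it directly to Theorem~\ref{thm:predictive-empirical} via the variational characterization of total variation distance. Throughout, we fix $i\in\{2,\dots,k\}$ and a realization of $\data{i-1}$ outside the $\mu_{i-1}^{(n)}$-null set where the theorem might fail. Write $P:=\post_{i\mid i-1}(\cdot\mid \data{i-1})$ and $Q:=\trans{i}{i-1}(\cdot\mid \hat x_{i-1})$. Both are probability measures on $\Delta_d$ with densities with respect to Lebesgue measure on the simplex. For $P$ this follows from \eqref{eq:predictive-density-sec2}. For $Q$ it follows from the Dirichlet mixture \eqref{eq:wf-transition-compact-sec2} with $s=h_i$ and starting point $\hat x_{i-1}$.

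The key step is the elementary inequality
\[
\left|\int f\,\de P-\int f\,\de Q\right|\le \|f\|_\infty\,\|P-Q\|_{1},
\]
combined with the identification of $d_{TV}$ with the appropriate multiple of $\|P-Q\|_1$.

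The one point requiring care is the normalization convention. If $d_{TV}(P,Q)=\sup_A|P(A)-Q(A)|=\tfrac12\int|p-q|$, then a general bounded $f$ gives $2\|f\|_\infty d_{TV}$. To obtain the constant $\|f\|_\infty$ as stated, I would handle $f$ through its oscillation. Shifting $f$ by the constant $c=(\sup f+\inf f)/2$ does not change the difference of integrals, since $P$ and $Q$ both have total mass one. After this shift, $\|f-c\|_\infty\le \|f\|_\infty$. The sharper bound is then obtained from the Hahn decomposition: taking $A=\{p>q\}$, we have $\int (f-c)(p-q)\le \|f-c\|_\infty\int|p-q|$, and in fact $\int f(p-q)\le (\sup f-\inf f)\,d_{TV}(P,Q)\le 2\|f\|_\infty d_{TV}$. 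So with the $\sup_A$ convention one genuinely only gets $\mathrm{osc}(f)\,d_{TV}$.

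I therefore expect the paper's convention to be $d_{TV}=\int|p-q|$, that is, the $L^1$ distance. Under that convention the bound reads $|\int f(p-q)|\le\|f\|_\infty\int|p-q|$ directly. I would state this convention explicitly, or equivalently note that the bound holds with $\|f\|_\infty$ replaced by $\tfrac12\mathrm{osc}(f)\le\|f\|_\infty$ under the half-$L^1$ convention. Either way, the final bound $\|f\|_\infty\,\varepsilon_{i,n}$ follows.

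To conclude, I would insert the almost-sure bound $d_{TV}(P,Q)\le\varepsilon_{i,n}$ from Theorem~\ref{thm:predictive-empirical}. This yields the claim $\mu_{i-1}^{(n)}$-almost surely, uniformly over all bounded measurable $f$, because the null set does not depend on $f$.

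The only potential obstacle is this bookkeeping of the total variation normalization. Measurability and integrability are trivial, since $f$ is bounded and both measures are probability measures.
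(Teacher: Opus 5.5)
Your reduction is the same as the paper's, which simply invokes the definition of $d_{TV}$. You are right that the normalization is the crux, but your resolution of it does not go through.

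In this paper $d_{TV}$ is the distance $\sup_A|P(A)-Q(A)|=\tfrac12\int|p-q|$. You can see this in the proof of Lemma~\ref{lem:transition-lipschitz}, which uses $\TV{\Mult(x)}{\Mult(y)}=\tfrac12\|x-y\|_1$ and $\sum_{|\kk|=k}|M_k^x(\kk)-M_k^y(\kk)|=2\,\TV{M_k^x}{M_k^y}$, and in Lemma~\ref{lm:decomposition_TV}, which uses $[0,1]$-valued test functions. With that convention, as you computed yourself, $\TV{P}{Q}\le\varepsilon_{i,n}$ only yields $\mathrm{osc}(f)\,\varepsilon_{i,n}\le 2\|f\|_\infty\varepsilon_{i,n}$. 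Nothing better can be extracted from that bound used as a black box: take $f=\mathbf{1}_A-\mathbf{1}_{A^c}$ with $A=\{p>q\}$.

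Your ``equivalent'' fallback is also incorrect. The constant $\tfrac12\mathrm{osc}(f)$ goes with the full-$L^1$ convention, not the half-$L^1$ one. So ``either way'' is not justified. As written, your argument gives the constant $\|f\|_\infty$ only for $f$ of constant sign (which does cover Simpson's index), and $2\|f\|_\infty$ in general.

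The missing step is to note that the argument behind Theorem~\ref{thm:predictive-empirical} actually controls the full $L^1$ distance. In the proof of Lemma~\ref{lem:transition-lipschitz}, the same chain of inequalities gives
\[
\int_{\Delta_d}\bigl|p_t(x,z)-p_t(y,z)\bigr|\,\de z\le\sum_{k\ge0}p_k(t)\sum_{|\kk|=k}\bigl|M_k^x(\kk)-M_k^y(\kk)\bigr|\le\sum_{k\ge0}k\,p_k(t)\,\|x-y\|_1=L_t\|x-y\|_1 .
\]
So the lemma holds with $\int|p-q|$ in place of $d_{TV}$; equivalently, $d_{TV}\le\tfrac{L_t}{2}\|x-y\|_1$. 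Now mix over $x'\sim\pi_{\aa+\nn_{i-1}}$ as in the proof of Theorem~\ref{thm:predictive-rate}, with $x=\hat x_{i-1}$, using Lemma~\ref{lem:dirichlet-mean-dev} and \eqref{eq:to_use_later}. This gives $\int|p-q|\le\varepsilon_{i,n}$, where $p,q$ are the densities of your $P,Q$. Your inequality $|\int f\,\de P-\int f\,\de Q|\le\|f\|_\infty\int|p-q|$ then yields exactly $\|f\|_\infty\,\varepsilon_{i,n}$.

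The paper's one-line ``by definition'' justification skips the same factor of $2$, so this step is needed for either argument.
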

As a simple illustration, consider Simpson's diversity index
\begin{equation}\label{diversity index}
f(x)=1-\sum_{j=1}^d x_j^2,
\qquad x\in\Delta_d.
\end{equation} 
Since $0\le f(x)\le1$ on $\Delta_d$, Corollary~\ref{cor:predictive-bounded-functional} gives
\[
\left|
\mathbb E_{\post_{i\mid i-1}(\cdot\mid \data{i-1})}
\!\left[1-\sum_{j=1}^d X_{i,j}^2\right]
-
\mathbb E_{\trans{i}{i-1}(\cdot\mid \hat x_{i-1})}
\!\left[1-\sum_{j=1}^d X_{i,j}^2\right]
\right|
\le \varepsilon_{i,n}.
\]
Thus the predictive expectation of diversity under the exact filter can be replaced by the corresponding expectation under the WF transition started from $\hat x_{i-1}$, with explicit worst-case error control. The bound depends only on the observed composition $\hat x_{i-1}$, the sample size $n$, the simplex dimension $d$, the precision parameter $\theta$, and the inspection gap $h_i$ through $L_{h_i}$. In practice, one may approximate $\mathbb E_{\trans{i}{i-1}(\cdot\mid \hat x_{i-1})}[f(X_i)]$
numerically, for instance by Monte Carlo based on exact WF simulation methods; see, for example, \citet{jenkins2017exact,garcia-pareja_exact_2021,sant2023ewf}. Corollary~\ref{cor:predictive-bounded-functional} therefore controls the error incurred by replacing the exact predictive expectation by its WF-transition surrogate. The bound is most informative when the within-time sample size is large relative to the simplex dimension and the inspection gap is not too small; since $L_{h_i}$ grows like $h_i^{-1}$ for small $h_i$, one expects the worst-case error bound to be useful when $n$ is large compared to $d/h_i^2$. Section~\ref{sec:numerics} provides an empirical illustration of this approximation.

We close by identifying the population target behind the finite-sample approximation. Indeed, by combining Theorem \ref{thm:predictive-empirical} with the convergence of $\hat x_{i-1}$ to $x_{i-1}^*$, the next proposition proves the convergence of the predictive distribution to the one-step predictive evaluated at the true value $x_{i-1}^*$.

\begin{corollary}\label{prop:predictive-target}
Let $Y_{1:k}^{(n)}$ be generated as in \eqref{eq:true_mechanism}. Then, for every $i=2,\ldots,k$,
\begin{equation}\label{eq:filter-tv}
d_{\mathrm{TV}}\!\big(\post_{i\mid i-1}(\cdot\mid \data{i-1}),
\trans{i}{i-1}(\cdot\mid x_{i-1}^*)\big)\ \to\ 0,
\end{equation}
$\mu_{i-1}^{(\infty)}$-almost surely as $n\to\infty$.
\end{corollary}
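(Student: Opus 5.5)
The argument splits the distance with the triangle inequality for total variation:
\[
d_{\mathrm{TV}}\!\big(\post_{i\mid i-1}(\cdot\mid \data{i-1}),\trans{i}{i-1}(\cdot\mid x_{i-1}^*)\big)
\le
d_{\mathrm{TV}}\!\big(\post_{i\mid i-1}(\cdot\mid \data{i-1}),\trans{i}{i-1}(\cdot\mid \hat x_{i-1})\big)
+
d_{\mathrm{TV}}\!\big(\trans{i}{i-1}(\cdot\mid \hat x_{i-1}),\trans{i}{i-1}(\cdot\mid x_{i-1}^*)\big).
\]
By Theorem~\ref{thm:predictive-empirical}, the first term is at most $\varepsilon_{i,n}$ almost surely, and $\varepsilon_{i,n}=L_{h_i}\big(2\theta/(n+\theta)+\sqrt{d/(n+\theta+1)}\big)\to 0$ deterministically, because $L_{h_i}$ depends only on $h_i$ and the model parameters. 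That term therefore needs nothing further.

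For the second term, note that under \eqref{eq:true_mechanism} the counts $\nn_{i-1}$ are sums of i.i.d.\ categorical indicators with mean $x_{i-1}^*$. The strong law of large numbers, applied coordinatewise, gives $\hat x_{i-1}\to x_{i-1}^*$ $\mu_{i-1}^{(\infty)}$-almost surely. The remaining task is continuity of $x\mapsto \trans{i}{i-1}(\cdot\mid x)$ in total variation.

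The main step is a Lipschitz bound in total variation, which I would obtain from the mixture representation \eqref{eq:wf-transition-compact-sec2}. Since both laws are mixtures of the same Dirichlet components $\pi_{\aa+\kk}$, the distance is bounded by half the $\ell^1$ distance between the mixing weights:
\[
d_{\mathrm{TV}}\le \tfrac12\sum_{m\ge0}p_m(h_i)\sum_{|\kk|=m}\binom{m}{\kk}\big|\hat x_{i-1}^{\kk}-(x^*_{i-1})^{\kk}\big|.
\]
For fixed $m$, the inner sum is twice the total variation distance between $\mathrm{Mult}(m,\hat x_{i-1})$ and $\mathrm{Mult}(m,x^*_{i-1})$. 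By coupling the $m$ categorical draws, this is at most $m\,\|\hat x_{i-1}-x^*_{i-1}\|_1$. Hence
\[
d_{\mathrm{TV}}\le \tfrac12\,\|\hat x_{i-1}-x^*_{i-1}\|_1\sum_m m\,p_m(h_i).
\]
Proposition~\ref{prop:coal-tail} with $n=\infty$ gives $p_m(h_i)\le e^{-cm^2h_i}$, so $\sum_m m\,p_m(h_i)<\infty$. This is presumably the content of Lemma~\ref{lem:transition-lipschitz}, and it could be cited directly.

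Combining the two bounds, on the almost-sure event where $\hat x_{i-1}\to x^*_{i-1}$, both terms vanish as $n\to\infty$, which gives \eqref{eq:filter-tv}.

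This argument needs no interiority assumption on $x^*_{i-1}$: the multinomial weights are polynomials in $x$, so boundary points cause no difficulty. It also uses no cross-time dependence, since only the marginal law $\mu_{i-1}$ enters.
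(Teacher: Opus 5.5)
Your proof is correct and uses essentially the same ingredients as the paper: the total-variation Lipschitz bound of Lemma~\ref{lem:transition-lipschitz}, which you re-derive with a constant smaller by a factor of two, together with the almost-sure convergence $\hat x_{i-1}\to x_{i-1}^*$. The only difference is packaging. The paper applies Theorem~\ref{thm:predictive-rate} directly with $x=x_{i-1}^*$, which gives the single bound $L_{h_i}\bigl(\|\hat x_{i-1}-x_{i-1}^*\|_1+2\theta/(n+\theta)+\sqrt{d/(n+\theta+1)}\bigr)$, whereas you split through $\trans{i}{i-1}(\cdot\mid \hat x_{i-1})$ with the triangle inequality.
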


\begin{proof}
Combine Theorem~\ref{thm:predictive-rate}, proved in Supplementary Material, Section~\ref{app:predictive-quant}, with the almost-sure convergence $\hat x_{i-1}\to x_{i-1}^*$ under $\mu_{i-1}^{(\infty)}$.
\end{proof}
A quantitative version of this result, with a high probability bound, is given in Theorem \ref{thm:predictive-rate} of the Supplementary Material.


\subsection{Marginal smoothing}
\label{sec:marginal-smoothing}


The next proposition is the analog of Proposition \ref{prop:predictive-target} to the case of marginal smoothing, where observations are collected at both adjacent time points.
\begin{proposition}\label{thm:bridge-main}
Let $Y_{1:k}^{(n)}$ be generated as in \eqref{eq:true_mechanism}. Then, for every $i=2,\ldots,k-1$,
\begin{equation}\label{eq:bridge-tv}
d_{\mathrm{TV}}\!\big(\post_{i\mid i-1,i+1}(\cdot\mid \data{i-1},\data{i+1}),
\bridge{i}{i-1}{i+1}(\cdot\mid x_{i-1}^*,x_{i+1}^*)\big)\ \to\ 0,
\end{equation}
$\mu_{i-1,i+1}^{(\infty)}$-almost surely as $n\to\infty$.
\end{proposition}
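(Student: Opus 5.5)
The plan is to compare the two Dirichlet mixtures directly. Both densities share the form $\sum_{\kk,\kk'} w_{\kk,\kk'}\,\pi_{\aa+\kk+\kk'}$. Since total variation is jointly convex, it suffices to control two quantities: the $\ell^1$ distance between the finite-sample weights $w^{\nn_{i-1},\nn_{i+1}}_{\kk,\kk'}(h_i,h_{i+1})$ in \eqref{smoother-weights}, and the population weights $w_{\kk,\kk'}(h_i,h_{i+1})$ in \eqref{eq:wf-bridge-weights-sec2} evaluated at $(x_{i-1}^*,x_{i+1}^*)$. Indeed,
\[
d_{\mathrm{TV}}\le \tfrac12\sum_{\kk,\kk'}\big|w^{\nn_{i-1},\nn_{i+1}}_{\kk,\kk'}-w_{\kk,\kk'}\big|.
\]
Both weight families are ratios: an unnormalized product $C_{\kk,\kk'}\,p_{\nn_{i-1},\kk}(h_i)\,p_{\nn_{i+1},\kk'}(h_{i+1})$ divided by its sum over $(\kk,\kk')$, and likewise in the population case. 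I will show that the unnormalized weights converge in $\ell^1$ and that the population normalizing constant $Z^*$ is strictly positive. The normalized weights then converge in $\ell^1$, by the elementary bound
\[
\|u/Z-v/Z^*\|_1\le 2\|u-v\|_1/Z^*.
\]
Positivity of $Z^*$ is immediate because the $(\mathbf 0,\mathbf 0)$ term equals $C_{\mathbf 0,\mathbf 0}\,p_0(h_i)\,p_0(h_{i+1})>0$.

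\emph{Step 1 (pointwise convergence).} Fix $\kk,\kk'$. On the $\mu^{(\infty)}_{i-1,i+1}$-full-measure event where $\hat x_{i-1}\to x_{i-1}^*$ and $\hat x_{i+1}\to x_{i+1}^*$ (the strong law of large numbers at each time separately, with no cross-time independence needed), I will verify
\[
p_{\nn,\kk}(h)\to p_{|\kk|}(h)\binom{|\kk|}{\kk}(x^*)^{\kk}.
\]
This uses two facts. First, $p_{n,k}(h)\to p_k(h)$ as $n\to\infty$, since the death process started from $n$ converges to the one started from infinity. Second, the multivariate hypergeometric probability $\binom{n}{k}^{-1}\binom{\nn}{\kk}$ converges to the multinomial probability $\binom{|\kk|}{\kk}\prod_r(x_r^*)^{k_r}$ when $\nn/n\to x^*$, because each factor $\binom{n_r}{k_r}/\binom{n}{k}$ is a finite product of ratios. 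The coefficient $C_{\kk,\kk'}$ does not depend on $n$.

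\emph{Step 2 (uniform tails), the main obstacle.} Pointwise convergence must be upgraded to $\ell^1$ convergence over the infinite index set, which requires domination uniform in $n$. Summing over $\kk$ with $|\kk|=k$ gives $\sum_{|\kk|=k}p_{\nn,\kk}(h)=p_{n,k}(h)$, so Proposition~\ref{prop:coal-tail} yields $\sum_{|\kk|=k}p_{\nn,\kk}(h)\le e^{-ck^2h}$ uniformly in $n$ and in the data. The difficulty is the coefficient $C_{\kk,\kk'}$, a ratio of Gamma functions that can grow with $|\kk|+|\kk'|$. I would first bound $C_{\kk,\kk'}\le \exp\{C_0(|\kk|+|\kk'|)\}$, or at worst by a factorial-type expression $\exp\{O((|\kk|+|\kk'|)\log(|\kk|+|\kk'|))\}$, using Stirling's formula on its explicit form in the Supplementary Material. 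Either growth rate is absorbed by the Gaussian-type decay $e^{-c(k^2h_i+k'^2h_{i+1})}$. This gives a summable dominating sequence in $(k,k')=(|\kk|,|\kk'|)$ that does not depend on $n$.

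\emph{Step 3 (conclusion).} Given $\epsilon>0$, choose $M$ so that the dominated tail over $\{|\kk|>M \text{ or } |\kk'|>M\}$ is below $\epsilon$ for both the finite-sample and the population unnormalized weights. On the finite set $\{|\kk|,|\kk'|\le M\}$, Step 1 makes the difference small for all large $n$. Hence the unnormalized weights converge in $\ell^1$ almost surely, and so does the normalizer $Z_n\to Z^*>0$. The normalized weights therefore converge in $\ell^1$, and the total variation bound above gives \eqref{eq:bridge-tv}.

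If the growth bound on $C_{\kk,\kk'}$ turns out weaker than expected, the fallback is to pair $C_{\kk,\kk'}$ with the Dirichlet densities. The relation $\pi_{\aa+\kk}\pi_{\aa+\kk'}\propto C_{\kk,\kk'}\pi_{\aa+\kk+\kk'}$ expresses $C_{\kk,\kk'}$ as an integral $\int\pi_{\aa+\kk}\pi_{\aa+\kk'}\,dz$. One can then bound this integral via $\sup_z\pi_{\aa+\kk}(z)$, which grows at most like $e^{O(|\kk|\log|\kk|)}$, and this is still dominated by the coalescent tail.
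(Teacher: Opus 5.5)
Your proposal is correct and is essentially the paper's proof. Both arguments truncate uniformly in the data, using Proposition~\ref{prop:coal-tail} together with a growth bound on $C_{\kk,\kk'}$ (the paper uses $C_{\kk,\kk'}\le e^{bk\log k+bk'\log k'}$, i.e.\ your factorial-type alternative). They then get convergence of the finitely many remaining weights from $p_{n,k}(t)\to p_k(t)$ and hypergeometric-to-multinomial convergence. Your treatment of the normalizer via $Z^*>0$ spells out a step the paper leaves implicit.

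Your fallback is unnecessary and, as written, slightly off. The identity behind $C_{\kk,\kk'}$ is $\pi_{\aa+\kk}\pi_{\aa+\kk'}/\pi_{\aa}\propto C_{\kk,\kk'}\,\pi_{\aa+\kk+\kk'}$, with a constant independent of $\kk,\kk'$. Also, $\sup_z\pi_{\aa+\kk}(z)=\infty$ whenever some $\alpha_j+k_j<1$. Your main Stirling-type bound already suffices, so neither point affects the proof.
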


Coherently with the previous results,  asymptotically the adjacent inspection times behave as if their endpoint frequencies were known, so the uncertainty remaining at time $i$ is exactly that of a WF bridge between $x_{i-1}^*$ and $x_{i+1}^*$. The statement extends immediately to heterogeneous sample sizes by replacing $n\to\infty$ with the local conditions $n_{i-1}\to\infty$ and $n_{i+1}\to\infty$.


In the next theorem instead we consider the full marginal smoother $\post_{i\mid 1:k}(\cdot\mid \data{1:k})$ and prove finite sample concentration in a neighborhood $\hat{x}_i$ of radius $n^{-1/2}$.

\begin{theorem}\label{cor:marginal-smoother-empirical}
For every $i=2,\dots,k-1$, there exists a constant $A_{h_i, h_{i+1}}$ such that
\begin{equation}\label{marginal-smoother-empirical}
\post_{i\mid 1:k}\!\left(\left\{z\in\Delta_d:\|z-\hat x_i\|_2\ge \frac{r}{\sqrt{n}}\right\}\mid \data{1:k}\right)
\le
\frac{A_{h_i,h_{i+1}}}{n}+\frac{16}{r^2},
\end{equation}
for every $r >0$ and for every $n > N$, with $N := N(\theta, r)$.
\end{theorem}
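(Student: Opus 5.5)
We bound the posterior mass outside the ball by Chebyshev's inequality and control the posterior second moment about $\hat x_i$. The first step is to write the full marginal smoother $\post_{i\mid 1:k}(\cdot\mid \data{1:k})$ as a finite Dirichlet mixture. Combining the forward filtering recursion (which builds on \eqref{eq:predictive-density-sec2}) with the backward information from times $i+1,\ldots,k$ (which builds on \eqref{eq:smoother-sec2}), and then updating with the multinomial likelihood at time $i$, gives
\[
\post_{i\mid 1:k}(z\mid \data{1:k})=\sum_{\kk,\kk'} W_{\kk,\kk'}\,\pi_{\aa+\nn_i+\kk+\kk'}(z).
\]
Here $W_{\kk,\kk'}$ are data-dependent probability weights. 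The vector $\kk$ is the surviving lineage count coming from the left (bounded by the left information) and $\kk'$ is the one coming from the right. The key structural fact is that the observed counts $\nn_i$ always enter the Dirichlet parameter. For each component we set $\beta=\aa+\nn_i+\kk+\kk'$ and $B=|\beta|=\theta+n+|\kk|+|\kk'|$. The component mean is then $\beta/B$, and its total variance is $\sum_r \mathrm{Var}(Z_r)=\big(1-\sum_r\beta_r^2/B^2\big)/(B+1)\le 1/(n+\theta+1)$.

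Next we bound, for each component,
\[
\mathbb E_{\pi_\beta}\|Z-\hat x_i\|_2^2\le 2\,\mathbb E\|Z-\beta/B\|^2+2\|\beta/B-\hat x_i\|^2 .
\]
The first term is at most $2/n$. For the bias, note that $\beta/B-\hat x_i=\big(\aa+\kk+\kk'-(\theta+|\kk|+|\kk'|)\hat x_i\big)/B$, so its norm is at most $2(\theta+|\kk|+|\kk'|)/(n+\theta)$. Hence
\[
\|\beta/B-\hat x_i\|^2\le 4(\theta+|\kk|+|\kk'|)^2/n^2 .
\]
Averaging over the mixture and applying Chebyshev's inequality gives
\[
\post_{i\mid 1:k}\big(\|z-\hat x_i\|\ge r/\sqrt n\big)\le \frac{n}{r^2}\Big(\frac{4}{n}+\frac{8\,\mathbb E_W(\theta+|\kk|+|\kk'|)^2}{n^2}\Big).
\]
This yields the required shape $16/r^2+A/n$, once the constants are adjusted with a slightly sharper split. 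If needed, the bias term can be absorbed into $16/r^2$ for $n>N(\theta,r)$, which explains the $N(\theta,r)$ in the statement. What remains is to show that $\mathbb E_W(|\kk|+|\kk'|)^2$ is bounded by a constant $A_{h_i,h_{i+1}}$ that is uniform in the data.

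That uniform moment bound is the main obstacle, and the idea is to use Proposition~\ref{prop:coal-tail}. The weights $W$ are proportional to products of the form $p_{\cdot,\kk}(h_i)\,p_{\cdot,\kk'}(h_{i+1})$, multiplied by Dirichlet normalising ratios of the form
\[
\frac{\Gamma\text{-ratios from }\aa+\nn_i+\kk+\kk'}{\Gamma\text{-ratios from }\aa+\nn_i}.
\]
The marginal law of $|\kk|$ is a posterior-reweighted version of the coalescent count law. The danger is that the likelihood ratio tilts the weights toward large $\kk$. To handle this, I would bound the ratio of the combining coefficient $C_{\kk,\kk'}$ (together with the update at time $i$) relative to its value at $\kk=\kk'=0$. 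A crude bound of the form $\exp\{c'(|\kk|+|\kk'|)\log(\cdot)\}$ should suffice, because it is only exponential in $|\kk|$, whereas Proposition~\ref{prop:coal-tail} gives Gaussian decay $\exp\{-c|\kk|^2h_i\}$ uniformly in $n$. The tilt can therefore be dominated, and I would first try to show the resulting tilted tail $\sum_k k^2 e^{-ck^2h+c''k}$ is finite. This gives $A_{h_i,h_{i+1}}$ depending only on the gaps (and on $\theta,\aa,d$). Extra conditioning on non-adjacent times only alters the distribution of the starting counts, not the per-count tail bound, since Proposition~\ref{prop:coal-tail} is uniform in $n\in\mathbb N\cup\{\infty\}$. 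If the tilt turns out to grow with $n$, the fallback is to show the tilt factor is monotone decreasing in $|\kk|$ after normalisation, as happens in the Dirichlet–multinomial conjugate update; in that case the untilted coalescent tail bound applies directly.
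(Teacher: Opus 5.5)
Your route works, and it differs from the paper's in two respects. However, you leave the step you flag as the main obstacle as a choice between two options, and only one of them is right, so it needs pinning down.

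\textbf{The tilt is $n$-free.} Relative to $\kk=\kk'=\mathbf 0$, merging the forward component $\pi_{\aa+\kk}$, the backward factor $\pi_{\aa+\kk'}/\pi_{\aa}$ and the likelihood $z^{\nn_i}$ produces the coefficient
\[
\frac{\mathbb E_{\pi_{\aa+\nn_i}}\big[z^{\kk+\kk'}\big]}{\mathbb E_{\pi_{\aa}}\big[z^{\kk}\big]\,\mathbb E_{\pi_{\aa}}\big[z^{\kk'}\big]}.
\]
The numerator is at most $1$ for every $\nn_i$. The reciprocal of the denominator does not depend on the data and is at most $e^{b(k\log k+k'\log k')}$, with $k=|\kk|$, $k'=|\kk'|$; in fact it is only exponential in $k+k'$. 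This is the content of Lemma~\ref{lm:ineq_C_n}.

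This $n$-independence is essential. If your unspecified $\log(\cdot)$ were $\log n$, the tilted tail would be of order $\sup_k e^{-chk^2+c'k\log n}\approx e^{(c'\log n)^2/(4ch)}$, which is unbounded in $n$.

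Your fallback names the right mechanism, but the weights are not then ``untilted''. A data-free exponential tilt remains, and it is the Gaussian decay of Proposition~\ref{prop:coal-tail} that beats it. So the correct branch is your main one, with $c''$ independent of $n$.

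\textbf{The normaliser needs a uniform lower bound.} In your forward--backward form, the $(\mathbf 0,\mathbf 0)$ weight is an average of $p_{m,0}(h_i)\,p_{m',0}(h_{i+1})$ over data-dependent starting levels. You need $p_{m,0}(h)\ge p_0(h)>0$, which holds because the absorption time of the block-counting process is stochastically increasing in $m$. With these two facts, $\sup_{\mathrm{data}}\mathbb E_W(\theta+|\kk|+|\kk'|)^2<\infty$ and your proof closes.

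\textbf{Comparison with the paper.} The paper never handles the forward--backward weights. It writes $\post_{i\mid 1:k}(\cdot\mid\data{1:k})$ as the $\post_{i-1,i+1\mid 1:k}$-average over the latent neighbours $(x,x')$ of $\tilde{\post}_{i\mid i-1,i+1}(\cdot\mid x,x',\data{i})$. It then bounds the mass by the supremum over $(x,x')$. As a result:
\begin{itemize}
\item the other inspection times drop out;
\item only the population weights $p^{x}_{\kk}(h_i)\,p^{x'}_{\kk'}(h_{i+1})$ appear;
\item the normaliser is bounded below directly by its $(\mathbf 0,\mathbf 0)$ term, a multiple of $p_0(h_i)p_0(h_{i+1})$ (Lemma~\ref{lem:smoother-remainder}).
\end{itemize}
You could import this reduction wholesale, since your Chebyshev step works for any Dirichlet-mixture representation.

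The paper then truncates at $M\asymp\sqrt{\log n}$, so the discarded weight is $O(1/n)$. It applies Chebyshev to each retained component, whose bias $O(\sqrt{\log n}/n)$ lies below $r/(2\sqrt n)$ for large $n$; this is where its threshold $N$ comes from.

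Your global bias--variance Chebyshev trades the $n$-dependent truncation for a uniform second-moment bound. The decomposition $\mathbb E\|Z-\hat x_i\|_2^2=\mathbb E\|Z-m\|_2^2+\|m-\hat x_i\|_2^2$ is exact, so no factor $2$ is needed, and you get the bound
\[
\frac{1}{r^2}\Big(1+\frac{4\sup\mathbb E_W(\theta+|\kk|+|\kk'|)^2}{n}\Big).
\]
This gives the statement for every $n$, with no threshold. The statement is trivial for $r\le 4$, and for $r>4$ the second term is at most $\sup\mathbb E_W(\theta+|\kk|+|\kk'|)^2/(4n)$. What the paper's version buys in exchange is a uniform tail lemma that it reuses unchanged in the consistency and Bernstein--von Mises arguments.
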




The bound in \eqref{marginal-smoother-empirical} contains two elements: a truncation error of order $n^{-1}$, controlling the contribution of large coalescent counts,  and a concentration term coming from the Dirichlet component.  Given the convergence of $\hat{x}_i$ to $x_i^*$, Theorem \ref{cor:marginal-smoother-empirical} implies convergence of $\post_{i\mid 1:k}(\cdot\mid \data{1:k})$ to $\delta_{x_i^*}(\cdot)$: this is proved in Section \ref{sec:consistency} in much greater generality.



\subsection{Stability under additional conditioning}
\label{sec:extended-locality}

We record a secondary but structurally useful result, which states that enlarging the conditioning set beyond the neighboring inspection times does not change the first-order local targets. In particular, the conditional law of $X_i$ given all past data still has $\trans{i}{i-1}(\cdot\mid x_{i-1}^*)$ as its asymptotic target, and the conditional law of $X_i$ given all data except at time $i$ still has the local bridge $\bridge{i}{i-1}{i+1}(\cdot\mid x_{i-1}^*,x_{i+1}^*)$ as its asymptotic target. Its proof is deferred to Supplementary Material, Section~\ref{app:extended-locality}.

\begin{proposition}\label{cor:extended-locality}
For every $i=2,\ldots,k$, let $\post_{i\mid 1:i-1}(\cdot\mid \data{1:i-1})$ denote the conditional law of $X_i$ given all past data. Then
\begin{equation}\label{eq:filter-fullpast-tv}
d_{\mathrm{TV}}\!\big(\post_{i\mid 1:i-1}(\cdot\mid \data{1:i-1}),
\trans{i}{i-1}(\cdot\mid x_{i-1}^*)\big)\to 0,
\end{equation}
$\mu_{1:i-1}^{(\infty)}$-almost surely as $n\to\infty$.

Moreover, for every $i=2,\ldots,k-1$, let $\post_{i\mid 1:i-1,i+1:k}(\cdot\mid \data{1:i-1},\data{i+1:k})$ denote the conditional law of $X_i$ given all data except at time $i$. Then
\begin{equation}\label{eq:bridge-allother-tv}
d_{\mathrm{TV}}\!\big(\post_{i\mid 1:i-1,i+1:k}(\cdot\mid \data{1:i-1},\data{i+1:k}),
\bridge{i}{i-1}{i+1}(\cdot\mid x_{i-1}^*,x_{i+1}^*)\big)\to 0,
\end{equation}
$\mu_{1:i-1,i+1:k}^{(\infty)}$-almost surely as $n\to\infty$.
\end{proposition}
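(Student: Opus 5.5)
The plan is to exploit the Markov structure of the hidden chain. Given $X_{i-1}$, the state $X_i$ is conditionally independent of $\data{1:i-1}$. Hence
\[
\post_{i\mid 1:i-1}(x\mid \data{1:i-1})=\int_{\Delta_d}\trans{i}{i-1}(x\mid z)\,\post_{i-1\mid 1:i-1}(\de z\mid \data{1:i-1}),
\]
that is, the filter at time $i-1$ propagated by the WF transition. By convexity of total variation,
\[
d_{\mathrm{TV}}\!\big(\post_{i\mid 1:i-1},\trans{i}{i-1}(\cdot\mid x^*_{i-1})\big)\le \int d_{\mathrm{TV}}\!\big(\trans{i}{i-1}(\cdot\mid z),\trans{i}{i-1}(\cdot\mid x^*_{i-1})\big)\,\post_{i-1\mid 1:i-1}(\de z\mid\data{1:i-1}).
\]
The Lipschitz continuity of $z\mapsto \trans{i}{i-1}(\cdot\mid z)$ in total variation (Lemma \ref{lem:transition-lipschitz}, constant $L_{h_i}$, which uses Proposition \ref{prop:coal-tail} to truncate the mixture) bounds the integrand by $\min\{1,L_{h_i}\|z-x^*_{i-1}\|\}$. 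It therefore suffices to show that the filter $\post_{i-1\mid 1:i-1}$ concentrates at $x^*_{i-1}$ almost surely. The same reduction handles the bridge statement. Given $(X_{i-1},X_{i+1})$, the state $X_i$ is independent of the remaining data, so $\post_{i\mid 1:i-1,i+1:k}$ is the mixture of $\bridge{i}{i-1}{i+1}(\cdot\mid z,z')$ against the joint posterior of $(X_{i-1},X_{i+1})$ given all data except time $i$. Here I would need continuity of the bridge law in $(z,z')$ at $(x^*_{i-1},x^*_{i+1})$. I would obtain it from the mixture form \eqref{eq:wf-bridge-mixture-sec2}, truncating at $|\kk|,|\kk'|\le M$ uniformly via Proposition \ref{prop:coal-tail}. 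The finitely many remaining weights are continuous in the endpoints, and the normalising constant stays bounded away from zero because $C_{\kk,\kk'}$ and $p_0$ are positive.

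The key step is concentration of the posterior of $X_{i-1}$ (respectively of $(X_{i-1},X_{i+1})$) given the enlarged data set. I would write this posterior as the prior marginal times the likelihood $x^{\nn_{i-1}}$ times a factor $g(x)$. Here $g$ is the conditional likelihood of the other time points given $X_{i-1}=x$, obtained by integrating over the other states. The factor $g$ is arbitrary and data dependent, and it need not be controlled probabilistically, since the data are only assumed i.i.d.\ within times. Its role is nonetheless limited, because it is a finite mixture of polynomials $x^{\kk}$ with degree bounded by $n$ per neighbouring time, propagated through dual death processes. The approach is to show that, after propagation over a gap $h>0$, the dependence of $g$ on $x$ is a mixture over $\kk$ whose law of $|\kk|$ has Gaussian tails, uniformly in $n$, by Proposition \ref{prop:coal-tail}. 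Multiplying by $x^{\kk}$ with $|\kk|$ typically $O(1)$ shifts the Dirichlet$(\aa+\nn_{i-1}+\kk)$ posterior only negligibly. The components with large $|\kk|$ carry weight at most $e^{-cM^2h}$, even after reweighting by the marginal likelihood ratio. That ratio is bounded because multiplying the Dirichlet with parameter $\aa+\nn$ by $x^{\kk}$ changes the normaliser by a factor at least of order $n^{-|\kk|}$, which is beaten by $e^{-c|\kk|^2 h}$ once $|\kk|\gtrsim \log n$.

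Concretely, I would prove that the posterior of $X_{i-1}$ given all data is a Dirichlet mixture $\sum_{\kk}\omega_{\kk}\pi_{\aa+\nn_{i-1}+\kk}$ with $\sum_{|\kk|>M}\omega_\kk\to0$ for suitable $M=M_n=o(n)$. This is essentially the argument behind Theorem \ref{cor:marginal-smoother-empirical}, applied to time $i-1$ instead of $i$, with the one-sided or two-sided conditioning handled identically. Each component with $|\kk|\le M_n$ has mean within $O(M_n/n)$ of $\hat x_{i-1}$ and variance $O(1/n)$. Combining this with $\hat x_{i-1}\to x^*_{i-1}$ almost surely under $\mu^{(\infty)}_{i-1}$ (strong law of large numbers, valid regardless of cross-time dependence) yields concentration. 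For the bridge, apply this to both $X_{i-1}$ and $X_{i+1}$ and use a union bound.

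The main obstacle is the weight-control step. One must bound the mixing weights $\omega_\kk$ uniformly over arbitrary data at the other times, despite the ratio of Dirichlet normalising constants. If the direct ratio argument fails, I would fall back on reusing the bound \eqref{marginal-smoother-empirical} directly, extended to endpoint indices, which already gives concentration of the full-data marginal. For the past-only and all-but-$i$ conditionings, I would rederive that bound after deleting the observation factors that are absent; its proof only uses the Dirichlet structure at time $i-1$ together with the coalescent tail of Proposition \ref{prop:coal-tail}.
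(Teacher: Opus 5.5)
Your proposal is correct and shares the paper's skeleton. The three ingredients are the Markov mixture representation over the posterior of $X_{i-1}$ (resp.\ of $(X_{i-1},X_{i+1})$), total-variation continuity of the transition and bridge kernels via uniform truncation, and concentration of the mixing posterior. For the continuity step the paper uses Lemma~\ref{lm:continuity_TV}. Your Lipschitz bound from Lemma~\ref{lem:transition-lipschitz} is a harmless strengthening. For the bridge, however, uniform truncation also needs the growth bound on $C_{\kk,\kk'}$ from Lemma~\ref{lm:ineq_C}, not only Proposition~\ref{prop:coal-tail}.

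The two routes differ only in the concentration step. The paper takes it from the consistency part of Theorem~\ref{thm:consistency-bvm}. That proof conditions each state on its \emph{own} latent neighbours rather than integrating them out. The data at all other times then drop out by the Markov property. Lemma~\ref{lem:smoother-remainder} controls the Dirichlet-mixture weights uniformly in the neighbouring states. Averaging over any law of those neighbours, however data-dependent, preserves the bound (Proposition~\ref{prop:convergence}). This is essentially your fallback, and it removes what you call the main obstacle: your integrated weights $\omega_{\kk}$ are just averages of the paper's $\tilde w_{\nn,\kk,\kk'}$.

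Your direct route also works, and more easily than you suggest. Work relative to the $\kk=0$ component. The time-$(i-1)$ sample reweights component $\kk$ by $\int_{\Delta_d}x^{\kk}\pi_{\aa+\nn_{i-1}}(x)\,\de x\le 1$. The data at other times multiply the coalescent weight $p_{|\kk|}(h)\binom{|\kk|}{\kk}$ by at most $\bigl(\int_{\Delta_d}x^{\kk}\pi_{\aa}(x)\,\de x\bigr)^{-1}=e^{O(|\kk|\log|\kk|)}$ (cf.\ Lemma~\ref{lm:ineq_C_n}). Hence a fixed truncation level $M$ works uniformly in $n$ and in the data. The $n^{-|\kk|}$ estimate and the growing level $M_n\gtrsim\log n$ are unnecessary, though harmless.
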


\subsection{Joint smoothing}
\label{sec:consistency}

We now turn to the joint posterior law of the inspection-time states $X_{1:k}$. At finite $n$, this law is built by combining information from earlier and later inspection times through forward--backward Dirichlet-mixture recursions. In particular, each inspection-time posterior remains a finite Dirichlet mixture, and the dependence across times enters only through the corresponding coalescent-based mixture weights; see Supplementary Material, Section~\ref{app:bvm-aux}, for the explicit recursions.

The main joint result identifies the asymptotic behaviour of this posterior law at the inspection times. We retain a separate consistency statement because it remains valid without any interiority assumption and is proved in the stronger almost-sure form. The asymptotic fluctuations are shown to depend on the support of the true frequency vectors $x_i^*$. Coordinates corresponding to positive true frequencies satisfy the same Gaussian Bernstein--von Mises behaviour of the static Dirichlet-Multinomial model, whereas coordinates corresponding to zero true frequencies fluctuate on the smaller $n^{-1}$ scale and converge to Gamma limits. We write ``$\Rightarrow$'' for convergence in distribution.

\begin{theorem}\label{thm:consistency-bvm}
Let $Y_{1:k}^{(n)}$ be generated as in \eqref{eq:true_mechanism}. Then, as $n\to\infty$,
\begin{equation}\label{joint-consistency}
\post_{1:k \mid 1:k}(\cdot \mid \data{1:k})\ \Rightarrow\
\delta_{x_1^*}(\cdot)\times\cdots\times\delta_{x_k^*}(\cdot),
\end{equation}
$\mu_{1:k}^{(\infty)}$-almost surely.
Moreover, suppose, without loss of generality, that for each $i=1,\ldots,k$, for some $1\le r_i\le d$,
\[
x_i^*
=
(x_{i,1}^*,\ldots,x_{i,r_i}^*,0,\ldots,0),
\qquad
x_{i,j}^*>0,\quad j=1,\ldots,r_i.
\]
Write $X_i=(X_i^+,X_i^0)$, where $X_i^+=(X_{i,1},\ldots,X_{i,r_i})$ and $X_i^0=(X_{i,r_i+1},\ldots,X_{i,d})$, and define similarly $\hat x_i=(\hat x_i^+,\hat x_i^0)$. Let
\[
\Sigma_i^+=\operatorname{diag}(x_i^{*+})-x_i^{*+}x_i^{*+\top},
\qquad
\Sigma^+=\operatorname{diag}(\Sigma_1^+,\ldots,\Sigma_k^+).
\]
Then, as $n\to\infty$, in $\mu_{1:k}^{(\infty)}$-probability,
\begin{equation}\label{active-BvM}
\mathcal L\big((\sqrt n(X_i^+-\hat x_i^+))_{i=1}^k
\mid \data{1:k}\big)
\Rightarrow
\mathcal N(\cdot;0,\Sigma^+)
\end{equation}
and
\begin{equation}\label{boundary-gamma}
\mathcal L\big((nX_i^0)_{i=1}^k\mid \data{1:k}\big)
\Rightarrow
\bigotimes_{i=1}^k
\bigotimes_{j=r_i+1}^{d}
\Gamma(\alpha_j,1),
\end{equation}
with the convention that the inner product for each $i$ is a point mass at the empty vector when $r_i=d$. In fact, the convergences in \eqref{active-BvM} and \eqref{boundary-gamma} hold jointly, with limiting law equal to the product of the laws in \eqref{active-BvM} and \eqref{boundary-gamma}. If $r_i=d$ for all $i=1,\dots,k$, then \eqref{active-BvM} holds also in total variation.
\end{theorem}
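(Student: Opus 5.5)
The plan is to write the joint smoother as an explicit finite Dirichlet mixture and show that the mixing weights put almost all mass on configurations whose latent counts are $O(1)$, uniformly in the data. By the forward--backward recursions, $\post_{1:k\mid 1:k}(\cdot\mid\data{1:k})$ has density proportional to
\[
\pi_{\aa}(x_1)\prod_{i=2}^k p_{h_i}(x_{i-1},x_i)\prod_{i=1}^k x_i^{\nn_i}.
\]
Substituting the mixture \eqref{eq:wf-transition-compact-sec2} for each transition and expanding gives a mixture over count vectors $(\kk_2,\ldots,\kk_k)$, where $\kk_i$ enters through $x_{i-1}^{\kk_i}$ and through $\pi_{\aa+\kk_i}(x_i)$. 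Conditionally on these counts, the coordinates $X_1,\ldots,X_k$ are independent, with $X_i\sim\mathrm{Dir}(\aa+\nn_i+\kk_i+\kk_{i+1})$ (setting $\kk_1=\kk_{k+1}=0$). The weight of a configuration is proportional to
\[
\prod_i p_{|\kk_i|}(h_i)\binom{|\kk_i|}{\kk_i}\frac{B(\aa+\nn_i+\kk_i+\kk_{i+1})}{B(\aa+\kk_i)},
\]
with $B$ the multivariate Beta function.

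\textbf{Step 1: uniform tightness of the counts.} I would show that, uniformly in $\nn_{1:k}$, the posterior law of $\sum_i|\kk_i|$ is tight. Equivalently, I would show that the ratio of the weight of a configuration to the weight at $\kk\equiv0$ is bounded by $C^{|\kk|}\,p_{|\kk|}(h)$ times polynomial factors. The Beta ratios give terms like $\prod_r(\alpha_r+n_{i,r})^{k_r}/(\theta+n)^{|\kk|}$, which are at most $1$ up to the factor $(\theta+n+|\kk|)$ corrections. The factors $1/B(\aa+\kk_i)$ grow at most like $\Gamma(\theta+|\kk|)/\prod\Gamma(\alpha_r+k_r)\le C^{|\kk|}|\kk|!$-type bounds. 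Proposition~\ref{prop:coal-tail} gives $p_k(h)\le e^{-ck^2h}$, which beats any $C^kk!$ growth, so the tail $\sum_{|\kk|>M}$ of the normalized weights tends to $0$ uniformly in the data as $M\to\infty$. \emph{This is the main obstacle}: the bound must be independent of $\nn$ and of the cross-time dependence. The Gaussian tail $e^{-cM^2h}$ is exactly what absorbs the factorial from $1/B(\aa+\kk)$, and I would first try bounding the normalizer from below by the $\kk=0$ term.

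\textbf{Step 2: reduction to fixed-count Dirichlets.} On the event $\{\sum|\kk_i|\le M\}$, each $X_i$ is Dirichlet with parameter $\aa+\nn_i+\mathbf m_i$, where $|\mathbf m_i|\le 2M$ and $X_1,\ldots,X_k$ are conditionally independent. Since $\hat x_i\to x_i^*$ almost surely for each $i$ (a statement only about marginals, hence valid under $\mu^{(\infty)}_{1:k}$), Dirichlet mean and variance bounds give \eqref{joint-consistency}. For the fluctuations I would use the Gamma representation $X_{i,j}=G_j/\sum_lG_l$ with $G_j\sim\Gamma(\alpha_j+n_{i,j}+m_{i,j},1)$. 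For active coordinates, $n_{i,j}\asymp n$, and the usual delta method gives $\sqrt n(X_i^+-\hat x_i^+)\Rightarrow\mathcal N(0,\Sigma_i^+)$; the shift $\mathbf m_i/\sqrt n\to0$ is harmless. For zero coordinates, $n_{i,j}$ is $0$ eventually almost surely, and $\sum_lG_l/n\to1$, so $nX_{i,j}\Rightarrow\Gamma(\alpha_j+m_{i,j},1)$.

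\textbf{Step 3: removing the boundary shift.} The shift $m_{i,j}$ in the Gamma limit does not disappear automatically, so I need to show that the posterior mass of configurations with $m_{i,j}>0$ at a zero coordinate vanishes. Such configurations carry a factor $(\alpha_j+n_{i,j})_{m}/(\theta+n)_{m}=O(n^{-1})$ in the Beta ratio relative to $\kk=0$, so their weight is $O(n^{-1})$. This yields the $\Gamma(\alpha_j,1)$ limit in \eqref{boundary-gamma}. Conditional independence across $i$ and across the active/zero blocks then gives joint convergence to the product law, with the probability statement ``in $\mu^{(\infty)}$-probability'' arising from replacing $\hat x_i$ by $x_i^*$ in $\Sigma$. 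When all $r_i=d$, total variation follows because, for fixed $\mathbf m$, the Dirichlet-to-Gaussian BvM holds in total variation. The mixture over the finitely many configurations with $\sum|\kk_i|\le M$ is then a convex combination of laws each within $o(1)$ of the same Gaussian, and the remaining tail mass is controlled uniformly by Step 1.
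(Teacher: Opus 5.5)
Your proposal is correct, but it organises the argument differently from the paper. You expand the \emph{whole} joint smoother over the path of dual counts $(\kk_2,\ldots,\kk_k)$. This turns $\post_{1:k\mid 1:k}(\cdot\mid\data{1:k})$ into a mixture of products of independent Dirichlet laws $\pi_{\aa+\nn_i+\kk_i+\kk_{i+1}}$, and you then prove tightness of the count path uniformly in $\nn_{1:k}$.

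The paper never writes the joint law globally. It works with the local law $\tilde{\post}_{i\mid i-1,i+1}(\cdot\mid x,x',\data{i})$ of $X_i$ given the neighbouring \emph{latent states} and the time-$i$ sample. It proves the truncation bound uniformly in $(x,x')$ and $\nn$ (Lemma~\ref{lem:smoother-remainder}). Consistency then follows by mixing over the posterior of $(X_{i-1},X_{i+1})$ and taking a supremum. For the Gaussian and Gamma limits it uses the chain-rule Lemma~\ref{lm:decomposition_TV}. This reduces joint convergence to convergence of the law of $X_i$ given $X_{i-1}$, uniformly in $x_{i-1}$, which is controlled by convexity through the supremum over $(x_{i-1},x_{i+1})$.

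The analytic core is the same in both routes:
\begin{itemize}
\item Proposition~\ref{prop:coal-tail} absorbs the $\Gamma(\theta+k)$ growth of $1/B(\aa+\kk)$;
\item the normaliser is bounded below by the zero-count configuration;
\item the Dirichlet is handled through its Gamma representation;
\item weights carrying a shift on a zero coordinate decay polynomially in $n$.
\end{itemize}
Your route makes the block-diagonal covariance and the product limit across inspection times immediate, by conditional independence given the counts. It needs neither the chain-rule lemma nor suprema over latent states. The paper's uniform-in-neighbours local bounds are more modular, and they are reused for Theorem~\ref{cor:marginal-smoother-empirical} and Proposition~\ref{cor:extended-locality}.

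A few points to tighten:
\begin{itemize}
\item In Step~1 the data-dependent factor is exactly $B(\aa+\nn_i+\mathbf m)/B(\aa+\nn_i)=\mathbb{E}_{\pi_{\aa+\nn_i}}[X^{\mathbf m}]\le 1$. It is a ratio of rising factorials, not powers, so no ``corrections'' are needed.
\item In Step~3, to isolate the zero-coordinate factor, write $(\theta+n)_{|\mathbf m|}=(\theta+n)_{m^+}(\theta+n+m^+)_{m^0}$, with $m^+$ and $m^0$ the total shift on active and on zero coordinates. Then bound the active rising factorials by the first factor.
\item The active and zero blocks of a single Dirichlet are not conditionally independent given the counts. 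Their asymptotic independence comes from the common normaliser $\sum_l G_l/n\to 1$ in the Gamma representation, as in Lemma~\ref{lem:boundary-dirichlet}.
\item Every step of your argument is deterministic on the almost-sure event where $\hat x_i\to x_i^*$ for all $i$ and $n_{i,j}=0$ on zero coordinates. So the weak limits actually hold almost surely, which is stronger than the in-probability statement.
\end{itemize}
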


A key feature of Theorem~\ref{thm:consistency-bvm} is that it imposes no assumption on dependence across distinct inspection times. The only sampling assumption is within-time independence, so the conclusion applies even when the observations are longitudinally dependent across inspection times. The theorem shows that, after integrating over the full forward--backward HMM structure, the exact joint posterior behaves to first order as if each inspection time were estimated separately from its own multinomial sample. For coordinates corresponding to positive true frequencies, this yields a block-diagonal Gaussian limit, so posterior correlations across distinct inspection times are asymptotically negligible. For coordinates corresponding to zero true frequencies, the posterior fluctuations occur on the smaller $n^{-1}$ scale rather than the usual $n^{-1/2}$ scale, and the corresponding rescaled coordinates converge to Gamma laws with shape parameters determined by the mutation parameters $\alpha_j$. Thus the Gaussian approximation survives on the active face of the simplex, while the inactive coordinates exhibit a distinct boundary asymptotic regime. In the interior case $r_i=d$ for every $i$, the inactive block is empty and Theorem~\ref{thm:consistency-bvm} recovers the standard Gaussian Bernstein--von Mises approximation of the static Dirichlet-Multinomial model.

The proof has three steps. First, one rewrites the relevant local conditional laws as Dirichlet mixtures indexed by neighbouring latent states and proves a uniform truncation bound for their mixture weights. Second, one combines this truncation with the usual concentration of Dirichlet laws to obtain local conditional concentration around the inspection-time target frequencies, which yields \eqref{joint-consistency}. Third, one studies the asymptotic behaviour of the Dirichlet components themselves. For coordinates corresponding to positive true frequencies, the usual finite-dimensional Bernstein--von Mises theorem for regular statistical models yields the Gaussian approximation \eqref{active-BvM}. For coordinates corresponding to zero true frequencies, the appropriate scaling is $n$ rather than $\sqrt n$, and the corresponding posterior fluctuations converge to Gamma limits, giving \eqref{boundary-gamma}. The detailed intermediate statements and proofs are collected in Supplementary Material, Section~\ref{app:bvm-aux}.


\section{Numerical illustration}\label{sec:numerics}

We give a brief numerical illustration of the results, focusing on three representative displays, and defer further experiments to the Supplementary Material. These include the Gaussian approximation at inspection times, a three-type boundary regime, and additional parameter sensitivity checks.

We first consider a two-type Wright--Fisher model with symmetric parent-independent mutation parameter $\aa=(2,2)$, which provides a simple interior baseline, a near-boundary true frequency $x^*=0.05$, and inspection gap $h=0.01$. The Supplementary Material records analogous experiments for other true frequencies, other inspection gaps, and additional mutation levels. For each $n\in\{20,50,200\}$, a single dataset is generated under multinomial sampling at the previous inspection time, the empirical proportion is denoted by $\hat x$, and the exact predictive law $\post_{i\mid i-1}(\cdot\mid \data{i-1})$ is compared with the transition law $\trans{i}{i-1}(\cdot\mid \hat x)$ started at the empirical composition. This is the numerical experiment naturally associated with Theorem~\ref{thm:predictive-empirical}.

The predictive curves are obtained by direct evaluation of the exact Beta-mixture formulas. Figure~\ref{fig:predictive-transition-harsh} shows that the discrepancy is clearly visible at small and moderate sample sizes and then decreases steadily as $n$ grows.

\begin{figure}[t!]
\centering
\includegraphics[width=\textwidth]{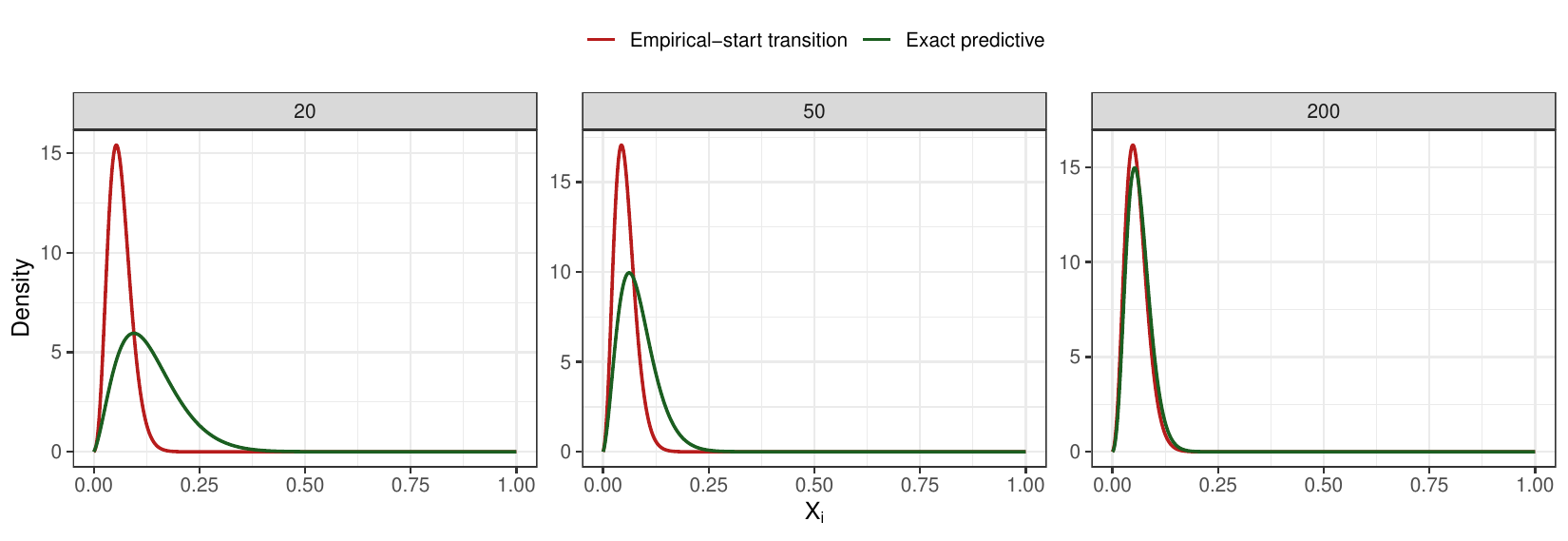}
\caption{Two-type predictive approximation with $\aa=(2,2)$, $x^*=0.05$, $h=0.01$, and $n\in\{20,50,200\}$. The three panels show the exact predictive density $\post_{i\mid i-1}(\cdot\mid \data{i-1})$ and the Wright--Fisher transition density $\trans{i}{i-1}(\cdot\mid \hat x_{i-1})$ started at the empirical composition.}
\label{fig:predictive-transition-harsh}
\end{figure}

We next turn to local smoothing. At an unobserved intermediate time between two neighboring inspection times, the exact local smoother $\post_{i\mid i-1,i+1}(\cdot\mid \data{i-1},\data{i+1})$ is compared with the Wright--Fisher bridge $\bridge{i}{i-1}{i+1}(\cdot\mid \hat x_{i-1},\hat x_{i+1})$ obtained by plugging the empirical endpoint compositions into the bridge law. This is the comparison associated with Proposition~\ref{thm:bridge-main}. Figure~\ref{fig:local-smoother-bridge-main} shows that, in the same repeated-sampling regime, the exact local smoother is already well captured by the empirical-endpoint bridge.

\begin{figure}[t!]
\centering
\includegraphics[width=\textwidth]{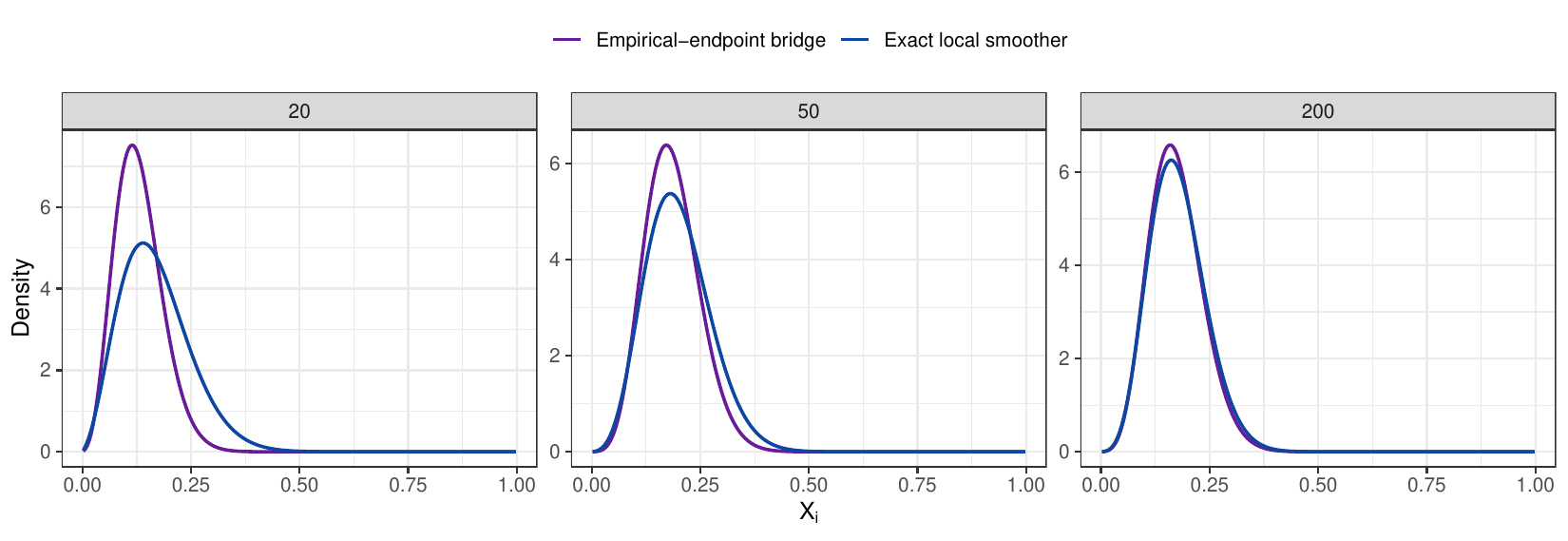}
\caption{Two-type local smoothing comparison at an unobserved intermediate time, with the middle state unobserved and $n\in\{20,50,200\}$. The three panels compare the exact local smoother $\post_{i\mid i-1,i+1}(\cdot\mid \data{i-1},\data{i+1})$ with the empirical-endpoint Wright--Fisher bridge $\bridge{i}{i-1}{i+1}(\cdot\mid \hat x_{i-1},\hat x_{i+1})$.}
\label{fig:local-smoother-bridge-main}
\end{figure}

To complement these law-level displays, we examine the sample-size dependence in Theorem~\ref{thm:predictive-empirical}. Figure~\ref{fig:rate-behavior-main} compares the empirical maximum predictive total-variation discrepancy with the $n$-dependent factor in the bound across the true compositions $x^*\in\{0.01,0.15,0.50\}$ at the representative gap $h=0.10$. Corresponding displays for the shorter and longer gaps $h\in\{0.05,0.50\}$ are reported in the Supplementary Material. The left panel shows that the empirical predictive discrepancy decreases with the same sample-size scale as the $n$-dependent factor in the theorem, while the right panel records the corresponding behavior for the Simpson-diversity functional \eqref{diversity index}.

\begin{figure}[t!]
\centering
\includegraphics[width=\textwidth]{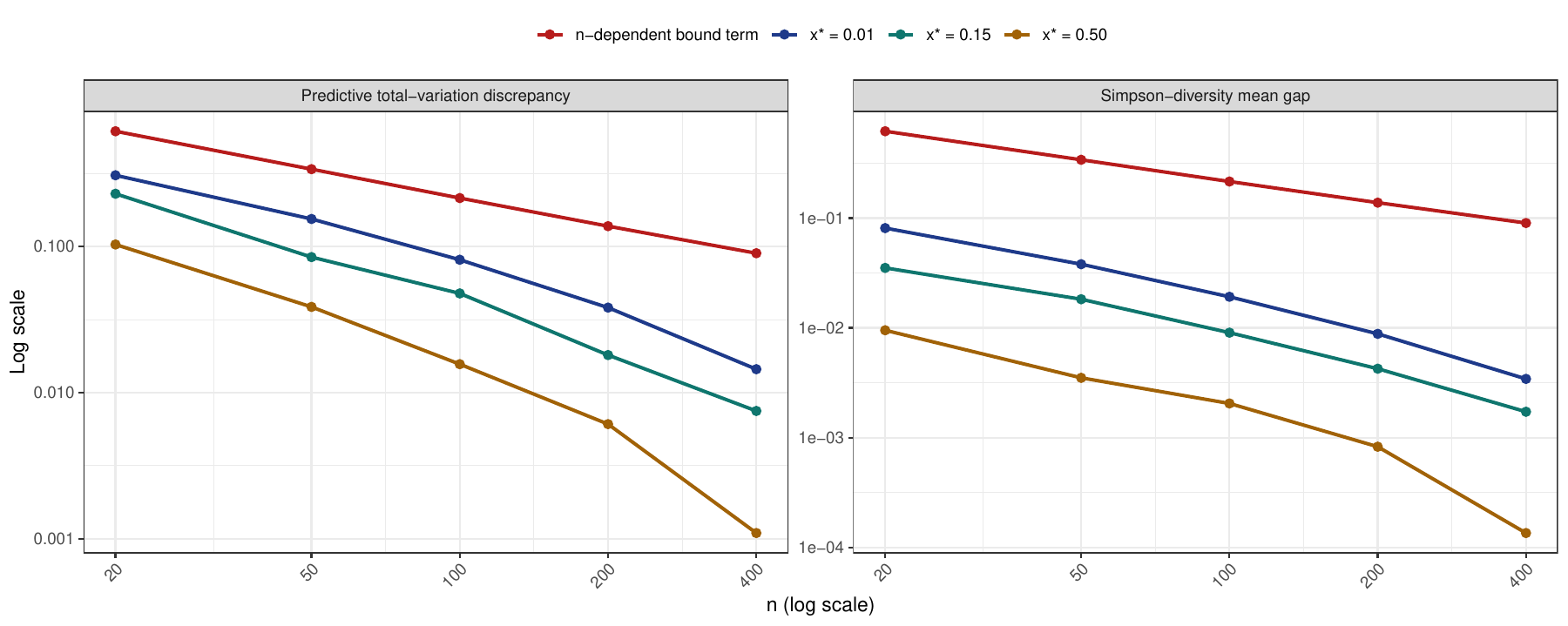}
\caption{Exact two-type predictive rate comparison across the true compositions $x^*\in\{0.01,0.15,0.50\}$, with $\aa=(2,2)$, $h=0.10$, and $n\in\{20,50,100,200,400\}$. Left: empirical maximum predictive total-variation discrepancy and the $n$-dependent factor in Theorem~\ref{thm:predictive-empirical}. Right: average Simpson-diversity mean gap and the same reference term.}
\label{fig:rate-behavior-main}
\end{figure}


\section{Concluding remarks}\label{sec:conclusion}

We studied exact prediction and smoothing in WF--HMMs under a repeated-sampling regime with fixed inspection times and growing within-time sample sizes. In this setting, the predictive and local smoothing laws converge to the corresponding WF transition and bridge laws, and the full joint posterior law at the inspection times concentrates at the target frequencies with a boundary-aware Bernstein--von Mises limit. We also obtained explicit finite-sample control for the predictive law and a corresponding concentration bound for the marginal smoother.

These results clarify the role of temporal borrowing in the model. At finite sample size, information is propagated across inspection times through the coalescent-based smoothing recursions. In the present asymptotic regime, however, the dominant contribution comes from the multinomial sampling at each inspection time, while the induced longitudinal dependence does not affect the first-order inspection-time uncertainty.

Several questions remain open. The present regime is tailored to data sets with a fixed and relatively small number of inspection times and substantial within-time information; it does not address growing time grids, sparse counts, or more irregular observation schemes. Parameter learning is also outside the scope of the present paper: extending these arguments to mutation, selection, or other structural parameters would require combining the repeated-sampling regime studied here with an additional asymptotic framework and understanding how that interacts with the coalescent structure behind the exact conditional laws. More broadly, it would be of interest to understand how much of this asymptotic picture persists for related duality-based or quasi-conjugate models, including coupled WF diffusions \citep{boetti_2024}, Poisson--Dirichlet hidden Markov models \citep{dallapria2026}, approximate filters based on discrete dual processes \citep{KKK24}, and nonparametric measure-valued signal models such as the Fleming--Viot settings studied in \citet{ascolani_predictive_2021,ascolani_smoothing_2023}.


\section*{Supplementary Material}

The Supplementary Material contains the proofs of all results, together with several auxiliary lemmas and additional numerical illustrations.

%


\bibliographystyle{apalike}
\bibliography{bib}

@article{Maruotti2011,
  author  = {Maruotti, Antonello},
  title   = {Mixed Hidden {Markov} Models for Longitudinal Data: An Overview},
  journal = {International Statistical Review},
  year    = {2011},
  volume  = {79},
  number  = {3},
  pages   = {427--454},
  doi     = {10.1111/j.1751-5823.2011.00160.x}
}

@article{Schweinsberg2000,
  author  = {Schweinsberg, Jason},
  title   = {A necessary and sufficient condition for $\Lambda$-coalescents to come down from infinity},
  journal = {Electron. Commun. Probab.},
  year    = {2000},
  volume  = {5},
  pages   = {1--11},
  doi     = {10.1214/ECP.v5-1013}
}

@article{BerestyckiBerestyckiLimic2010,
  author  = {Berestycki, Julien and Berestycki, Nathana{\"e}l and Limic, Vlada},
  title   = {The $\Lambda$-coalescent speed of coming down from infinity},
  journal = {Annals of Probability},
  year    = {2010},
  volume  = {38},
  number  = {1},
  pages   = {207--233},
  doi     = {10.1214/09-AOP475}
}

@article{BansayeMeleardRichard2016,
  author  = {Bansaye, Vincent and M{\'e}l{\'e}ard, Sylvie and Richard, Mathieu},
  title   = {Speed of coming down from infinity for birth-and-death processes},
  journal = {Advances in Applied Probability},
  year    = {2016},
  volume  = {48},
  number  = {4},
  pages   = {1183--1210},
  doi     = {10.1017/apr.2016.70}
}

@article{SagitovFrance2017,
  author  = {Sagitov, Serik and France, Thibaut},
  title   = {Limit theorems for pure death processes coming down from infinity},
  journal = {Journal of Applied Probability},
  year    = {2017},
  volume  = {54},
  number  = {3},
  pages   = {720--731},
  doi     = {10.1017/jpr.2017.30}
}

@article{Depperschmidt2015,
  author  = {Depperschmidt, Andrej and Pfaffelhuber, Peter and Scheuringer, Annika},
  title   = {Some large deviations in Kingman's coalescent},
  journal = {Electronic Communications in Probability},
  year    = {2015},
  volume  = {20},
  number  = {7},
  pages   = {1--14},
  doi     = {10.1214/ECP.v20-3107}
}

@techreport{Dawson_SPS_TR451,
  author      = {Dawson, Donald A.},
  title       = {Introductory Lectures on Stochastic Population Systems},
  institution = {Laboratory for Research in Statistics and Probability, School of Mathematics and Statistics, Carleton University},
  address     = {Ottawa, Canada},
  type        = {Technical Report},
  series      = {Technical Report Series},
  number      = {451},
  year        = {2017},
  note        = {https://arxiv.org/abs/1705.03781}
}

@article{sant2023ewf,
  title={{EWF: simulating exact paths of the Wright--Fisher diffusion}},
  author={Sant, Jaromir and Jenkins, Paul A and Koskela, Jere and Span{\`o}, Dario},
  journal={Bioinformatics},
  volume={39},
  number={1},
  pages={btad017},
  year={2023},
  publisher={Oxford University Press}
}

@article{Malaspinas2012,
  author={Malaspinas, Anna-Sapfo and Malaspinas, Orestis and Evans, Steven N and Slatkin, Montgomery},
  title={Estimating allele age and selection coefficient from time-serial data},
  journal={Genetics},
  volume={192},
  number={2},
  pages={599--607},
  year={2012},
  doi={10.1534/genetics.112.140939}
}

@article{Fages2019,
  author={Fages, Antoine and Hangh{\o}j, Kristian and Khan, Naveed and Gaunitz, Charleen and Seguin-Orlando, Andaine and Leonardi, Michela and Constantz, Christian McCrory and Gamba, Cristina and Al-Rasheid, Khaled A. S. and Albizuri, Silvia and others},
  title={Tracking five millennia of horse management with extensive ancient genome time series},
  journal={Cell},
  volume={177},
  number={6},
  pages={1419--1435.e31},
  year={2019},
  doi={10.1016/j.cell.2019.03.049}
}

@article{Wutke2016,
  author={Wutke, Saskia and Benecke, Norbert and Sandoval-Castellanos, Edson and D{\"o}hle, Hans-J{\"u}rgen and Friederich, Susanne and Gonzalez, Javier and Hallsson, J{\'o}n Hallsteinn and Hofreiter, Michael and L{\~o}ugas, Lembi and Magnell, Ola and others},
  title={Spotted phenotypes in horses lost attractiveness in the {Middle Ages}},
  journal={Scientific Reports},
  volume={6},
  pages={38548},
  year={2016},
  doi={10.1038/srep38548}
}

@article{jenkins2017exact,
  title={{Exact simulation of the Wright-Fisher diffusion}},
  author={Jenkins, Paul and Span{\`o}, Dario},
  journal={Annals of Applied Probability},
  volume={27},
  number={3},
  pages={1478--1509},
  year={2017},
  publisher={Institute of Mathematical Statistics}
}

@article{DOnofrio2,
	title={{Jacobi processes with jumps as neuronal models: A first passage time analysis}},
	author={{D'Onofrio}, Giuseppe and Patie, Pierre and Sacerdote, Laura},
	journal={SIAM Journal on Applied Mathematics},
	volume={84},
	number={1},
	pages={189--214},
	year={2024},
	publisher={SIAM}
}

@inproceedings{Chen,
	title={On the convergence of densities of finite voter models to the {W}right--{F}isher diffusion},
	author={Chen, Yu-Ting and Choi, Jihyeok and Cox, J Theodore},
	booktitle={Annales de l'Institut Henri Poincar{\'e}-Probabilit{\'e}s et Statistiques},
	volume={52},
	number={1},
	pages={286--322},
	year={2016}
}

@article{Toscani,
	author = {Giuseppe Toscani},
	title = {{Kinetic models of opinion formation}},
	volume = {4},
	journal = {Communications in Mathematical Sciences},
	number = {3},
	publisher = {International Press of Boston},
	pages = {481 -- 496},
	year = {2006},
}

@article{Mena,
	title={Dynamic density estimation with diffusive {D}irichlet mixtures},
	author={Mena, Rams{\'e}s H. and Ruggiero, Matteo},
	journal={Bernoulli},
	volume={22},
	number={2},
	pages={901--926},
	year={2016}
}

@article{Perrone,
	title={Poisson random fields for dynamic feature models},
	author={Perrone, Valerio and Jenkins, Paul A and Spano, Dario and Teh, Yee Whye},
	journal={Journal of Machine Learning Research},
	volume={18},
	number={127},
	pages={1--45},
	year={2017}
}

@article{Filipovic,
	title={The {J}acobi stochastic volatility model},
	author={Ackerer, Damien and Filipovi{\'c}, Damir and Pulido, Sergio},
	journal={Finance and Stochastics},
	volume={22},
	pages={667--700},
	year={2018},
	publisher={Springer}
}

@article{Larsen,
	title={Diffusion models for exchange rates in a target zone},
	author={Larsen, Kristian Stegenborg and S{\o}rensen, Michael},
	journal={Mathematical Finance},
	volume={17},
	number={2},
	pages={285--306},
	year={2007},
	publisher={Wiley Online Library}
}

@article{Gourieroux,
	title={Multivariate {J}acobi process with application to smooth transitions},
	author={Gourieroux, Christian and Jasiak, Joann},
	journal={Journal of econometrics},
	volume={131},
	number={1-2},
	pages={475--505},
	year={2006},
	publisher={Elsevier}
}

@article{He2023,
	title={Estimating temporally variable selection intensity from ancient DNA data},
	author={He, Zhangyi and Wang, Zhenting and Beaumont, Mark and Yu, Feng},
	journal={Molecular Biology and Evolution},
	volume={40},
	number={3},
	pages={msad008},
	year={2023},
	doi={10.1093/molbev/msad008},
	publisher={Oxford University Press}
}

@article{Bouzid2021,
	title={Ancient {DNA} analysis},
	author={Bouzid, Nabil and Bravo, Guillermo A. and Austin, Jeremy J. and Cooper, Alan and Poinar, Hendrik N.},
	journal={Nature Reviews Methods Primers},
	volume={1},
	pages={14},
	year={2021},
	doi={10.1038/s43586-020-00011-0},
	publisher={Nature Publishing Group}
}

@article{Schraiber,
	title={{B}ayesian inference of natural selection from allele frequency time series},
	author={Schraiber, Joshua G and Evans, Steven N and Slatkin, Montgomery},
	journal={Genetics},
	volume={203},
	number={1},
	pages={493--511},
	year={2016},
	publisher={Oxford University Press}
}

@article{Steinrucken,
	title={A novel spectral method for inferring general diploid selection from time series genetic data},
	author={Steinr{\"u}cken, Matthias and Bhaskar, Anand and Song, Yun S},
	journal={Annals of Applied Statistics},
	volume={8},
	number={4},
	pages={2203},
	year={2014}
}

@article{Bollback,
	title={Estimation of {2Nes} from temporal allele frequency data},
	author={Bollback, Jonathan P and York, Thomas L and Nielsen, Rasmus},
	journal={Genetics},
	volume={179},
	number={1},
	pages={497--502},
	year={2008},
	publisher={Oxford University Press}
}

@article{Sant25,
	title={Bayesian inference from time series of allele frequency data using exact simulation techniques},
	author={Sant, Jaromir and Jenkins, Paul A and Koskela, Jere and Spano, Dario},
	journal={arXiv preprint arXiv:2502.12279},
	year={2025}
}

@article{ethier1993transition,
  title={The transition function of a Fleming-Viot process},
  author={Ethier, Stewart N and Griffiths, RC},
  journal={The Annals of Probability},
  pages={1571--1590},
  year={1993},
  publisher={JSTOR}
}

@article{douc2004,
  author  = {R{\'e}mi Douc and {\'E}ric Moulines and Tobias Ryd{\'e}n},
  title   = {Asymptotic properties of the maximum likelihood estimator in autoregressive models with {Markov} regime},
  journal = {Annals of Statistics},
  year    = {2004},
  volume  = {32},
  number  = {5},
  pages   = {2254}
}

@article{bickel1998,
  author  = {Peter J. Bickel and Ya'acov Ritov and Tobias Ryd{\'e}n},
  title   = {Asymptotic normality of the maximum-likelihood estimator for general hidden Markov models},
  journal = {Annals of Statistics},
  year    = {1998},
  volume  = {26},
  number  = {4},
  pages   = {1614--1635}
}

@article{leglandmevel2000,
  author  = {Fran{\c c}ois Le Gland and Laurent Mevel},
  title   = {Exponential forgetting and geometric ergodicity in hidden Markov models},
  journal = {Mathematics of Control, Signals, and Systems},
  year    = {2000},
  volume  = {13},
  pages   = {63--93}
}

@book{Sarkka2013,
  author    = {S{\"a}rkk{\"a}, Simo},
  title     = {Bayesian Filtering and Smoothing},
  year      = {2013},
  publisher = {Cambridge University Press},
  address   = {Cambridge}
}

@book{zucchini2009hidden,
  title={Hidden Markov models for time series: an introduction using R},
  author={Zucchini, Walter and MacDonald, Iain L},
  year={2009},
  publisher={Chapman and Hall/CRC}
}

@book{cappe2005,
  author    = {Capp{\'e}, Olivier and Moulines, Eric and Ryd{\'e}n, Tobias},
  title     = {Inference in Hidden Markov Models},
  publisher = {Springer},
  year      = {2005}
}

@article{kalmanbucy1961,
  author  = {Kalman, R. E. and Bucy, R. S.},
  title   = {New Results in Linear Filtering and Prediction Theory},
  journal = {Journal of Basic Engineering},
  year    = {1961},
  volume  = {83},
  number  = {1},
  pages   = {95--108},
  doi     = {10.1115/1.3658902}
}

@article{baum1970,
  author = {Baum, Leonard E. and Petrie, Ted and Soules, George and Weiss, Norman},
  title = {A Maximization Technique Occurring in the Statistical Analysis of Probabilistic Functions of Markov Chains},
  journal = {Annals of Mathematical Statistics},
  year = {1970},
  volume = {41},
  number = {1},
  pages = {164--171}
}

@article{kalman1960,
    Author = {Kalman, Rudolph Emil},
    Title = {{A New Approach to Linear Filtering and Prediction Problems}},
    Journal = {Transactions of the ASME--Journal of Basic Engineering},
    Volume = {82},
    Number = {Series D},
    Pages = {35--45},
    Year = {1960}
}

@article{dallapria2026,
  author  = {Dalla Pria, Marco and Ruggiero, Matteo and Span{\`o}, Dario},
  title   = {Exact inference via quasi-conjugacy in two-parameter {Poisson--Dirichlet} hidden {Markov} models},
  journal = {Journal of the American Statistical Association},
  year    = {2026},
  note    = {Forthcoming}
}

@article{ascolani_predictive_2021,
  author  = {Ascolani, Filippo and Lijoi, Antonio and Ruggiero, Matteo},
  title   = {Predictive inference with {Fleming--Viot}-driven dependent {Dirichlet} processes},
  journal = {Bayesian Analysis},
  year    = {2021},
  volume  = {16},
  number  = {2},
  pages   = {371--395}
}

@article{ascolani_smoothing_2023,
  author  = {Ascolani, Filippo and Lijoi, Antonio and Ruggiero, Matteo},
  title   = {Smoothing distributions for conditional {Fleming--Viot} and {Dawson--Watanabe} diffusions},
  journal = {Bernoulli},
  year    = {2023},
  volume  = {29},
  number  = {2},
  pages   = {1410--1434}
}

@article{boetti_2024,
  author  = {Boetti, Chiara and Ruggiero, Matteo},
  title   = {Filtering coupled {Wright--Fisher} diffusions},
  journal = {Journal of Mathematical Biology},
  year    = {2024},
  volume  = {89},
  pages   = {64}
}

@book{ethier_kurtz_1986,
  author    = {Ethier, Stewart N. and Kurtz, Thomas G.},
  title     = {Markov Processes: Characterization and Convergence},
  year      = {1986},
  publisher = {John Wiley \& Sons},
  address   = {New York},
  series    = {Wiley Series in Probability and Mathematical Statistics},
  isbn      = {9780471081861}
}

@article{griffiths1979transition,
  title={A transition density expansion for a multi-allele diffusion model},
  author={Griffiths, RC},
  journal={Advances in Applied Probability},
  volume={11},
  number={2},
  pages={310--325},
  year={1979},
  publisher={Cambridge University Press}
}

@article{griffiths_lines_1980,
  author  = {Griffiths, Robert C.},
  title   = {Lines of descent in the diffusion approximation of neutral {Wright--Fisher} models},
  journal = {Theoretical Population Biology},
  year    = {1980},
  volume  = {17},
  number  = {1},
  pages   = {37--50}
}

@article{griffiths_asymptotic_1984,
  author  = {Griffiths, Robert C.},
  title   = {Asymptotic line-of-descent distributions},
  journal = {Journal of Mathematical Biology},
  year    = {1984},
  volume  = {21},
  number  = {1},
  pages   = {67--75}
}

@article{kingman1982coalescent,
  title={The coalescent},
  author={Kingman, John Frank Charles},
  journal={Stochastic processes and their applications},
  volume={13},
  number={3},
  pages={235--248},
  year={1982},
  publisher={Elsevier}
}

@incollection{griffiths2010diffusion,
  author    = {Griffiths, Robert C. and Span\`{o}, D.},
  title     = {Diffusion Processes and Coalescent Trees},
  booktitle = {Probability and Mathematical Genetics: Papers in Honour of Sir John Kingman},
  editor    = {Bingham, N. H. and Goldie, C. M.},
  series    = {London Mathematical Society Lecture Note Series},
  volume    = {378},
  pages     = {257--276},
  publisher = {Cambridge University Press},
  address   = {Cambridge},
  year      = {2010}
}

@article{griffiths_wrightfisher_2018,
  author  = {Griffiths, Robert C. and Jenkins, Paul A. and Span{\`o}, Dario},
  title   = {Wright--Fisher diffusion bridges},
  journal = {Theoretical Population Biology},
  year    = {2018},
  volume  = {122},
  pages   = {67--77}
}

@article{garcia-pareja_exact_2021,
  author  = {Garc{\'i}a-Pareja, Celia and Hult, Henrik and Koski, Timo},
  title   = {Exact simulation of coupled {Wright--Fisher} diffusions},
  journal = {Advances in Applied Probability},
  year    = {2021},
  volume  = {53},
  number  = {4},
  pages   = {923--950},
  doi     = {10.1017/apr.2021.9}
}

@article{KKK21,
  author  = {Kon Kam King, Guillaume and Papaspiliopoulos, Omiros and Ruggiero, Matteo},
  title   = {Exact inference for a class of hidden {Markov} models on general state spaces},
  journal = {Electronic Journal of Statistics},
  year    = {2021},
  volume  = {15},
  pages   = {2832--2875}
}

@article{KKK24,
  author  = {Kon Kam King, Guillaume and Pandolfi, Andrea and Piretto, Matteo and Ruggiero, Matteo},
  title   = {Approximate filtering via discrete dual processes},
  journal = {Stochastic Processes and their Applications},
  year    = {2024},
  volume  = {168},
  pages   = {104268}
}

@article{papaspiliopoulos_optimal_2014,
  author  = {Papaspiliopoulos, Omiros and Ruggiero, Matteo},
  title   = {Optimal filtering and the dual process},
  journal = {Bernoulli},
  year    = {2014},
  volume  = {20},
  number  = {4},
  pages   = {1999--2019}
}

@article{papaspiliopoulos_conjugacy_2016,
  author  = {Papaspiliopoulos, Omiros and Ruggiero, Matteo and Span{\`o}, Dario},
  title   = {Conjugacy properties of time-evolving {Dirichlet} and gamma random measures},
  journal = {Electronic Journal of Statistics},
  year    = {2016},
  volume  = {10},
  number  = {2},
  pages   = {3452--3489}
}

@article{tavare_1984,
  author  = {Tavar{\'e}, Simon},
  title   = {Line-of-descent and genealogical processes, and their applications in population genetics models},
  journal = {Theoretical Population Biology},
  year    = {1984},
  volume  = {26},
  number  = {2},
  pages   = {119--164}
}

@book{vaart_asymptotic_1998,
  author    = {van der Vaart, Aad W.},
  title     = {Asymptotic Statistics},
  year      = {1998},
  series    = {Cambridge Series in Statistical and Probabilistic Mathematics},
  publisher = {Cambridge University Press},
  address   = {Cambridge}
}

\clearpage
\appendix
\renewcommand{\thesection}{\Alph{section}}
\renewcommand{\thesubsection}{\thesection.\arabic{subsection}}
\renewcommand{\thetheorem}{SM.\thesection.\arabic{theorem}}
\renewcommand{\theaxiom}{SM.\arabic{axiom}}
\renewcommand{\theequation}{SM.\arabic{equation}}
\section*{Supplementary Material}
\addcontentsline{toc}{section}{Supplementary Material}

\section{Proofs and auxiliary results}
\label{sec:supplementary}


\subsection{Proof of Proposition \ref{prop:coal-tail}}\label{app:truncation}

\begin{proof}[Proof of Proposition~\ref{prop:coal-tail}]
Let $N_t$ denote the number of blocks of the $n$-coalescent at time $t$.
Similarly, let $T_M$ denote the first time at which exactly $M$ blocks remain. Then
\[
\sum_{k > M} p_{n, k}(t) = \prob{N_t > M \mid N_0 = n} = \prob{T_M > t},
\]
where
\[
T_M = \sum_{i = M+1}^n \tau_i, \quad \tau_i \stackrel{\mathrm{ind.}}{\sim} \mathrm{Exp}\!\left(\frac{i(i+\theta -1)}{2}\right).
\]
Let $s > 0$. By Markov's inequality,
\begin{align}
\sum_{k > M} p_{n, k}(t)
&=
\prob{ e^{\sum\nolimits_{i = M+1}^n s \tau_i} > e^{st}}
\leq e^{-st} \prod_{i = M+1}^n \mean{e^{ s \tau_i}}.
\end{align}
If $s < (M+1)(M+\theta )/2$, then
\[
\mean{e^{s\tau_i}} = \frac{1}{1- \frac{2s}{i(i+\theta -1)}} \quad \text{for every } i > M,
\]
which implies
\[
\sum_{k > M} p_{n, k}(t)
\leq e^{-st-\sum_{i = M+1}^n \log \left(1- \frac{2s}{i(i+\theta -1)}\right)}.
\]
Choose now $s = M^2/3$. Then
\[
\begin{aligned}
-\sum_{i = M+1}^n \log \left(1- \frac{2s}{i(i+\theta -1)}\right)
&\leq \frac{2}{3}\sum_{i = M+1}^n\frac{M^2}{i(i+\theta -1)}\left(1-\frac{2}{3}\frac{M^2}{i(i+\theta -1)} \right)^{-1}\\
& \leq rM^2\sum_{i = M+1}^n\frac{1}{i(i+\theta -1)},
\end{aligned}
\]
for some constant $r > 0$ independent of $M$. Bounding the sum by the corresponding integral gives
\[
rM^2\sum_{i = M+1}^n\frac{1}{i(i+\theta -1)}
\leq rM^2\int_{M + 1}^\infty\frac{1}{x(x+\theta -1)}\, \de x \leq r'M,
\]
with another constant $r' > 0$. Therefore
\[
\sum_{k > M} p_{n, k}(t) \leq e^{-M^2t/2 + r'M}.
\]
Since $-M^2t/2 + r'M \leq -cM^2t$ for a suitable constant $c > 0$ depending only on $t$, the result follows.
\end{proof}


\subsection{Proofs of Theorem \ref{thm:predictive-empirical} and Corollary \ref{prop:predictive-target}}

We first need two preliminary lemmas.
\begin{lemma}\label{lem:transition-lipschitz}
For every $t>0$ there exists a finite constant
\[
L_t:=\sum_{k=0}^\infty k\,p_k(t)<\infty
\]
such that
\[
\TV{p_t(x,\cdot)}{p_t(y,\cdot)}\le L_t\|x-y\|_1,
\qquad x,y\in\Delta_d.
\]
\end{lemma}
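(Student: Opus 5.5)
The plan is to use the Dirichlet-mixture representation \eqref{eq:wf-transition-compact-sec2} and couple the two mixtures through their mixing laws. Since both $p_t(x,\cdot)$ and $p_t(y,\cdot)$ are mixtures of the same kernels $\pi_{\aa+\kk}$, with weights $p^x_{\kk}(t)$ and $p^y_{\kk}(t)$, convexity of total variation gives
\[
\TV{p_t(x,\cdot)}{p_t(y,\cdot)}\le \frac12\sum_{\kk}\big|p^x_{\kk}(t)-p^y_{\kk}(t)\big|
=\sum_{k\ge0}p_k(t)\,\TV{\mathrm{Mult}_k(x)}{\mathrm{Mult}_k(y)},
\]
where $\mathrm{Mult}_k(x)$ denotes the multinomial law of $\kk$ with $|\kk|=k$ and cell probabilities $x$. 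This step uses that $p^x_{\kk}(t)=p_{|\kk|}(t)\binom{|\kk|}{\kk}x^{\kk}$, so the mixing law is a mixture over $k$ of multinomials with common weights $p_k(t)$. The weights $p_k(t)$ do not depend on $x$, so the difference splits level by level.

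Next I would bound the multinomial total variation by $k\,\TV{\Mult(x)}{\Mult(y)}=\frac k2\|x-y\|_1\le k\|x-y\|_1$. A multinomial with $k$ trials is the law of the counts of $k$ i.i.d.\ categorical draws, and total variation contracts under this measurable map. The total variation of the $k$-fold products is in turn at most $k$ times the single-draw distance, either by subadditivity or via the maximal coupling of each coordinate and a union bound. Summing over $k$ then gives the claim with $L_t=\sum_k k\,p_k(t)$; the factor $1/2$ leaves room if $d_{TV}$ is normalized as a supremum over sets.

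It remains to show $L_t<\infty$, which is the only nonroutine point. I would use Proposition~\ref{prop:coal-tail} with $n=\infty$:
\[
\sum_k k\,p_k(t)=\sum_{M\ge0}\prob{N_t>M}\le\sum_{M\ge0}e^{-cM^2t}<\infty.
\]
I expect no real obstacle here. The only care needed is to justify the mixture convexity bound for densities with countably many components, which follows by exchanging sum and integral by monotone convergence. One also has to check that $p_{\infty,k}(t)=p_k(t)$ is covered by the proposition, and it is, since the proposition allows $n=+\infty$.
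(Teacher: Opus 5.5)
Your proposal is correct and follows the paper's proof essentially step for step: the Dirichlet-mixture representation, reduction to the total variation between the mixing laws level by level in $k$, data processing plus tensorization for the multinomial, and finiteness of $L_t$ from Proposition~\ref{prop:coal-tail}. The only differences are cosmetic. Your convexity step with the factor $\frac12$ is slightly sharper than the paper's factor $2$, which the paper then cancels against $\TV{\Mult(x)}{\Mult(y)}=\frac12\|x-y\|_1$. You also get $L_t<\infty$ from the tail-sum identity $\sum_k k\,p_k(t)=\sum_{M\ge0}\prob{N_t>M}$ rather than from the pointwise bound $p_k(t)\le e^{-ctk^2}$, and both are immediate from the proposition.
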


\begin{proof}

First of all notice that $L_t$ is well-defined and finite. Indeed Proposition~\ref{prop:coal-tail} implies $p_k(t)\le e^{-ctk^2}$ for a suitable constant $c_t>0$, and therefore $\sum_k k\,p_k(t)<\infty$.

For $k\ge 0$ and $x\in\Delta_d$, let
\[
M_k^x(\kk):=\binom{k}{\kk}x^\kk,
\qquad \kk\in\mathbb{Z}_+^d,\quad |\kk|=k,
\]
be the probability mass function of a multinomial distribution with size $k$ and probability vector $x$. Then
\[
p_t(x,\cdot)=\sum_{k=0}^\infty p_k(t)\sum_{|\kk|=k} M_k^x(\kk)\,\pi_{\aa+\kk}(\cdot),
\]
and similarly for $p_t(y,\cdot)$. Let $A\subseteq\Delta_d$ be measurable. Since $0\le \pi_{\aa+\kk}(A)\le 1$, we have that
\[
\begin{aligned}
\bigl|p_t(x,A)-p_t(y,A)\bigr|
&=
\left|
\sum_{k=0}^\infty p_k(t)\sum_{|\kk|=k}
\bigl(M_k^x(\kk)-M_k^y(\kk)\bigr)\pi_{\aa+\kk}(A)
\right| \\
&\le
\sum_{k=0}^\infty p_k(t)\sum_{|\kk|=k}\bigl|M_k^x(\kk)-M_k^y(\kk)\bigr| \\
&=
2\sum_{k=0}^\infty p_k(t)\,\TV{M_k^x}{M_k^y},
\end{aligned}
\]
and therefore
\begin{equation}\label{eq:bound_TV_distance}
\TV{p_t(x,\cdot)}{p_t(y,\cdot)}
\le
2\sum_{k=0}^\infty p_k(t)\,\TV{M_k^x}{M_k^y}.
\end{equation}

Now let $U_1,\dots,U_k$ be i.i.d.\ with law $\Mult(x)$ and $V_1,\dots,V_k$ be i.i.d.\ with law $\Mult(y)$. If $T(u_1,\dots,u_k)$ denotes the vector of category counts, then
\[
T(U_1,\dots,U_k)\sim M_k^x,
\qquad
T(V_1,\dots,V_k)\sim M_k^y,
\]
which by the data processing inequality and the tensorization bound implies
\[
\TV{M_k^x}{M_k^y}
\le
\TV{\Mult(x)^{\otimes k}}{\Mult(y)^{\otimes k}} \leq k\,\TV{\Mult(x)}{\Mult(y)} = \frac{k}{2}\|x-y\|_1.
\]
Substituting this into \eqref{eq:bound_TV_distance} gives the result.
\end{proof}

\begin{lemma}\label{lem:dirichlet-mean-dev}
Let $X\sim\pi_{\beta}$ with parameter vector $\beta=(\beta_1,\dots,\beta_d)\in(0,\infty)^d$ and total mass $\beta_0=\sum_{j=1}^d\beta_j$. Let
\[
m=\mean{X}=\frac{\beta}{\beta_0}.
\]
Then
\[
\mean{\|X-m\|_1}\le \sqrt{\frac{d}{\beta_0+1}}.
\]
\end{lemma}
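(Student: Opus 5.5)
We bound the $L^1$ deviation by the $L^2$ deviation coordinatewise, using the explicit Dirichlet moments. Recall that under $\pi_\beta$ each marginal is $X_j\sim\mathrm{Beta}(\beta_j,\beta_0-\beta_j)$, so with $m_j=\beta_j/\beta_0$,
\[
\mathrm{Var}(X_j)=\frac{m_j(1-m_j)}{\beta_0+1}.
\]

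First, Jensen's (or Cauchy--Schwarz's) inequality applied to each coordinate gives $\mean{|X_j-m_j|}\le \sqrt{\mathrm{Var}(X_j)}$. Summing over $j$,
\[
\mean{\|X-m\|_1}\le \sum_{j=1}^d\sqrt{\frac{m_j(1-m_j)}{\beta_0+1}}.
\]

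Second, apply Cauchy--Schwarz to the sum over $j$:
\[
\sum_{j=1}^d\sqrt{m_j(1-m_j)}\le \sqrt{d\sum_{j=1}^d m_j(1-m_j)}=\sqrt{d\Bigl(1-\sum_{j=1}^d m_j^2\Bigr)}\le\sqrt d,
\]
where we used $\sum_j m_j=1$. Dividing by $\sqrt{\beta_0+1}$ yields the claimed bound $\sqrt{d/(\beta_0+1)}$.

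There is no real obstacle here. The only point to check is the variance formula for the Beta marginals of the Dirichlet, which is standard. An alternative route is to bound $\mean{\|X-m\|_1}\le\sqrt d\,\mean{\|X-m\|_2}\le\sqrt{d\,\mean{\|X-m\|_2^2}}$ and use $\mean{\|X-m\|_2^2}=(1-\sum_j m_j^2)/(\beta_0+1)\le 1/(\beta_0+1)$. This gives the same constant.
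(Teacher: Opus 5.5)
Your proof is correct and is essentially the paper's argument. The paper uses exactly your ``alternative route'': the pointwise bound $\|X-m\|_1\le\sqrt d\,\|X-m\|_2$, then Jensen, then the Dirichlet variance formula giving $\mean{\|X-m\|_2^2}=(1-\sum_j m_j^2)/(\beta_0+1)\le 1/(\beta_0+1)$. Your main route only swaps the order of Jensen and Cauchy--Schwarz and yields the same constant.
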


\begin{proof}
By Jensen's inequality we have that
\[
\mean{\|X-m\|_1}\le \sqrt{d\,\mean{\|X-m\|_2^2}}.
\]
Moreover for the Dirichlet distribution it holds that
\[
\mathrm{Var}(X_j)=\frac{m_j(1-m_j)}{\beta_0+1},
\qquad
j=1,\dots,d.
\]
Therefore
\[
\mean{\|X-m\|_2^2}
=
\sum_{j=1}^d \mathrm{Var}(X_j)
=
\frac{1-\sum_{j=1}^d m_j^2}{\beta_0+1}
\le
\frac{1}{\beta_0+1},
\]
and the claim follows.
\end{proof}
We can now prove the main theorem of this section.
\begin{theorem}\label{thm:predictive-rate}
For every $i=2,\dots,k$ and $x \in \Delta_d$ we have that
\[
\TV{\post_{i\mid i-1}(\cdot\mid \data{i-1})}{\trans{i}{i-1}(\cdot\mid x)}
\le
L_{h_i}\left(
\|\hat x_{i-1}-x\|_1
+\frac{2\theta}{n+\theta}
+\sqrt{\frac{d}{n+\theta+1}}
\right)
\]
almost surely under $\mu_{i-1}^{(n)}$, where $h_i=t_i-t_{i-1}$ and $L_{h_i}$ is the constant from Lemma~\ref{lem:transition-lipschitz}.

Consequently, for every $r>0$,
\[
\mu_{i-1}^{(n)}\!\left(
\TV{\post_{i\mid i-1}(\cdot\mid \data{i-1})}{\trans{i}{i-1}(\cdot\mid x_{i-1}^*)}
>
L_{h_i}\left(
r+\frac{2\theta}{n+\theta}
+\sqrt{\frac{d}{n+\theta+1}}
\right)
\right)
\le
2d\exp\!\left\{-\frac{2nr^2}{d^2}\right\}.
\]
\end{theorem}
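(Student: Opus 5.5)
The approach is to avoid working with the weights $p_{\nn,\kk}(h_i)$ in \eqref{eq:predictive-density-sec2} directly. Instead, we represent the one-step predictive law as the WF transition kernel integrated against the time-$t_{i-1}$ posterior, and then use Lemma~\ref{lem:transition-lipschitz}.

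\textbf{Step 1: mixture representation.} Under the prior, $X_{i-1}$ has the stationary law $\pi_{\aa}$, since $X$ is started in stationarity. Multinomial--Dirichlet conjugacy then gives that the posterior of $X_{i-1}$ given $\data{i-1}$ is $\pi_{\aa+\nn_{i-1}}$. By the Markov property, $X_i$ depends on $\data{i-1}$ only through $X_{i-1}$, so
\[
\post_{i\mid i-1}(\cdot\mid \data{i-1})=\int_{\Delta_d} p_{h_i}(z,\cdot)\,\pi_{\aa+\nn_{i-1}}(\de z).
\]
This identity is exactly what the duality formula \eqref{eq:predictive-density-sec2} encodes (cf.\ \citet{papaspiliopoulos_optimal_2014}). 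I will take it as the starting point. If a direct check is wanted, it amounts to verifying that integrating \eqref{eq:wf-transition-compact-sec2} against $\pi_{\aa+\nn}$ reproduces \eqref{eq:predictive-density-sec2}. I expect this justification, rather than any estimate, to be the only delicate point.

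\textbf{Step 2: convexity and Lipschitz bound.} Total variation is jointly convex, so for any $x\in\Delta_d$,
\[
\TV{\post_{i\mid i-1}(\cdot\mid \data{i-1})}{p_{h_i}(x,\cdot)}\le \int \TV{p_{h_i}(z,\cdot)}{p_{h_i}(x,\cdot)}\,\pi_{\aa+\nn_{i-1}}(\de z)\le L_{h_i}\,\mathbb E\|Z-x\|_1,
\]
where $Z\sim\pi_{\aa+\nn_{i-1}}$ and the last step uses Lemma~\ref{lem:transition-lipschitz}.

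\textbf{Step 3: splitting the expected distance.} Let $m=(\aa+\nn_{i-1})/(\theta+n)$ be the posterior mean. By the triangle inequality,
\[
\mathbb E\|Z-x\|_1\le \mathbb E\|Z-m\|_1+\|m-\hat x_{i-1}\|_1+\|\hat x_{i-1}-x\|_1 .
\]
The first term is handled by Lemma~\ref{lem:dirichlet-mean-dev} with $\beta_0=\theta+n$, giving at most $\sqrt{d/(n+\theta+1)}$. For the second term, write $m_j-\hat x_{i-1,j}=(\alpha_j-\theta\hat x_{i-1,j})/(\theta+n)$. Summing absolute values over $j$ gives at most $(\theta+\theta)/(n+\theta)=2\theta/(n+\theta)$. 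The third term is kept as it stands. Together these give the deterministic bound, which therefore holds for every realisation of the data, and so almost surely under $\mu_{i-1}^{(n)}$.

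\textbf{Step 4: high-probability statement.} Set $x=x_{i-1}^*$ in the deterministic bound. On the event $\|\hat x_{i-1}-x_{i-1}^*\|_1\le r$, the claimed inequality holds, so it suffices to bound the probability of the complement. That complement is contained in $\bigcup_{j=1}^d\{|\hat x_{i-1,j}-x^*_{i-1,j}|>r/d\}$. Each $\hat x_{i-1,j}$ is an average of $n$ i.i.d.\ Bernoulli$(x^*_{i-1,j})$ variables under \eqref{eq:true_mechanism}, so Hoeffding's inequality bounds each event's probability by $2\exp\{-2nr^2/d^2\}$. A union bound over $j$ yields the stated $2d\exp\{-2nr^2/d^2\}$.
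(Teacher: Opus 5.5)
Your proposal is correct and follows the paper's proof essentially step for step. It uses the same representation of $\post_{i\mid i-1}(\cdot\mid \data{i-1})$ as $p_{h_i}(z,\cdot)$ integrated against $\pi_{\aa+\nn_{i-1}}(\de z)$, which the paper also justifies simply by Bayes' formula (stationary prior plus conjugacy) and the Markov property. It then applies convexity with Lemma~\ref{lem:transition-lipschitz}, makes the same three-term split through the posterior mean using Lemma~\ref{lem:dirichlet-mean-dev} and the $2\theta/(n+\theta)$ computation, and finishes with the same coordinatewise Hoeffding and union-bound argument for the high-probability statement.
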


\begin{proof}
By Bayes' formula at time $i-1$ and the Markov property of the latent signal,
\[
\post_{i\mid i-1}(\cdot\mid \data{i-1})
=
\int_{\Delta_d} p_{h_i}(x',\cdot)\,\pi_{\aa+\nn_{i-1}}(\de x').
\]
Therefore
\[
\begin{aligned}
\TV{\post_{i\mid i-1}(\cdot\mid \data{i-1})}{\trans{i}{i-1}(\cdot\mid x)}
&\le
\int_{\Delta_d}
\TV{p_{h_i}(x',\cdot)}{p_{h_i}(x,\cdot)}
\pi_{\aa+\nn_{i-1}}(\de x') \\
&\le
L_{h_i}\int_{\Delta_d}\|x'-x\|_1\,\pi_{\aa+\nn_{i-1}}(\de x'),
\end{aligned}
\]
by Lemma~\ref{lem:transition-lipschitz}. Let now
\[
m_{i-1}:=\frac{\aa+\nn_{i-1}}{\theta+n}
\]
be the mean of $\pi_{\aa+\nn_{i-1}}$. Then
\[
\int_{\Delta_d}\|x'-x\|_1\,\pi_{\aa+\nn_{i-1}}(\de x')
\le
\mean{\|X-m_{i-1}\|_1}+\|m_{i-1}-x\|_1,
\qquad X\sim\pi_{\aa+\nn_{i-1}}.
\]
By Lemma~\ref{lem:dirichlet-mean-dev},
\[
\mean{\|X-m_{i-1}\|_1}\le \sqrt{\frac{d}{n+\theta+1}}.
\]
Moreover,
\begin{equation}\label{eq:to_use_later}
\begin{aligned}
\|m_{i-1}-\hat x_{i-1}\|_1
=&\,
\sum_{j=1}^d
\left|
\frac{\alpha_j+n_{i-1,j}}{n+\theta}
-
\frac{n_{i-1,j}}{n}
\right|\\
=&\,
\sum_{j=1}^d
\frac{|n\alpha_j-\theta n_{i-1,j}|}{n(n+\theta)}
\le
\frac{n\theta+\theta\sum_{j=1}^d n_{i-1,j}}{n(n+\theta)}
=
\frac{2\theta}{n+\theta}.
\end{aligned}
\end{equation}
Hence
\[
\|m_{i-1}-x\|_1
\le
\|\hat x_{i-1}-x\|_1+\frac{2\theta}{n+\theta}.
\]
Combining the previous displays proves the first bound.

As regards the second part of the statement, note that
\[
\|\hat x_{i-1}-x_{i-1}^*\|_1>r
\quad\Longrightarrow\quad
\max_{1\le j\le d}|\hat x_{i-1,j}-x_{i-1,j}^*|>\frac{r}{d}.
\]
Thus, by a union bound,
\[
\mu_{i-1}^{(n)}\!\left(\|\hat x_{i-1}-x_{i-1}^*\|_1>r\right)
\le
\sum_{j=1}^d
\mu_{i-1}^{(n)}\!\left(|\hat x_{i-1,j}-x_{i-1,j}^*|>\frac{r}{d}\right).
\]
For each fixed $j$, the coordinate $\hat x_{i-1,j}$ is the empirical mean of $n$ i.i.d.\ Bernoulli variables with mean $x_{i-1,j}^*$. Hence Hoeffding's inequality gives
\[
\mu_{i-1}^{(n)}\!\left(|\hat x_{i-1,j}-x_{i-1,j}^*|>\frac{r}{d}\right)
\le
2\exp\!\left\{-2n\frac{r^2}{d^2}\right\},
\]
and summing over $j=1,\dots,d$ yields
\[
\mu_{i-1}^{(n)}\!\left(\|\hat x_{i-1}-x_{i-1}^*\|_1>r\right)
\le
2d\exp\!\left\{-2n\frac{r^2}{d^2}\right\}.
\]
The result now follows from the first part of the statement.
\end{proof}

\begin{proof}[Proofs of Theorem~\ref{thm:predictive-empirical} and Corollary \ref{prop:predictive-target}]
Both results follow by the first part of Theorem~\ref{thm:predictive-rate} with $x = \hat{x}_{i-1}$, since $\hat{x}_{i-1} \to x_{i-1}^*$ almost surely by assumption.
\end{proof}

\subsection{Proof of Proposition~\ref{thm:bridge-main}}\label{app:local-targets}
Denote
\begin{equation}\label{eq:wf-bridge-C-sec2}
C_{\kk, \kk'} =
\frac{
\Gamma(\theta + |\kk|)
\Gamma(\theta + |\kk'|)\prod_{j = 1}^d
\Gamma(\alpha_j +k_j + k'_j)
}{
\prod_{j = 1}^d\Gamma(\alpha_j +k_j)
\prod_{j = 1}^d\Gamma(\alpha_j +k'_j)
\Gamma(\theta + |\kk| + |\kk'|).
}
\end{equation}
Then recall from \eqref{eq:predictive-density-sec2} that the local smoother can be written as
\[
\post_{i\mid i-1,i+1}(x\mid \data{i-1},\data{i+1})
=\sum_{\kk\le\nn_{i-1},\,  \kk'\le\nn_{i+1}}
w^{\nn_{i-1},\nn_{i+1}}_{\kk,\kk'}(h_i,h_{i+1})\,
\pi_{\aa+\kk+\kk'}(x),
\]
with 
\[
w^{\nn_{i-1},\nn_{i+1}}_{\kk,\kk'}(h_i,h_{i+1})
\propto
C_{\kk,\kk'}\,
p_{\nn_{i-1},\kk}(h_i)\,
p_{\nn_{i+1},\kk'}(h_{i+1}).
\]
Similarly from \eqref{eq:wf-bridge-mixture-sec2} the law of the bridge is given by
\[
\bridge{i}{i-1}{i+1}(z\mid x,x')
=\sum_{\kk\in\mathbb Z_+^d}\sum_{\kk'\in\mathbb Z_+^d}
w_{\kk,\kk'}(h_i,h_{i+1})\,\pi_{\aa+\kk+\kk'}(z),
\]
where
\[
w_{\kk,\kk'}(h_i,h_{i+1})
\propto
C_{\kk,\kk'}\,p_{\kk}^{x}(h_i)\,p_{\kk'}^{x'}(h_{i+1}).
\]
We first need two technical results.
\begin{lemma}\label{lm:tech_convergence}
    Let $\kk \in \Z^d$ with $|\kk| = k$. Then we have that
    \[
    \binom{n}{k}^{-1}\binom{\nn}{\kk} \to \binom{k}{\kk}(x^*)^\kk
    \]
    $(x^*)^{\infty}$-a.s. as $n \to \infty.$
\end{lemma}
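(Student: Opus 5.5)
We prove this by writing the ratio explicitly as a product of falling factorials and using the strong law of large numbers coordinatewise. Here $\nn=(n_1,\ldots,n_d)$ denotes the vector of category counts from $n$ i.i.d.\ draws from $\Mult(x^*)$, so that $|\nn|=n$. We also fix $\kk$ with $|\kk|=k$ and write $(m)_j:=m(m-1)\cdots(m-j+1)$ for the falling factorial, with $(m)_0=1$.

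\textbf{Step 1 (algebra).} Using $\binom{n}{k}=\frac{(n)_k}{k!}$ and $\binom{\nn}{\kk}=\prod_{j=1}^d\binom{n_j}{k_j}=\prod_{j=1}^d\frac{(n_j)_{k_j}}{k_j!}$, we obtain
\[
\binom{n}{k}^{-1}\binom{\nn}{\kk}
=\frac{k!}{\prod_{j=1}^d k_j!}\cdot\frac{\prod_{j=1}^d (n_j)_{k_j}}{(n)_k}
=\binom{k}{\kk}\,\frac{\prod_{j=1}^d (n_j)_{k_j}}{(n)_k}.
\]
This identity uses the convention $(m)_j=0$ when $j>m$, so it also covers the case $k_j>n_j$, where both sides vanish. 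It therefore suffices to show that $\prod_j (n_j)_{k_j}/(n)_k\to\prod_j (x_j^*)^{k_j}$.

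\textbf{Step 2 (normalize by $n^k$).} Write the ratio as
\[
\frac{\prod_{j=1}^d (n_j)_{k_j}}{(n)_k}
=\frac{\prod_{j=1}^d n^{-k_j}(n_j)_{k_j}}{n^{-k}(n)_k}.
\]
This is valid because $\sum_j k_j=k$. Consider first the denominator. Since $k$ is fixed,
\[
n^{-k}(n)_k=\prod_{l=0}^{k-1}\Bigl(1-\frac{l}{n}\Bigr)\to1.
\]
Now consider the numerator, one coordinate at a time:
\[
n^{-k_j}(n_j)_{k_j}=\prod_{l=0}^{k_j-1}\Bigl(\hat x_j-\frac{l}{n}\Bigr).
\]
By the strong law of large numbers, applied to the indicators $\mathbbm{1}\{Y_{\cdot,m}=j\}$, we have $\hat x_j=n_j/n\to x_j^*$ almost surely under $(x^*)^{\infty}$. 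Each coordinate factor is a finite product, with $k_j$ terms, of quantities each converging almost surely to $x_j^*$. Hence it converges to $(x_j^*)^{k_j}$, and the numerator converges to $(x^*)^{\kk}$. Intersecting the finitely many almost-sure events, one for each $j$, gives the claim.

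\textbf{Main obstacle.} The only delicate point is the boundary case $x_j^*=0$ with $k_j>0$. There $n_j$ may stay small, possibly below $k_j$, and the falling factorial may vanish identically. Steps 1 and 2 already handle this case: the coordinate factor $\prod_{l<k_j}(\hat x_j-l/n)$ still tends to $0=(x_j^*)^{k_j}$ because $|\hat x_j-l/n|\le \hat x_j+k/n\to0$. No separate argument is needed, as long as the convention $(m)_j=0$ for $j>m$ is stated explicitly.
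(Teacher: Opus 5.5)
Your proof is correct and is essentially the argument the paper has in mind: the paper omits the proof, citing the standard convergence of the multivariate hypergeometric law to the multinomial, and your falling-factorial identity plus the strong law of large numbers for $\hat x_j$ is a direct verification of that fact. Two minor remarks: the ratio is only defined for $n\ge k$, which is harmless as $n\to\infty$, and $(m)_j=0$ for $j>m$ holds automatically, so you do not need to adopt it as a convention.
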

\begin{proof}
The statement is the well-known convergence of the multivariate hypergeometric distribution to the multinomial distribution, so we omit the proof.
\end{proof}
\begin{lemma}\label{lm:ineq_C}
Let $C_{\kk, \kk'}$ as in \eqref{eq:wf-bridge-C-sec2}. Then there exists $b > 0$ such that
\[
C_{\kk, \kk'} \leq e^{bk\log(k) + bk'\log(k')},
\]
for every $\kk, \kk' \in \Z^d$ such that $|\kk| = k$ and $|\kk'| = k'$.
\end{lemma}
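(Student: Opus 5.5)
The plan is to bound $\log C_{\kk,\kk'}$ by splitting the ratio \eqref{eq:wf-bridge-C-sec2} into two factors and treating each with elementary Gamma-function estimates. Write
\[
C_{\kk,\kk'}=\frac{\Gamma(\theta+k)\Gamma(\theta+k')}{\Gamma(\theta+k+k')}\cdot\prod_{j=1}^d\frac{\Gamma(\alpha_j+k_j+k'_j)}{\Gamma(\alpha_j+k_j)\Gamma(\alpha_j+k'_j)}.
\]
The first factor is $\Gamma(\theta+k)\Gamma(\theta+k')/\Gamma(\theta+k+k')$. It is controlled by $\Gamma(2\theta)\,B(\theta+k,\theta+k')\cdot\Gamma(\theta+k+k'+\theta)/(\Gamma(2\theta)\Gamma(\theta+k+k'))$. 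More simply, we use the Beta identity $\Gamma(a)\Gamma(b)/\Gamma(a+b)=B(a,b)\le B(\theta,\theta)$ for $a,b\ge\theta$, together with the correction $\Gamma(a+b)/\Gamma(a+b-\theta)\le (a+b)^{\theta}$. This makes the first factor at most a constant times $(2\theta+k+k')^{\theta}$. Since $\log(2\theta+k+k')\le \log(2\theta+1)+\log(1+k)+\log(1+k')$, this piece is absorbed into $e^{b k\log k+bk'\log k'}$ as long as $k,k'\ge 2$. The small cases $k\le1$ or $k'\le1$ are finitely many in the relevant index, and we handle them separately.

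For each factor of the product over $j$, the plan is to use the crude bound $\Gamma(\alpha_j+k_j+k'_j)/\Gamma(\alpha_j+k'_j)=\prod_{m=0}^{k_j-1}(\alpha_j+k'_j+m)\le(\alpha_j+k_j+k'_j)^{k_j}$. Combined with $1/\Gamma(\alpha_j+k_j)\le c_j$, where $\Gamma$ is bounded below on $(0,\infty)$ by $\min\Gamma\approx0.885$, this gives $\log(\text{factor}_j)\le k_j\log(\alpha_j+k+k')+\log c_j$.

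The main obstacle is that this bound mixes $k$ and $k'$ through the term $k\log(k+k')$, which is not of the form $bk\log k+bk'\log k'$ when $k'\gg k$. To get a symmetric estimate, I would instead use the binomial-type inequality
\[
\frac{\Gamma(a+b)}{\Gamma(a)\Gamma(b)}\le \frac{\Gamma(a+b)}{\Gamma(a+b-\alpha)\,\Gamma(\alpha)}\cdot\binom{\lceil a\rceil+\lceil b\rceil}{\lceil a\rceil}\text{-type factors}.
\]
Equivalently, compare the Dirichlet normalisations: the product over $j$ times the first factor equals $\frac{B(\aa+\kk)B(\aa+\kk')}{B(\aa)B(\aa+\kk+\kk')}$ times $B(\aa)$. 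This is the ratio $\int \pi_{\aa+\kk}\pi_{\aa+\kk'}$, up to constants, and it is bounded by $\|\pi_{\aa+\kk}\|_\infty$, using $\int\pi_{\aa+\kk'}=1$. So the key idea is to bound $C_{\kk,\kk'}$, up to a constant depending on $\aa$, by $\sup_z\pi_{\aa+\kk}(z)$ whenever $\alpha_j\ge1$, or by the minimum of the two sup norms. Then the target follows from a single-vector estimate.

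The remaining work is the single-vector bound $\sup_z\pi_{\aa+\kk}(z)\le\Gamma(\theta+k)/\prod_j\Gamma(\alpha_j+k_j)\le e^{b k\log k}$, using Stirling's formula $\log\Gamma(\theta+k)\le (\theta+k)\log(\theta+k)$. If some $\alpha_j<1$, the density is unbounded near the boundary. In that case I would instead write $\int\pi_{\aa+\kk}\pi_{\aa+\kk'}$ exactly as $C_{\kk,\kk'}$ times $\Gamma$-constants, and bound it by Hölder or Cauchy–Schwarz: $\int\pi_{\aa+\kk}\pi_{\aa+\kk'}\le\|\pi_{\aa+\kk}\|_2\|\pi_{\aa+\kk'}\|_2$. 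Each $\|\pi_{\aa+\kk}\|_2^2=\int\pi_{\aa+\kk}^2$ is an explicit Gamma ratio $\Gamma(\theta+k)^2\prod_j\Gamma(2\alpha_j+2k_j-1)/(\prod_j\Gamma(\alpha_j+k_j)^2\Gamma(2\theta+2k-d))$. This is finite when $2\alpha_j>1$ and at most $e^{2bk\log k}$ by Stirling. The residual small-$\alpha_j$ case needs a weighted variant, e.g.\ absorbing a factor $z_j^{\alpha_j}$ into the other density. I expect this separation of variables to be the only delicate step; everything else is Stirling bookkeeping, with the convention $0\log0=0$ and constants enlarged to cover $k,k'\in\{0,1\}$.
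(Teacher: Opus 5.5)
There is a genuine gap, and it is in the step you call the key idea. Write $B(\beta):=\prod_j\Gamma(\beta_j)/\Gamma(\sum_j\beta_j)$. Your Beta-function rewriting is inverted, since from \eqref{eq:wf-bridge-C-sec2}
\[
C_{\kk,\kk'}=\frac{B(\aa+\kk+\kk')}{B(\aa+\kk)\,B(\aa+\kk')},
\]
whereas your expression $B(\aa+\kk)B(\aa+\kk')/B(\aa+\kk+\kk')$ equals $1/C_{\kk,\kk'}$. Nor is $C_{\kk,\kk'}$ proportional to $\int\pi_{\aa+\kk}\pi_{\aa+\kk'}$, which equals $B(2\aa+\kk+\kk'-\mathbf 1)/(B(\aa+\kk)B(\aa+\kk'))$. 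The correct representation is $B(\aa)\,C_{\kk,\kk'}=\int_{\Delta_d}\pi_{\aa+\kk}\pi_{\aa+\kk'}/\pi_{\aa}$, and the two coincide only when every $\alpha_j=1$. As a result, your intermediate bound $C_{\kk,\kk'}\lesssim\sup_z\pi_{\aa+\kk}(z)$ ``whenever $\alpha_j\ge1$'' is false. Take $d=2$, $\aa=(2,2)$, $\kk=\kk'=(m,0)$. Then
\[
C_{\kk,\kk'}=\frac{(m+2)^2(m+3)^2}{(2m+2)(2m+3)}\sim\frac{m^2}{4},
\qquad
\sup_z\pi_{\aa+\kk}(z)=(m+3)\Bigl(\frac{m+1}{m+2}\Bigr)^{m+1}\sim\frac{m}{e}.
\]
Taking the minimum of the two sup norms does not help, because $\kk=\kk'$. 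Your Cauchy--Schwarz branch for $\alpha_j<1$ rests on the same wrong representation, and you leave the case $\alpha_j\le 1/2$ open. So your chain does not prove the lemma, even though the final quantity it arrives at, $1/B(\aa+\kk)$, is the right one.

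The missing idea is a one-line monotonicity of the Beta function, which is essentially what the paper uses. Since $z^{\kk}\le1$ on $\Delta_d$,
\[
B(\aa+\kk+\kk')=\int_{\Delta_d}z^{\kk}\,z^{\aa+\kk'-\mathbf 1}\,\de z\le B(\aa+\kk')\le B(\aa).
\]
Hence, for every $\aa\in(0,\infty)^d$,
\[
C_{\kk,\kk'}\le \frac{1}{B(\aa+\kk)}=\frac{\Gamma(\theta+k)}{\prod_j\Gamma(\alpha_j+k_j)}\le\Bigl(\min_{x>0}\Gamma(x)\Bigr)^{-d}\,\Gamma(\theta+k),
\]
and Stirling finishes the argument with no case distinction on the $\alpha_j$. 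In your integral language this reads $\int\pi_{\aa+\kk'}\,(\pi_{\aa+\kk}/\pi_\aa)\le\sup_z\pi_{\aa+\kk}(z)/\pi_\aa(z)\le B(\aa)/B(\aa+\kk)$. Dividing by $\pi_\aa$ is exactly the weight you were looking for, and it removes all boundary singularities.

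The paper instead bounds the numerator $B(\aa+\kk+\kk')$ by a constant. It writes $\le1$, but in general only $\le B(\aa)$ holds. It then bounds $1/B(\aa+\kk)$ and $1/B(\aa+\kk')$ separately, which gives the symmetric form; the bound above is slightly sharper.

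Finally, neither argument can avoid a multiplicative constant. For $k,k'\le1$ the right-hand side of the lemma equals $1$, yet $C_{\mathbf 0,\mathbf 0}=1/B(\aa)=6$ for $\aa=(2,2)$. The pairs with $k\le1$ are also not finitely many, contrary to your first paragraph: for instance $C_{\mathbf 0,\kk'}=1/B(\aa)$ for every $\kk'$. The statement should be read as $C_{\kk,\kk'}\le K\,e^{bk\log k+bk'\log k'}$ for some constant $K=K(\aa)$. That is all the later truncation arguments, such as Lemma~\ref{lem:smoother-remainder}, actually need.
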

\begin{proof}
The results follows by noticing that
\[
\frac{\prod_{j = 1}^d\Gamma(\alpha_j +k_j + k'_j)}{\Gamma(\theta  + |\kk| + |\kk'|)} \leq 1,
\]
and by choosing $b > 0$ such that $\Gamma(\theta  + k) \leq k^{bk}$ for every $k$.
\end{proof}

We also need to recall the well-known fact that, for each fixed $t>0$ and $k\in\mathbb{N}$, $p_{k}(t)= p_{\infty, k}(t) :=\lim_{n\to\infty}p_{n,k}(t)$. We state this formally (without proof) for later reference.

\begin{lemma}
  \label{lemma: coalescent density convergence}
  For any \( t \in \real_+ \) and for any  $ k \in \integer_+ $, as $ n \rightarrow \infty$, it holds that $\ncoal{n}{k} \to \coal{k}$.
\end{lemma}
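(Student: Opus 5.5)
The statement concerns $p_{n,k}(t)=\mathbb P(N_t=k\mid N_0=n)$ for the block-counting (pure-death) process of Kingman's coalescent with death rates $\lambda_i=i(i+\theta-1)/2$, as in the proof of Proposition~\ref{prop:coal-tail}. I would argue through the hitting-time representation used there rather than the alternating series of \citet{tavare_1984}.

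\emph{Step 1: write the probability via hitting times.} Since the process is pure-death, passing through $k$ and $k+1$ means
\[
\mathbb P(N_t=k\mid N_0=n)=\mathbb P(T^{(n)}_{k}\le t<T^{(n)}_{k-1}),
\]
where $T^{(n)}_{m}=\sum_{i=m+1}^n\tau_i$ is the time of first reaching $m$ blocks. The $\tau_i\sim\mathrm{Exp}(\lambda_i)$ are independent and can be taken to be the same variables for every $n$.

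\emph{Step 2: take $n\to\infty$.} With this coupling, $T^{(n)}_m$ increases in $n$ to $T^{(\infty)}_m=\sum_{i>m}\tau_i$. This limit is almost surely finite because $\sum_i\mathbb E\tau_i=\sum_i 2/(i(i+\theta-1))<\infty$ (coming down from infinity). Hence the indicator $\mathbf 1\{T^{(n)}_{k}\le t<T^{(n)}_{k-1}\}$ converges almost surely to $\mathbf 1\{T^{(\infty)}_{k}\le t<T^{(\infty)}_{k-1}\}$. The one exception is the event that $t$ equals $T^{(\infty)}_k$ or $T^{(\infty)}_{k-1}$. Each of these is a sum of independent exponentials with an absolutely continuous law, so that event has probability zero. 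Dominated convergence then gives $p_{n,k}(t)\to\mathbb P(T^{(\infty)}_{k}\le t<T^{(\infty)}_{k-1})$.

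\emph{Step 3: identify the limit with $p_k(t)$.} The main point to check is that this limit really is $p_k(t)$ as defined in \eqref{eq:wf-transition-compact-sec2}, i.e.\ the law at time $t$ of the coalescent started from infinity. By the standard construction of the entrance law from infinity, the process started at $\infty$ can be realized exactly as $N_t=\max\{m:T^{(\infty)}_m>t\}$ (equivalently, as the monotone limit of the $n$-coalescents under the coupling above). This makes the identification immediate. It also agrees with the definition $p_{\infty,k}$ used in Proposition~\ref{prop:coal-tail}.

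As a check, Proposition~\ref{prop:coal-tail} bounds the tails uniformly in $n$. So the laws $\{p_{n,\cdot}(t)\}_n$ are tight, no mass escapes to infinity, and $\sum_k p_k(t)=1$. This gives convergence in total variation as well.
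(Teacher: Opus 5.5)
The paper gives no proof of this lemma: it records it as a ``well-known fact'' and states it ``without proof'' just before the proof of Proposition~\ref{thm:bridge-main}. Your coupling argument therefore supplies what the paper omits, and it is correct.

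With the holding times $\tau_i$ shared across $n$, $T^{(n)}_m\uparrow T^{(\infty)}_m<\infty$ almost surely. The indicators in Step~2 in fact converge everywhere. Indeed, $T^{(n)}_k\le t$ for all $n$ iff $T^{(\infty)}_k\le t$, and $t<T^{(\infty)}_{k-1}$ iff $t<T^{(n)}_{k-1}$ for all large $n$. So your null-set argument is harmless but unnecessary.

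Two small slips need fixing:
\begin{itemize}
\item For $k=0$ you need the convention $T_{-1}:=\infty$.
\item The formula in Step~3 is off by one. By your Step~1, $T_m>t$ iff $N_t>m$, so $N_t=\min\{m:T^{(\infty)}_m\le t\}=1+\max\{m:T^{(\infty)}_m>t\}$, not $\max\{m:T^{(\infty)}_m>t\}$.
\end{itemize}
With the corrected formula and $t>0$, the coupled counts satisfy $N^{(n)}_t\uparrow N^{(\infty)}_t$ and are eventually equal. The coupling inequality then gives total-variation convergence directly, without invoking Proposition~\ref{prop:coal-tail}.

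The standard route behind the paper's ``well-known'' is different. One passes to the limit term by term in the explicit alternating series for $p_{n,k}(t)$ of \citet{tavare_1984}. There $n$ enters only through factors $n_{[i]}/(n+\theta)_{(i)}\in[0,1]$ that tend to $1$, and dominated convergence is justified because the decay of $e^{-i(i+\theta-1)t/2}$ beats the growth of the coefficients. That route lands directly on the explicit Griffiths weights in \eqref{eq:wf-transition-compact-sec2}, at the cost of handling the series.

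Your route avoids the series and gives monotonicity and total-variation convergence for free. It also uses the same hitting-time representation, including the case $n=\infty$, as the proof of Proposition~\ref{prop:coal-tail}. However, your Step~3 is only as strong as the definition of $p_k(t)$. The paper itself sets $p_k(t)=\mathbb P(N_t=k\mid N_0=\infty)$ in Section~\ref{sec:probability_tails}, so your identification matches its usage. If one instead takes the transition-density weights as the definition, you would also need the classical fact \citep{griffiths_lines_1980,tavare_1984} that these weights are the time-$t$ marginals of the entrance law from infinity.
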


\begin{proof}[Proof of Proposition~\ref{thm:bridge-main}]

Let $\nn, \nn'$ be the vector of multiplicities associated to $\data{i-1}$ and $\data{i+1}$. First of all we rewrite $\post_{i | {i-1},{i+1}}( \cdot | \data{i-1},\data{i+1})$ and $\bridge{i}{{i-1}}{{i+1}}( \cdot | x^*_{{i-1}},x^*_{{i+1}})$ as
\begin{align*}
&\post_{i | {i-1},{i+1}}(z| \data{i-1},\data{i+1}) \\
&\quad= \frac{\sum_{k = 0}^n\sum_{k' = 0}^np_{n,k}(h_i)p_{n,k'}(h_{i+1}) \sum_{|\kk| = k}\binom{n}{k}^{-1}\binom{\nn}{\kk}\sum_{|\kk'| = k'}\binom{n}{k'}^{-1}\binom{\nn'}{\kk'}C_{\kk, \kk'}\pi_{\aa+\kk + \kk'}(z)}{\sum_{k = 0}^n\sum_{k' = 0}^np_{n,k}(h_i)p_{n,k'}(h_{i+1}) \sum_{|\kk| = k}\binom{n}{k}^{-1}\binom{\nn}{\kk}\sum_{|\kk'| = k'}\binom{n}{k'}^{-1}\binom{\nn'}{\kk'}C_{\kk, \kk'}}
\end{align*}
and
\begin{align*}
&\bridge{i}{{i-1}}{{i+1}}( z | x^*_{{i-1}},x^*_{{i+1}}) \\
&= \frac{\sum_{k = 0}^\infty \sum_{k' = 0}^\infty p_{k}(h_i) p_{k'}(h_{i+1})\sum_{|\kk| = k}\binom{k}{\kk}(x_{i-1}^*)^\kk\sum_{|\kk'| = k'}\binom{k'}{\kk'}(x_{i+1}^*)^{\kk'}C_{\kk, \kk'}\pi_{\aa+\kk + \kk'}(z)}{\sum_{k = 0}^\infty \sum_{k' = 0}^\infty p_{k}(h_i) p_{k'}(h_{i+1})\sum_{|\kk| = k}\binom{k}{\kk}(x_{i-1}^*)^\kk\sum_{|\kk'| = k'}\binom{k'}{\kk'}(x_{i+1}^*)^{\kk'}C_{\kk, \kk'}}.
\end{align*}
Let now $A \subset \Delta_d$ and define
\[
D_n(A) := \sum_{k = 0}^n\sum_{k' = 0}^np_{n,k}(h_i)p_{n,k'}(h_{i+1}) \sum_{|\kk| = k}\binom{n}{k}^{-1}\binom{\nn}{\kk}\sum_{|\kk'| = k'}\binom{n}{k'}^{-1}\binom{\nn'}{\kk'}C_{\kk, \kk'}\pi_{\aa+\kk + \kk'}(A)
\]
and
\[
D(A) = \sum_{k = 0}^\infty \sum_{k' = 0}^\infty p_{k}(h_i) p_{k'}(h_{i+1})\sum_{|\kk| = k}\binom{k}{\kk}(x_{i-1}^*)^\kk\sum_{|\kk'| = k'}\binom{k'}{\kk'}(x_{i+1}^*)^{\kk'}C_{\kk, \kk'}\pi_{\aa+\kk + \kk'}(A).
\]
Then in order to prove the result it suffices to show that $D_\nn(A) \to D(A)$ $\mu^{(\infty)}_{i-1,i+1}$-almost surely as $n \to \infty$ uniformly over $A$.

Fix $\epsilon > 0$. By Proposition \ref{prop:coal-tail} and Lemma \ref{lm:ineq_C} we have that
\begin{align*}
p_{n,k}(h_i)p_{n,k'}(h_{i+1}) \sum_{|\kk| = k}\binom{n}{k}^{-1}\binom{\nn}{\kk}\sum_{|\kk'| = k'}&\binom{n}{k'}^{-1}\binom{\nn'}{\kk'}C_{\kk, \kk'} \\
&\leq e^{bk\log(k)- ck^2}e^{bk'\log(k')- c(k')^2}
\end{align*}
and
\begin{align*}
p_{k}(h_i)p_{k'}(h_{i+1}) \sum_{|\kk| = k}\binom{k}{\kk}(x_{i-1}^*)^\kk\sum_{|\kk'| = k'}&\binom{k'}{\kk'}(x_{i+1}^*)^{\kk'}C_{\kk, \kk'}\\
& \leq e^{bk\log(k)- ck^2}e^{bk'\log(k')- c(k')^2},
\end{align*}
for some constant $c > 0$, which imply that there exists $M > 0$ such that
\[
\sum_{k = M+1}^n\sum_{k' = 0}^np_{n,k}(h_i)p_{n,k'}(h_{i+1}) \sum_{|\kk| = k}\binom{n}{k}^{-1}\binom{\nn}{\kk}\sum_{|\kk'| = k'}\binom{n}{k'}^{-1}\binom{\nn'}{\kk'}C_{\kk, \kk'} \leq \epsilon
\]
and
\[
\sum_{k' = M+1}^n\sum_{k = 0}^np_{n,k}(h_i)p_{n,k'}(h_{i+1}) \sum_{|\kk| = k}\binom{n}{k}^{-1}\binom{\nn}{\kk}\sum_{|\kk'| = k'}\binom{n}{k'}^{-1}\binom{\nn'}{\kk'}C_{\kk, \kk'} \leq\epsilon,
\]
as well as
\[
\sum_{k > M}\sum_{k' = 0}^\infty p_{k}(h_i)p_{k'}(h_{i+1}) \sum_{|\kk| = k}\binom{k}{\kk}(x_{i-1}^*)^\kk\sum_{|\kk'| = k'}\binom{k'}{\kk'}(x_{i+1}^*)^{\kk'}C_{\kk, \kk'} \leq \epsilon
\]
and
\[
\sum_{k' > M}\sum_{k = 0}^\infty p_{k}(h_i)p_{k'}(h_{i+1}) \sum_{|\kk| = k}\binom{k}{\kk}(x_{i-1}^*)^\kk\sum_{|\kk'| = k'}\binom{k'}{\kk'}(x_{i+1}^*)^{\kk'}C_{\kk, \kk'} \leq \epsilon.
\]
Then in order to prove $D_\nn(A) \to D(A)$ $\mu^{(\infty)}_{i-1,i+1}$-almost surely as $n \to \infty$ uniformly over $A$, it suffices to show that
\[
p_{n,k}(h_i)p_{n,k'}(h_{i+1}) \binom{n}{k}^{-1}\binom{\nn}{\kk}\binom{n}{k'}^{-1}\binom{\nn'}{\kk'} \to p_{k}(h_i)p_{k'}(h_{i+1}) \binom{k}{\kk}(x_{i-1}^*)^\kk\binom{k'}{\kk'}(x_{i+1}^*)^{\kk'},
\]
$\mu^{(\infty)}_{i-1,i+1}$-almost surely as $n \to \infty$ for every $\kk, \kk' \in \Z^d$ such that $k = |\kk| \leq M$ and $k' = |\kk'| \leq M$. This follows by combining Lemmas \ref{lm:tech_convergence} and \ref{lemma: coalescent density convergence}.
\end{proof}


\subsection{Proof of Theorem \ref{cor:marginal-smoother-empirical}}\label{app:predictive-quant}

Fix $1<i<k$, and let $\tilde{\post}_{i \given i-1, i+1}\left( z \given x,x', \data{} \right)$ denote the density of $X_i$ conditional on $X_{i-1} = x$, $X_{i+1} = x'$, and data $\data{}$ at time $i$ with multiplicities $\nn \in \Z^d$. By \eqref{eq:wf-bridge-mixture-sec2} and Bayes's theorem,
\begin{equation}\label{eq:bridge_data}
\tilde{\post}_{i \given i-1, i+1}\left( z \given x,x', \data{} \right) = \sum_{\mathbf{k} \in \integer_+^d}
\sum_{\kk' \in \integer_+^d}
\tilde{w}_{\nn, \kk, \kk'}(h_i, h_{i+1})
\pi_{\aa+\nn+\mathbf{k} + \kk'}(z),
\end{equation}
where
\begin{equation}\label{def_w_tilde}
\tilde{w}_{\nn, \kk, \kk'}(h, h') \propto r_{\nn, \kk, \kk'}(h, h') := C_{\nn, \kk, \kk'}p_{\mathbf{k}}^{x}(h)p_{\kk'}^{x'}(h')
\end{equation}
and
\begin{equation}\label{def_C_n}
C_{\nn, \kk, \kk'} = \frac{\Gamma(\theta + |\kk|)}{\prod_{j = 1}^d\Gamma(\alpha_j +k_j)}\frac{\Gamma(\theta + |\kk'|)}{\prod_{j = 1}^d\Gamma(\alpha_j +k'_j)}\frac{\Gamma(\theta + |\nn|)}{\prod_{j = 1}^d\Gamma(\alpha_j +n_j)}\frac{\prod_{j = 1}^d\Gamma(\alpha_j +n_j +k_j + k'_j)}{\Gamma(\theta + |\nn| + |\kk| + |\kk'|)}.
\end{equation}
We analogously define $\tilde{\post}_{i \given i-1}\left( z \given x, \data{} \right)$ and $\tilde{\post}_{i \given i+1}\left( z \given x', \data{} \right)$ when conditioning only on the past or future latent state.

We need several technical lemmas.
\begin{lemma}\label{lm:ineq_C_n}
Let $C_{\nn, \kk, \kk'}$ as in \eqref{def_C_n}. Then there exists $b > 0$ such that
\[
C_{\nn, \kk, \kk'} \leq e^{bk\log(k) + bk'\log(k')},
\]
for every $\nn$, $\kk, \kk' \in \Z^d$ such that $|\kk| = k$ and $|\kk'| = k'$.
\end{lemma}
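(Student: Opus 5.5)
The argument mirrors Lemma~\ref{lm:ineq_C}. I would split $C_{\nn,\kk,\kk'}$ in \eqref{def_C_n} into two factors. The first is
\[
F_1:=\frac{\Gamma(\theta+|\kk|)}{\prod_{j}\Gamma(\alpha_j+k_j)}\,\frac{\Gamma(\theta+|\kk'|)}{\prod_{j}\Gamma(\alpha_j+k'_j)},
\]
which does not involve $\nn$. The second is
\[
F_2:=\frac{\Gamma(\theta+|\nn|)\prod_j\Gamma(\alpha_j+n_j+k_j+k'_j)}{\prod_j\Gamma(\alpha_j+n_j)\,\Gamma(\theta+|\nn|+|\kk|+|\kk'|)}.
\]
The aim is to show $F_2\le 1$ uniformly in $\nn,\kk,\kk'$, and that $F_1\le e^{bk\log k+bk'\log k'}$.

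For $F_2$, I would use the Dirichlet moment identity
\[
F_2=\mathbb E_{\pi_{\aa+\nn}}\Bigl[\prod_j X_j^{k_j+k'_j}\Bigr].
\]
This holds because $\Gamma(\beta_j+m_j)/\Gamma(\beta_j)$ over $\Gamma(\beta_0+|m|)/\Gamma(\beta_0)$ is exactly the mixed moment of a Dirichlet$(\beta)$ vector, applied with $\beta=\aa+\nn$ and $m=\kk+\kk'$. Since $X\in\Delta_d$, the integrand is at most $1$, so $F_2\le1$ for every $\nn$. The same moment argument also yields the inequality used in the proof of Lemma~\ref{lm:ineq_C}, which is the case $\nn=0$.

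For $F_1$, each denominator term satisfies $\Gamma(\alpha_j+k_j)\ge \min_{x>0}\Gamma(x)=:\gamma_0>0$, where $\gamma_0\approx0.8856$. Hence
\[
\frac{\Gamma(\theta+k)}{\prod_j\Gamma(\alpha_j+k_j)}\le \gamma_0^{-d}\,\Gamma(\theta+k).
\]
Stirling's formula then gives $\gamma_0^{-d}\Gamma(\theta+k)\le e^{bk\log k}$ for all $k\ge2$ and a suitable $b$ depending on $\theta$ and $d$.

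The only real obstacle is the small cases $k\in\{0,1\}$, where $k\log k=0$ and the right-hand side equals $1$ (with the convention $0\log0=0$). There the bound on $F_1$ can fail, since $\Gamma(\theta)/\prod_j\Gamma(\alpha_j)$ need not be at most $1$. I would handle this the way Lemma~\ref{lm:ineq_C} implicitly does. Either read the bound as $e^{b(1+k\log k)+b(1+k'\log k')}$, or equivalently use a multiplicative constant: $C_{\nn,\kk,\kk'}\le B\,e^{bk\log k+bk'\log k'}$ with $B$ depending only on $\aa$. The constant $B$ is harmless in every downstream use, which only combines this estimate with the $e^{-ck^2}$ tails from Proposition~\ref{prop:coal-tail}. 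Equivalently, one may enlarge $b$ and replace $k\log k$ by $(k+1)\log(k+2)$. The essential content is the $\nn$-uniformity, which comes from $F_2\le 1$.
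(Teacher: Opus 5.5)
Your proof is correct and is essentially the paper's: the $\nn$-dependent factor is the Dirichlet moment ratio $\int x^{\nn+\kk+\kk'}\pi_{\aa}(x)\,\de x/\int x^{\nn}\pi_{\aa}(x)\,\de x=\mathbb E_{\pi_{\aa+\nn}}[X^{\kk+\kk'}]\le 1$, and the remaining $\nn$-free factor is controlled by a Stirling bound on $\Gamma(\theta+k)$ as in Lemma~\ref{lm:ineq_C}. Your extra care with $\Gamma(\alpha_j+k_j)\ge\gamma_0$ and with $k,k'\in\{0,1\}$ is genuinely needed, and the paper's proof glosses over it: for instance $C_{\nn,\mathbf 0,\mathbf 0}=(\Gamma(\theta)/\prod_j\Gamma(\alpha_j))^2=36$ for $\aa=(2,2)$, so the literal bound fails and a multiplicative constant is required, which the constant $K$ in Lemma~\ref{lem:smoother-remainder} absorbs; the only slip is your aside that the case $\nn=0$ gives the inequality used in Lemma~\ref{lm:ineq_C}, since it gives it only up to the factor $\prod_j\Gamma(\alpha_j)/\Gamma(\theta)$, which is harmless for the same reason.
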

\begin{proof}
Notice that
\[
\frac{\Gamma(\theta  + |\nn|)}{\prod_{j = 1}^d\Gamma(\alpha_j +n_j)}\frac{\prod_{j = 1}^d\Gamma(\alpha_j +n_j +k_j + k'_j)}{\Gamma(\theta  + |\nn| + |\kk| + |\kk'|)} = \frac{\int_{\Delta_d} x^{\nn + \kk + \kk'} \pi_{\alpha}(x)\, \text{d}x}{\int_{\Delta_d} x^{\nn} \pi_{\alpha}(x)\, \text{d}x} \leq 1,
\]
so that the result follows by reasoning as in Lemma \ref{lm:ineq_C}.
\end{proof}
\begin{lemma}\label{lem:smoother-remainder}
Let $\tilde{w}_{\nn, \kk, \kk'}$ be as in \eqref{def_w_tilde}. Then for every $h$ and $h'$ positive constants there exist $B_{h, h'} > 0$ and $C_{h, h'} > 0$ such that for every $M > 1$ we have that
\begin{equation}\label{ineq_w_tilde}
\sum_{|\kk| > M}
\sum_{|\kk'| \geq 0}
\tilde{w}_{\nn, \kk, \kk'}(h, h')
+ \sum_{|\kk'| > M}
\sum_{|\kk| \geq 0}
\tilde{w}_{\nn, \kk, \kk'}(h, h') < C_{h, h'}e^{-B_{h, h'}M^2},
\end{equation}
for every $\nn \in \Z^d$ and $(x, x') \in \Delta_d^2$.
\end{lemma}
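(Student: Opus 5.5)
The plan is to bound the normalised weights by a ratio: an unnormalised tail sum in the numerator, and the single term $\kk=\kk'=\mathbf{0}$ as a lower bound for the normalising constant. Let
\[
Z_{\nn}(x,x'):=\sum_{\kk,\kk'\in\Z_+^d} r_{\nn,\kk,\kk'}(h,h'),
\qquad
\tilde w_{\nn,\kk,\kk'}(h,h')=\frac{r_{\nn,\kk,\kk'}(h,h')}{Z_{\nn}(x,x')}.
\]
Each of the two sums on the left of \eqref{ineq_w_tilde} is at most the corresponding tail of $r$ divided by $r_{\nn,\mathbf 0,\mathbf 0}(h,h')$. It therefore suffices to prove two things:
\begin{itemize}
\item a lower bound on $r_{\nn,\mathbf 0,\mathbf 0}$ that holds uniformly in $\nn,x,x'$;
\item an upper bound on the unnormalised tails that is uniform in the same quantities.
\end{itemize}

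\emph{Lower bound.} Put $\kk=\kk'=\mathbf 0$ in \eqref{def_C_n}. The last two factors cancel exactly, so
\[
C_{\nn,\mathbf 0,\mathbf 0}=\Big(\Gamma(\theta)\big/\textstyle\prod_{j}\Gamma(\alpha_j)\Big)^2,
\]
which does not depend on $\nn$. Since $p_{\mathbf 0}^x(h)=p_0(h)$ for every $x$, we get
\[
r_{\nn,\mathbf 0,\mathbf 0}=C_{\mathbf 0,\mathbf 0}\,p_0(h)p_0(h')=:\kappa_{h,h'}.
\]
This constant is strictly positive, because $p_0(t)>0$ for every $t>0$ when $\theta>0$: all lineages are lost by time $t$ with positive probability, through the mutation component of the death rates $i(i+\theta-1)/2$.

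\emph{Upper bound.} I apply Lemma~\ref{lm:ineq_C_n}, which gives $C_{\nn,\kk,\kk'}\le e^{bk\log k+bk'\log k'}$ uniformly in $\nn$. Then I sum over $\kk$ with $|\kk|=k$. Because $\binom{k}{\kk}x^{\kk}$ is a multinomial probability mass function, $\sum_{|\kk|=k}p^x_{\kk}(h)=p_k(h)$, and this removes all dependence on $x$ (and likewise on $x'$). This yields
\[
\sum_{|\kk|>M}\sum_{\kk'} r_{\nn,\kk,\kk'}
\le \Big(\sum_{k>M} e^{bk\log k}p_k(h)\Big)\Big(\sum_{k'\ge0}e^{bk'\log k'}p_{k'}(h')\Big).
\]
By Proposition~\ref{prop:coal-tail} (with $n=\infty$), $p_k(h)\le e^{-c_hk^2h}$.
\begin{itemize}
\item The second factor is then a finite constant $S_{h'}$.
\item In the first factor, $bk\log k\le \tfrac12 c_hhk^2$ for $k$ large, so $e^{bk\log k-c_hhk^2}\le e^{-\frac12 c_hhk^2}$ for $k\ge k_0$. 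The first factor is therefore at most $\sum_{k>M}e^{-\frac12c_hhk^2}\le C e^{-\frac12 c_h h M^2}$.
\item For the finitely many $M<k_0$, the bound is absorbed by enlarging the constant.
\end{itemize}
The symmetric term with $|\kk'|>M$ is handled identically. Dividing by $\kappa_{h,h'}$ gives \eqref{ineq_w_tilde}, with $B_{h,h'}=\tfrac12\min(c_hh,c_{h'}h')$ and a suitable constant $C_{h,h'}$.

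\emph{Main obstacle.} The only delicate point is uniformity in $\nn$, and it enters in two places. In the numerator it is handled by the bound $\le1$ on the Dirichlet moment ratio already proved in Lemma~\ref{lm:ineq_C_n}. In the denominator it rests on the exact cancellation at $\kk=\kk'=\mathbf 0$. A lower bound on $Z_\nn$ from any other term would degrade as $|\nn|\to\infty$.
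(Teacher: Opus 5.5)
Your proposal is correct and follows the paper's proof. The normalising constant is bounded below by the $\kk=\kk'=\mathbf 0$ term $p_0(h)p_0(h')\bigl(\Gamma(\theta)/\prod_j\Gamma(\alpha_j)\bigr)^2$, uniformly in $\nn,x,x'$, and the unnormalised tails are controlled by Lemma~\ref{lm:ineq_C_n} together with Proposition~\ref{prop:coal-tail}; your explicit use of $\sum_{|\kk|=k}p^x_{\kk}(h)=p_k(h)$ and your treatment of bounded $M$ simply make precise two steps the paper leaves implicit.
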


\begin{proof}
By \eqref{def_w_tilde} we have that, for $r_{\nn, \kk, \kk'}$ as in \eqref{def_w_tilde},
\begin{equation}\label{eq:lower_normalizing_const}
\sum_{\mathbf{k} \in \integer_+^d}
\sum_{\kk' \in \integer_+^d}
r_{\nn, \kk, \kk'}(h, h') \geq p_{0}(h)p_{0}(h')\left(\frac{\Gamma(\theta )}{\prod_{j = 1}^d\Gamma(\alpha_j)}\right)^2 =: r > 0,
\end{equation}
which implies that the normalizing constant of $\{r_{\nn, \kk, \kk'}(h, h') \}_{\kk, \kk'}$ is uniformly bounded away from zero.

Moreover, by Proposition~\ref{prop:coal-tail} and Lemma~\ref{lm:ineq_C_n} we have that
\[
C_{\nn, \kk, \kk'}p_{\mathbf{k}}(h)p_{\kk'}(h') \leq e^{-ck^2h+bk\log(k)}e^{-c(k')^2h'+bk'\log(k')} \leq Ke^{-sk^2- s(k')^2},
\]
for suitable constants $K>0$ and $s > 0$ depending on $h$ and $h'$. Since
\[
\sum_{k = 0}^\infty e^{-sk^2} < \infty,
\]
we have that
\[
\begin{aligned}
\sum_{|\kk| > M}
\sum_{|\kk'| \geq 0}
\tilde{w}_{\nn, \kk, \kk'}(h, h') &\leq \frac{K}{r}\left(\sum_{k' = 0}^\infty e^{-s(k')^2}\right)\sum_{k > M}e^{-sk^2}\\
& \leq \frac{K}{r}\left(\sum_{k' = 0}^\infty e^{-s(k')^2}\right)\left(\sum_{k > M}e^{-sk^2+sM^2}\right)e^{-sM^2},
\end{aligned}
\]
so that the result holds with $B_{h, h'} = s$ and 
\[
C_{h, h'} = \frac{K}{r}\left(\sum_{k' = 0}^\infty e^{-s(k')^2}\right)^2.
\]
\end{proof}
\begin{lemma}\label{lem:dirichlet-concentration}
Let $\beta\in(0,\infty)^d$ and $Z\sim\pi_\beta$. Let $m=\beta/\beta_0$, where $\beta_0=\sum_j\beta_j$. If $\|m-x\|_2\le \delta/2$, then
\[
\prob{\|Z-x\|_2\ge \delta}\le \frac{4}{(\beta_0+1)\delta^2}.
\]
\end{lemma}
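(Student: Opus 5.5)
The plan is a Chebyshev argument centred at the mean, followed by a triangle inequality that moves from $m$ to $x$. The hypothesis $\|m-x\|_2\le\delta/2$ is only used to absorb that shift at the cost of halving the radius.

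First, on the event $\{\|Z-x\|_2\ge\delta\}$ we have
\[
\|Z-m\|_2\ge \|Z-x\|_2-\|m-x\|_2\ge \delta-\tfrac{\delta}{2}=\tfrac{\delta}{2}.
\]
Hence
\[
\prob{\|Z-x\|_2\ge\delta}\le\prob{\|Z-m\|_2\ge \delta/2}.
\]

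Second, we apply Markov's inequality to $\|Z-m\|_2^2$:
\[
\prob{\|Z-m\|_2\ge\delta/2}\le \frac{4\,\mean{\|Z-m\|_2^2}}{\delta^2}.
\]

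Third, we compute the second moment exactly, as in the proof of Lemma~\ref{lem:dirichlet-mean-dev}. Using $\mathrm{Var}(Z_j)=m_j(1-m_j)/(\beta_0+1)$, we get
\[
\mean{\|Z-m\|_2^2}=\sum_{j=1}^d \mathrm{Var}(Z_j)=\frac{1-\sum_{j}m_j^2}{\beta_0+1}\le\frac{1}{\beta_0+1}.
\]
Substituting this into the Markov bound gives the claim, namely $\prob{\|Z-x\|_2\ge\delta}\le 4/\bigl((\beta_0+1)\delta^2\bigr)$.

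There is no real obstacle. The only points to check are that the Dirichlet variance formula is valid for arbitrary positive parameters $\beta$, which it is, and that the constant $4$ comes out correctly from the halved radius $\delta/2$.
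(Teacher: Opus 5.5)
Your proposal is correct and follows the same route as the paper. It uses the event inclusion $\{\|Z-x\|_2\ge\delta\}\subseteq\{\|Z-m\|_2\ge\delta/2\}$, then Markov's inequality applied to $\|Z-m\|_2^2$, then the Dirichlet variance formula. Your last step is actually the right way to close the argument: bounding the whole sum $\sum_j \mathrm{Var}(Z_j)=(1-\sum_j m_j^2)/(\beta_0+1)\le 1/(\beta_0+1)$ gives the dimension-free constant, whereas the paper's closing remark that each $\mathrm{Var}(Z_j)\le(\beta_0+1)^{-1}$ would on its own only give a bound with an extra factor $d$.
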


\begin{proof}
If $\|m-x\|_2\le \delta/2$, then
\[
\{\|Z-x\|_2\ge \delta\}\subseteq \{\|Z-m\|_2\ge \delta/2\}.
\]
Hence, by Markov's inequality,
\[
\prob{\|Z-x_i^*\|_2\ge \delta}
\le
\prob{\|Z-m\|_2\ge \delta/2}
\le
\frac{4}{\delta^2}\sum_{j=1}^d \mathrm{Var}(Z_j).
\]
Now
\[
\mean{\|Z-m\|_2^2}
=
\sum_{j=1}^d \mean{(Z_j-m_j)^2}
=
\sum_{j=1}^d \mathrm{Var}(Z_j).
\]
The result follows from the fact that $\mathrm{Var}(Z_j)\leq(\beta_0+1)^{-1}$.
\end{proof}

We now state the main result of this section in the next corollary.
\begin{corollary}\label{cor:local-conditional-empirical}
Fix $r>0$. Then there exists a constant $A_{h_i, h_{i+1}}$ such that
\[
\sup_{(x,x')\in\Delta_d^2}
\int_{\{z\in\Delta_d:\|z-\hat x_i\|_2\ge r/\sqrt{n}\}}
\tilde{\post}_{i\given i-1,i+1}(z\given x,x',\data{i})\,\de z
\le
\frac{A_{h_i, h_{i+1}}}{n}+\frac{16}{r^2},
\]
for every $n > N$, with $N := N(\theta, r)$.
\end{corollary}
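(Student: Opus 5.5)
We start from the mixture representation \eqref{eq:bridge_data}, which writes the conditional density as $\sum_{\kk,\kk'}\tilde w_{\nn,\kk,\kk'}(h_i,h_{i+1})\,\pi_{\aa+\nn+\kk+\kk'}(z)$, with $\nn=\nn_i$ and $|\nn|=n$. For a fixed truncation level $M$ to be chosen later, we split the integral over $\{\|z-\hat x_i\|_2\ge r/\sqrt n\}$ into two parts. The first collects the terms with $|\kk|>M$ or $|\kk'|>M$. Since each Dirichlet probability is at most one, this part is bounded by the tail in Lemma~\ref{lem:smoother-remainder}, namely $C_{h_i,h_{i+1}}e^{-B_{h_i,h_{i+1}}M^2}$. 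That bound holds uniformly in $\nn$ and in $(x,x')\in\Delta_d^2$, which is exactly what the supremum in the statement requires. The second part is a sub-convex combination, with total weight at most one, of the probabilities $\pi_{\aa+\nn+\kk+\kk'}(\|Z-\hat x_i\|_2\ge r/\sqrt n)$ over $|\kk|,|\kk'|\le M$. It therefore suffices to bound these uniformly over that finite index range.

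For the retained terms we apply Lemma~\ref{lem:dirichlet-concentration} with $\beta=\aa+\nn+\kk+\kk'$, $x=\hat x_i$ and $\delta=r/\sqrt n$. Then $\beta_0=\theta+n+|\kk|+|\kk'|\ge n$. The mean is $m=(\aa+\nn+\kk+\kk')/\beta_0$, and the computation in \eqref{eq:to_use_later} extends to give
\[
\|m-\hat x_i\|_2\le\|m-\hat x_i\|_1\le \frac{2(\theta+2M)}{n+\theta+2M}.
\]
The condition $\|m-\hat x_i\|_2\le\delta/2=r/(2\sqrt n)$ then holds as soon as $4(\theta+2M)\le r\sqrt n$. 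Under that condition the lemma gives
\[
\pi_{\beta}\bigl(\|Z-\hat x_i\|_2\ge r/\sqrt n\bigr)\le \frac{4n}{(n+1)r^2}\le \frac{4}{r^2}.
\]
This is within the $16/r^2$ allowed in the statement, and the extra slack can absorb a cruder bound on the mean shift if needed.

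It remains to choose $M$ as a function of $n$. We take $M=M_n\asymp\sqrt{\log n}$, more precisely $M_n^2=B_{h_i,h_{i+1}}^{-1}\log n$. The tail then becomes $C_{h_i,h_{i+1}}e^{-B M_n^2}=C_{h_i,h_{i+1}}/n$, which gives the term $A_{h_i,h_{i+1}}/n$ with $A=C_{h_i,h_{i+1}}$. The mean-shift condition becomes $4(\theta+2M_n)\le r\sqrt n$. Since $M_n=O(\sqrt{\log n})$, this holds for all $n>N(\theta,r)$. Strictly, $N$ also depends on $B_{h_i,h_{i+1}}$, which is harmless because the gaps are fixed. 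Lemma~\ref{lem:smoother-remainder} requires $M>1$, which also holds for large $n$.

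The main obstacle is this balancing step. A fixed $M$ would make the tail a constant rather than $O(1/n)$. So $M$ must grow, yet slowly enough that the Dirichlet means, which are shifted by the $\kk+\kk'$ pseudo-counts, stay within $r/(2\sqrt n)$ of $\hat x_i$. The Gaussian-type decay $e^{-cM^2}$ from Proposition~\ref{prop:coal-tail}, carried through Lemma~\ref{lem:smoother-remainder}, is what allows $M\sim\sqrt{\log n}$ to work. Uniformity over $(x,x')$ and over the data $\nn$ comes for free, because both the tail bound and the Dirichlet concentration estimate hold regardless of $x$, $x'$ and $\nn$.
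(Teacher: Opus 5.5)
Your proof is correct and follows essentially the same route as the paper. Truncate the two-sided mixture with Lemma~\ref{lem:smoother-remainder} at level $M\asymp\sqrt{\log n/B_{h_i,h_{i+1}}}$, so the tail is $O(1/n)$ uniformly in $(x,x')$ and $\nn$; control the mean shift of the retained Dirichlet components as in \eqref{eq:to_use_later}; and apply Lemma~\ref{lem:dirichlet-concentration}. Your direct application of that lemma at $\delta=r/\sqrt n$ gives the sharper constant $4/r^2$ in place of $16/r^2$, and you are right that the threshold $N$ also depends on $B_{h_i,h_{i+1}}$ through the choice of $M$.
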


\begin{proof}
Fix $(x,x')\in\Delta_d^2$. By \eqref{eq:bridge_data} and Lemma \ref{lem:smoother-remainder} we have that
\[
\begin{aligned}
&\int_{\{z:\|z-\hat x_i\|_2\ge r/\sqrt{n}\}}
\tilde{\post}_{i\given i-1,i+1}(z\given x,x',\data{i})\,\de z\\
&\, \le
\sum_{|\kk|\le M}\sum_{|\kk'|\le M}\tilde{w}_{\nn, \kk, \kk'}(h_i, h_{i+1})
\int_{\{z:\|z-\hat x_i\|_2\ge r/\sqrt{n}\}}
\pi_{\aa+\nn+\kk+\kk'}(z)\,\de z
+ C_{h, h'}e^{-B_{h, h'}M^2},
\end{aligned}
\]
for every $M> 1$.
For fixed $\kk,\kk'$ with $|\kk|,|\kk'|\le M$, let
\[
\beta=\aa+\nn+\kk+\kk',\qquad
\beta_0=n+\theta+|\kk|+|\kk'|,\qquad
m=\frac{\beta}{\beta_0}.
\]
Reasoning as in \eqref{eq:to_use_later} we obtain that
\[
\|m-\hat x_i\|_1\le \frac{2(\theta+|\kk|+|\kk'|)}{n+\theta+|\kk|+|\kk'|}
\le \frac{2(\theta+2M)}{n+\theta}.
\]
Choose now $M = \lceil \sqrt{\log n}/\sqrt{B_{h, h'}}  +1 \rceil $. Therefore by the above calculations we get that
\[
\|m-\hat x_i\|_2\le\frac{r}{2\sqrt{n}},
\]
for $n$ bigger than some $N$ depending only on $\theta$ and $r$. Then
\[
\left\{z:\|z-\hat x_i\|_2\ge \frac{r}{\sqrt{n}} \right\}\subseteq \left\{z:\|z-m\|_2\ge\frac{r}{2\sqrt{n}}\right\}.
\]
Applying Lemma~\ref{lem:dirichlet-concentration} with $x$ replaced by $\hat x_i$ gives
\[
\int_{\{z:\|z-\hat x_i\|_2\ge r/\sqrt{n}\}}
\pi_{\aa+\nn+\kk+\kk'}(z)\,\de z
\le \frac{16n}{(n+\theta+1)r^2}.
\]
The result then follows with $A_{h,h'} = C_{h, h'}$.
\end{proof}

\begin{proof}[Proof of Theorem~\ref{cor:marginal-smoother-empirical}]
By the HMM structure and the Markov property,
\[
\post_{i\mid 1:k}(z\mid \data{1:k})
=
\int_{\Delta_d}\int_{\Delta_d}
\tilde{\post}_{i\given i-1,i+1}(z\given x,x',\data{i})\,
\post_{i-1,i+1\mid 1:k}(x,x'\mid \data{1:k})\,\de x\,\de x',
\]
where $\post_{i-1, i+1 | {1:k}}( x, x' | \data{1:k})$ is the joint density of the signal at times $t_{i-1}$ and $t_{i+1}$ conditional on $\data{1:k}$. Therefore, for the set
\[
B_r:= \left\{z\in\Delta_d:\|z-\hat x_i\|_2\ge \frac{r}{\sqrt{n}}\right\},
\]
we have
\[
\begin{aligned}
\post_{i\mid 1:k}(B_r\mid \data{1:k})
&=
\int_{\Delta_d}\int_{\Delta_d}
\left\{\int_{B_r}
\tilde{\post}_{i\given i-1,i+1}(z\given x,x',\data{i})\,\de z\right\}
\post_{i-1,i+1\mid 1:k}(x,x'\mid \data{1:k})\,\de x\,\de x'\\
&\le
\sup_{(x,x')\in\Delta_d^2}
\int_{B_r}
\tilde{\post}_{i\given i-1,i+1}(z\given x,x',\data{i})\,\de z.
\end{aligned}
\]
The result then follows by Corollary~\ref{cor:local-conditional-empirical}. 
\end{proof}

\subsection{Proof of Theorem~\ref{thm:consistency-bvm}}\label{app:bvm-aux}

For $\delta>0$ and $x^*\in\Delta_d$, let
\begin{equation}\label{def:set_A}
A_{\delta, x^*} := \left\{ x \in \Delta_d :  \|x-x^*\|_2 \geq \delta\right\},
\end{equation}
where $\|\cdot\|_2$ denotes the Euclidean norm on $\mathbb{R}^d$. We first need a technical result.
\begin{proposition}\label{prop:convergence}
Let $\delta > 0$. Then
\begin{equation}\label{eq:first_conv}
\sup_{(x, x') \in \Delta_d^2} \, \int_{A_{\delta, x_i^*}}\tilde{\post}_{i \given i-1, i+1}\left( z \given x,x', \data{} \right)\,\de z \to 0,
\end{equation}
as well as
\[
\sup_{x \in \Delta_d} \, \int_{A_{\delta, x_i^*}}\tilde{\post}_{i \given i-1}\left( z \given x', \data{} \right)\,\de z \to 0,
\]
and
\[
\sup_{x' \in \Delta_d} \, \int_{A_{\delta, x_i^*}}\tilde{\post}_{i \given i+1}\left( z \given x', \data{} \right)\,\de z \to 0
\]
$\mu_i^\infty$-a.s.\ as $n \to \infty$.
\end{proposition}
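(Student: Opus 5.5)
The plan is to reuse almost verbatim the argument of Corollary~\ref{cor:local-conditional-empirical}, replacing the empirical centre $\hat x_i$ by the true value $x_i^*$ and the shrinking radius $r/\sqrt n$ by a fixed $\delta$. Let $\nn$ be the multiplicity vector of $\data{i}$. By \eqref{eq:bridge_data} the conditional density $\tilde\post_{i\given i-1,i+1}(\cdot\given x,x',\data{})$ is a mixture over $(\kk,\kk')$ of the Dirichlet laws $\pi_{\aa+\nn+\kk+\kk'}$ with weights $\tilde w_{\nn,\kk,\kk'}$. For any $M>1$, Lemma~\ref{lem:smoother-remainder} bounds the weight mass with $|\kk|>M$ or $|\kk'|>M$ by $C_{h_i,h_{i+1}}e^{-B_{h_i,h_{i+1}}M^2}$. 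This bound is uniform in $\nn$ and in $(x,x')\in\Delta_d^2$, so it survives the supremum. Hence
\[
\sup_{(x,x')}\int_{A_{\delta,x_i^*}}\tilde\post_{i\given i-1,i+1}(z\given x,x',\data{})\,\de z
\le \max_{|\kk|,|\kk'|\le M}\pi_{\aa+\nn+\kk+\kk'}(A_{\delta,x_i^*}) + C_{h_i,h_{i+1}}e^{-B_{h_i,h_{i+1}}M^2}.
\]

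For the first term, write $\beta=\aa+\nn+\kk+\kk'$, $\beta_0=n+\theta+|\kk|+|\kk'|$ and $m=\beta/\beta_0$. The computation in \eqref{eq:to_use_later} gives $\|m-\hat x_i\|_1\le 2(\theta+2M)/(n+\theta)$, uniformly over $|\kk|,|\kk'|\le M$. By the strong law of large numbers under $\mu_i^{(\infty)}$ we have $\hat x_i\to x_i^*$ almost surely. So on that almost sure event, with $M$ fixed, for $n$ large enough $\|m-x_i^*\|_2\le\|m-\hat x_i\|_1+\|\hat x_i-x_i^*\|_2\le\delta/2$. Lemma~\ref{lem:dirichlet-concentration} then gives $\pi_\beta(A_{\delta,x_i^*})\le 4/((n+\theta+1)\delta^2)\to0$.

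To conclude, fix $\epsilon>0$ and choose $M$ so that the tail term is below $\epsilon$. The limsup of the supremum is then at most $\epsilon$ almost surely. Letting $\epsilon\downarrow0$ along a countable sequence keeps the exceptional set null and gives \eqref{eq:first_conv}.

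The one-sided statements follow the same way. The laws $\tilde\post_{i\given i-1}(\cdot\given x,\data{})$ and $\tilde\post_{i\given i+1}(\cdot\given x',\data{})$ are single-index mixtures $\sum_{\kk}\tilde w_{\nn,\kk}\,\pi_{\aa+\nn+\kk}$, with weights proportional to $C_{\nn,\kk}\,p^x_{\kk}(h)$. Here $C_{\nn,\kk}$ is the analogue of \eqref{def_C_n} without $\kk'$. The analogue of Lemma~\ref{lm:ineq_C_n}, namely $C_{\nn,\kk}\le e^{bk\log k}$, holds by the same Dirichlet-moment ratio argument. The normalising constant is at least $p_0(h)\,\Gamma(\theta)/\prod_j\Gamma(\alpha_j)>0$. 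Together with Proposition~\ref{prop:coal-tail}, these give the tail bound $Ce^{-BM^2}$ uniformly in $\nn$ and $x$. The rest of the argument is unchanged.

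The main obstacle is the uniformity of the truncation in the conditioning endpoints and in the data. Specifically, one must make sure the lower bound on the normalising constant, which comes from the $\kk=\kk'=0$ term, does not degenerate. That term equals $p_0(h)p_0(h')\,\bigl(\Gamma(\theta)/\prod_j\Gamma(\alpha_j)\bigr)^2$ times the factor $\Gamma(\theta+n)\prod_j\Gamma(\alpha_j+n_j)/\bigl(\prod_j\Gamma(\alpha_j+n_j)\,\Gamma(\theta+n)\bigr)=1$, so it does not depend on $\nn$. This is exactly what Lemma~\ref{lem:smoother-remainder} records. Once it is in place, the remaining steps are the finite-dimensional Dirichlet concentration above.
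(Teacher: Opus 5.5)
Your proposal is correct and follows essentially the same route as the paper. Both arguments truncate the mixture weights using Lemma~\ref{lem:smoother-remainder}, which is uniform in $(x,x')$ and $\nn$, and then show that finitely many Dirichlet probabilities $\pi_{\aa+\nn+\kk+\kk'}(A_{\delta,x_i^*})$ go to zero almost surely. The paper calls that last step clear; you spell it out via the mean-shift bound from \eqref{eq:to_use_later}, the strong law for $\hat x_i$, and Lemma~\ref{lem:dirichlet-concentration}.
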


\begin{proof}
We prove \eqref{eq:first_conv}; the other two limits are analogous. Fix $\epsilon > 0$. By Lemma~\ref{lem:smoother-remainder} there exists $M > 0$, independent of $x$ and $x'$, such that
\[
\begin{aligned}
\sup_{(x, x') \in \Delta_d^2} \, &\int_{A_{\delta, x_i^*}}\tilde{\post}_{i \given i-1, i+1}( z \given x,x',  \data{} ) \, \de z \\
\leq&\, \sum_{|\kk| \leq M}
   \sum_{|\kk'| \leq  M}\int_{A_{\delta, x_i^*}}\pi_{\aa+\nn+\mathbf{k} + \kk'}(z)\,\de z + \frac{\epsilon}{2}.
\end{aligned}
\]
It is clear that for every $(\kk, \kk')$ such that $|\kk| \leq M$ and $|\kk'| \leq M$ we have that
\[
\int_{A_{\delta, x_i^*}}\pi_{\aa+\nn+\kk+\kk'}(z)\,\de z \to 0
\]
$\mu_i^\infty$-almost surely as $n \to \infty$. The result follows by arbitrariness of $\epsilon$.
\end{proof}

We now prove the first part of Theorem \ref{thm:consistency-bvm}.
\begin{proof}[Proof of \eqref{joint-consistency} in Theorem \ref{thm:consistency-bvm}]
Let $(Z^{(n)}_1, \dots, Z^{(n)}_k)$ be a random vector with probability density $\post_{{1:k} | {1:k}}( \cdot | \data{1:k})$. Then we prove the stronger statement that $Z_i^{(n)} \to x_i^*$ in probability $\mu^{(\infty)}_{1:k}$-a.s. as $n \to \infty$, for every $i = 1, \dots, k$.

Fix $1 < i <k$, $\delta > 0$ and let $\post_{i | {1:k}}( \cdot | \data{1:k})$ be the density of $Z_i^{(n)}$. Then we have to show that
\[
\int_{A_{\delta, x_i^*}}\post_{i | {1:k}}( z | \data{1:k}) \, \de z \to 0,
\]
$\mu^{(\infty)}_{1:k}$-a.s. as $n \to \infty$. By the Markov property and the HMM structure, we can write
\[
\post_{i | {1:k}}( z | \data{1:k}) = \int_{\Delta_d}\int_{\Delta_d}\tilde{\post}_{i \given i-1, i+1}( z \given x,x', \data{i} )\post_{i-1, i+1 | {1:k}}( x, x' | \data{1:k}) \, \de x \de x',
\]
with $\tilde{\post}_{i \given i-1, i+1}( z \given x,x', \data{i} )$ as in \eqref{eq:bridge_data} and where $\post_{i-1, i+1 | {1:k}}( x, x' | \data{1:k})$ is the joint density of the signal at times $t_{i-1}$ and $t_{i+1}$ conditional on $\data{1:k}$. By Proposition \ref{prop:convergence} we have that
\[
\int_{A_{\delta, x_i^*}}\post_{i | {1:k}}( z | \data{1:k}) \, \de z \leq \sup_{(x, x') \in \Delta_d^2} \, \int_{A_{\delta, x_i^*}}\tilde{\post}_{i \given i-1, i+1}( z \given x,x', \data{i} ) \, \de z \to 0,
\]
$\mu^{\infty}_{i}$-a.s. as $n \to \infty$.
If now $i = 1$ or $i = k$, the result follows analogously by replacing $\tilde{\post}_{i \given i-1, i+1}( z \given x,x', \data{} )$ with $\tilde{\post}_{i \given i+1}( z \given x', \data{i} )$ or $\tilde{\post}_{i \given i-1}( z \given x, \data{i} )$ respectively.
\end{proof}

The proof of the second part of Theorem~\ref{thm:consistency-bvm} hinges on the asymptotic behaviour of the Dirichlet components appearing in the local conditional mixture representations. Under an interior condition, this is provided by the standard Bernstein--von Mises theorem for the Dirichlet--Multinomial model. The next lemma extends that approximation to boundary configurations. It shows that positive-frequency coordinates retain the usual Gaussian $n^{-1/2}$-fluctuations, while coordinates with zero true frequencies fluctuate on the smaller $n^{-1}$ scale and converge to Gamma limits. These two components are asymptotically independent.

\begin{lemma}\label{lem:boundary-dirichlet}
For some \(1\le r\le d\), let
$
x^*=(x_1^*,\ldots,x_r^*,0,\ldots,0)\in\Delta_d,
$
with \(x_j^*>0\) for \(j=1,\ldots,r\). Let \(\nn=(n_1,\ldots,n_r,0,\dots,0)\sim\mathrm{MN}(n,x^*)\). Fix \(\eta\in\mathbb Z_+^d\) and consider the (conditionally on $\nn$) Dirichlet-distributed random vector
$
X^{(n)}\sim \pi_{\aa+\nn+\eta}.
$
Write \(X^{(n)}=(X^{(n),+},X^{(n),0})\), with `active' and `inactive' blocks defined by the first \(r\) and last \(d-r\) coordinates of \(X^{(n)}\), and write
$
\hat x=(n_1/n,\ldots,n_d/n).
$
Define
$
\Sigma^+=\operatorname{diag}(x^{*+})-x^{*+}x^{*+\top}.
$
Then, as \(n\to\infty\),
\[
\mathcal L\big(\sqrt n(X^{(n),+}-\hat x^+)\big)
\Rightarrow
\mathcal N(\cdot;0,\Sigma^+)
\]
in probability, and
\[
\mathcal L(nX^{(n),0})
\Rightarrow
\bigotimes_{j=r+1}^{d}
\Gamma(\alpha_j+\eta_j,1),
\]
in probability. If \(r=d\), the above product is interpreted as a point mass at the empty vector. In this case, the Gaussian limit also holds in total variation. The above convergences hold jointly, with limiting law equal to the product of the individual limiting laws.
\end{lemma}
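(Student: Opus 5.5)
We will use the Gamma representation of the Dirichlet law. Conditionally on $\nn$, let $G_1,\dots,G_d$ be independent with $G_j\sim\Gamma(\alpha_j+n_j+\eta_j,1)$ and $S=\sum_j G_j$. Then $X^{(n)}\overset{d}{=}(G_1/S,\dots,G_d/S)$. Since $n_j=0$ for $j>r$, the inactive Gammas $G_j\sim\Gamma(\alpha_j+\eta_j,1)$ have laws that do not depend on $n$. The active Gammas have shapes $a_j:=\alpha_j+n_j+\eta_j$. By the strong law, $n_j/n\to x_j^*>0$ almost surely, so $a_j\to\infty$.

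\textbf{Step 1 (inactive block).} We have $S=n+\theta+|\eta|+O_P(\sqrt n)$, because the active part has mean $\sum_{j\le r}a_j$ and variance of the same order. The inactive part is $O_P(1)$. Hence $S/n\to1$ in conditional probability, on the almost-sure event where $n_j/n\to x_j^*$. Therefore $nX^{(n)}_j=G_j\cdot n/S$ for $j>r$. By Slutsky, the vector $(nX^{(n)}_j)_{j>r}$ converges in law to $\bigotimes_{j>r}\Gamma(\alpha_j+\eta_j,1)$, and the limiting Gammas are independent.

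\textbf{Step 2 (active block).} Write $G_j=a_j+\sqrt{a_j}\,\xi_j$. The standardized Gamma CLT gives $\xi_j\Rightarrow\mathcal N(0,1)$ independently, with the $\xi_j$ independent of the inactive $G$'s. Expand
\[
\sqrt n\Big(\frac{G_j}{S}-\hat x_j\Big)=\sqrt n\Big(\frac{G_j}{S}-\frac{a_j}{\sum_{l} a_l}\Big)+\sqrt n\Big(\frac{a_j}{\sum_l a_l}-\frac{n_j}{n}\Big).
\]
The deterministic second term is $O(n^{-1/2})$ by the computation in \eqref{eq:to_use_later}. The inactive $G$'s enter $S$ only at order $O_P(1)$, which is negligible on the $\sqrt n$ scale. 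The first term then equals $\xi_j\sqrt{x_j^*}-x_j^*\sum_{l\le r}\sqrt{x_l^*}\,\xi_l+o_P(1)$ by a delta-method linearization of $g\mapsto g_j/\sum g_l$. The covariance of this limit is $\operatorname{diag}(x^{*+})-x^{*+}x^{*+\top}=\Sigma^+$, using $\sum_{l\le r}x_l^*=1$. This gives the Gaussian limit in conditional law, almost surely along the data, and hence in probability.

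\textbf{Step 3 (joint limit and total variation).} Joint convergence follows because the active limit is a measurable function of the $\xi$'s and the inactive limit of the inactive $G$'s. These are independent, and both Slutsky corrections go to zero in probability, so the joint law converges to the product.

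For $r=d$, total-variation convergence will follow from Scheffé's lemma. One checks via Stirling's formula that the density of $\sqrt n(X^{(n),+}-\hat x^+)$, restricted to its first $d-1$ free coordinates, converges pointwise to the corresponding Gaussian density. Alternatively, one can invoke the classical Bernstein--von Mises theorem for the regular multinomial model with a Dirichlet prior that is smooth and positive at an interior point \citep[Chapter 10]{vaart_asymptotic_1998}.

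I expect the main obstacle to be this total-variation statement, specifically the uniform control needed for Stirling's approximation. Citing van der Vaart's theorem, with the prior tilted by the fixed $\eta$, which is still continuous and positive at $x^*$, avoids it. The second delicate point is handling the singular covariance $\Sigma^+$ on the face of the simplex: since the coordinates sum to one, the statement is to be read on the first $r-1$ coordinates, with the last coordinate determined.
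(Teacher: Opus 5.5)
Your proposal is correct and takes essentially the same route as the paper. Both use the Gamma representation, then $S/n\to1$ with Slutsky for the inactive block, a Gamma CLT with linearization of $g\mapsto g_j/\sum_l g_l$ for the active block, independence for the joint limit, and van der Vaart's Bernstein--von Mises theorem for the total-variation claim when $r=d$. The differences are cosmetic: you center each active Gamma at its mean $a_j$ and split off the deterministic $O(n^{-1/2})$ gap between the Dirichlet mean and $\hat x$, whereas the paper centers directly at $n_j=n\hat x_j$ and lets the CLT absorb that bias.
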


\begin{proof}
We use the gamma representation of the Dirichlet distribution. In particular, we have
$$
	X_j^{(n)}\overset{d}{=}\frac{G_j^{(n)}}{G_0^{(n)}},
	\qquad j=1,\dots,d,
$$
for independent gamma random variables
$
G_j^{(n)}\sim \Gamma(\alpha_j+n_j+\eta_j,1),\ j=1,\ldots,d,
$
and where $
G_0^{(n)}=\sum_{\ell=1}^d G_\ell^{(n)}\sim
\Gamma\left(n+\sum_{j=1}^d\alpha_j+|\eta|,1\right)$. It holds that
\(G_0^{(n)}/n\to1\) in probability, and since, for \(j>r\), we have \(n_j=0\), it follows
\[
nX^{(n),0}
=
\frac{n}{G_0^{(n)}}(G_{r+1}^{(n)},\dots,G_d^{(n)})
\Rightarrow
(G_{r+1},\dots,G_d),
\]
by Slutsky's theorem, where $G_j\sim\Gamma(\alpha_j+\eta_j,1)$ are independent. This proves the gamma limit for the inactive block.

We now consider the active coordinates. Define
\[
H_j^{(n)}
=
\frac{G_j^{(n)}-n\hat x_j}{\sqrt n},
\qquad j=1,\ldots,r.
\]
Since \(n\hat x_j=n_j\), \(n_j/n\to x_j^*\), and \(\alpha_j+\eta_j\) is fixed, we obtain from the central limit theorem that
$
H^{(n)}=(H_1^{(n)},\ldots,H_r^{(n)})
\Rightarrow
H\sim N\bigl(0,\operatorname{diag}(x^{*+})\bigr)
$
in probability. Next, we note
\[
\sqrt n(X_j^{(n)}-\hat x_j)
=
\frac{n}{G_0^{(n)}}
\left\{
H_j^{(n)}
-
\hat x_j
\frac{G_0^{(n)}-n}{\sqrt n}
\right\}.
\]
with
\[
\frac{G_0^{(n)}-n}{\sqrt n}
=
\sum_{\ell=1}^r H_\ell^{(n)}
+
\frac{\sum_{\ell=r+1}^d G_\ell^{(n)}}{\sqrt n}.
\]
The second term on the right-hand side vanishes in probability since  $G_\ell^{(n)}\sim \Gamma(\alpha_\ell + \eta_\ell,1)$ for $\ell=r+1,\dots,d$. Hence, since, as observed above, \(n/G_0^{(n)}\to1, \hat x_j\to x^*_j\)in probability,
\[
\sqrt n(X_j^{(n)}-\hat x_j)
=
H_j^{(n)}
-
x^*_j\sum_{\ell=1}^r H_\ell^{(n)}
+
o_{\mathbb P}(1),
\qquad j=1,\ldots,r,
\]
and by the continuous mapping theorem,
$
\sqrt n(X^{(n),+}-\hat x^+)
\Rightarrow
N(\cdot;0,\Sigma^+),
$
where
$
\Sigma^+
=
\operatorname{diag}(x^{*+})
-
x^{*+}x^{*+\top}.
$
The limiting covariance is obtained through the linear map
\(H\mapsto H-x^{*+}(1^\top H)\) applied to
\(H\sim N(0,\operatorname{diag}(x^{*+}))\).

For the joint limit, note that by independence and the above considerations,
$$
(H_1^{(n)},\ldots,H_r^{(n)},G_{r+1}^{(n)},\ldots,G_d^{(n)})
\Rightarrow N(0,\operatorname{diag}(x^{*+})) \otimes
\bigotimes_{j=r+1}^{d}
\Gamma(\alpha_j+\eta_j,1).
$$
Moreover, as shown above,
\[
\sqrt n(X^{(n),+}-\hat x^+)
=
H^{(n)}
-
x^{*+}\,1^\top H^{(n)}
+
o_{\mathbb P}(1),
\qquad
nX^{(n),0}
=
(G_{r+1}^{(n)},\ldots,G_d^{(n)})
+
o_{\mathbb P}(1).
\]
Another application of Slutsky's theorem and the continuous mapping theorem then proves the claim.

Finally, if \(r=d\), there is no inactive block. In that case, the usual finite-dimensional Bernstein--von Mises theorem for regular statistical models, e.g., Theorem $10.1$ in \cite{vaart_asymptotic_1998}, gives the Gaussian convergence in total variation.
\end{proof}

\begin{lemma}\label{lm:decomposition_TV}
Let $\{\mu_n\}_n$ and $\{\nu_n\}_n$ be two sequences of probability measures on $\sX = \sX_1 \times \dots \times \sX_k$ which can be decomposed as
\[
\mu_n(\de x_{1:k}) = \mu_{n, 1}(\de x_1)\prod_{i = 2}^k\mu_{n, i}(\de x_i | x_{1:i-1}), \quad \nu_n(\de x_{1:k}) = \nu_{n, 1}(\de x_1)\prod_{i = 2}^k\nu_{n, i}(\de x_i | x_{1:i-1}),
\]
where $\mu_{n, 1}(\de x_1)$ and $\nu_{n, 1}(\de x_1)$ are probability measures on $\sX$, while $\mu_{n, i}(\de x_i | x_{1:i-1})$ and $\nu_{n, i}(\de x_i | x_{1:i-1})$ are transition probability kernels on $\sX_i$. Let $d(\cdot,\cdot)$ be either the total variation, $d_{\mathrm{TV}}$, or the bounded-Lipschitz, $d_{\mathrm{BL}}$, distance between probability measures. Assume that
\[
d(\mu_{n, 1}(\de x_1),\nu_{n, 1}(\de x_1)) \to 0, \quad \sup_{x_{1:i-1}} \, d(\mu_{n, i}(\de x_i | x_{1:i-1}),\nu_{n, i}(\de x_i | x_{1:i-1}) \to 0,
\]
as $n \to \infty$ for every $i = 2, \dots, k$. Then $d(\mu_n,\nu_n) \to 0$, as $n \to \infty$.
\end{lemma}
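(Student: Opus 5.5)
The plan is a hybrid (telescoping) argument along the factorisation. For $j=0,\dots,k$ let $\rho_n^{(j)}$ be the probability measure on $\sX$ whose first $j$ factors are taken from $\mu_n$ and whose last $k-j$ factors are taken from $\nu_n$:
\[
\rho_n^{(j)}(\de x_{1:k})=\mu_{n,1}(\de x_1)\prod_{i=2}^{j}\mu_{n,i}(\de x_i\mid x_{1:i-1})\prod_{i=j+1}^{k}\nu_{n,i}(\de x_i\mid x_{1:i-1}),
\]
with the obvious conventions at $j=0$ and $j=1$. Then $\rho_n^{(0)}=\nu_n$ and $\rho_n^{(k)}=\mu_n$. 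By the triangle inequality,
\[
d(\mu_n,\nu_n)\le\sum_{j=1}^k d\big(\rho_n^{(j)},\rho_n^{(j-1)}\big),
\]
so it suffices to show that each consecutive difference vanishes. The measures $\rho_n^{(j)}$ and $\rho_n^{(j-1)}$ share the factors $1,\dots,j-1$ (from $\mu_n$) and $j+1,\dots,k$ (from $\nu_n$), and differ only in the $j$-th factor.

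Fix a test function $f$ with $\|f\|_\infty\le 1$ (TV case), or with $\|f\|_{\mathrm{BL}}\le1$ (BL case). Integrate out the common tail first:
\[
g_j(x_{1:j}):=\int f(x_{1:k})\prod_{i=j+1}^k\nu_{n,i}(\de x_i\mid x_{1:i-1}),
\]
which again satisfies $\|g_j\|_\infty\le 1$. Then
\[
\int f\,\de\rho_n^{(j)}-\int f\,\de\rho_n^{(j-1)}=\int\Big[\int g_j(x_{1:j})\big(\mu_{n,j}-\nu_{n,j}\big)(\de x_j\mid x_{1:j-1})\Big]\prod_{i<j}\mu_{n,i}(\de x_i\mid x_{1:i-1}).
\]
Bounding the inner bracket by $\sup_{x_{1:j-1}}d(\mu_{n,j},\nu_{n,j})$, up to the normalising factor of $d$, gives $d(\rho_n^{(j)},\rho_n^{(j-1)})\le\sup_{x_{1:j-1}}d(\mu_{n,j}(\cdot\mid x_{1:j-1}),\nu_{n,j}(\cdot\mid x_{1:j-1}))$. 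For $j=1$ one uses $d(\mu_{n,1},\nu_{n,1})$ instead. Summing the $k$ terms and invoking the hypotheses closes the TV case: every $x_j\mapsto g_j(x_{1:j-1},x_j)$ is an admissible bounded measurable test function, so the argument is complete there.

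The main obstacle is the BL case. The same computation requires $x_j\mapsto g_j(x_{1:j-1},x_j)$ to be Lipschitz with a bounded constant, and integrating $f$ against the later kernels $\nu_{n,i}$ need not preserve the Lipschitz property. Indeed, the statement fails as worded: with $\mu_{n,1}=\delta_{1/n}$, $\nu_{n,1}=\delta_0$ and a shared deterministic kernel $x_2=\mathbf 1\{x_1>0\}$, all hypotheses hold but $d_{\mathrm{BL}}(\mu_n,\nu_n)$ stays bounded away from $0$. Ordering the hybrids the other way does not help, because the tail kernels then come from $\mu_n$ and the same issue arises. The plan for $d_{\mathrm{BL}}$ therefore adds the regularity the proof needs: assume the kernels $\nu_{n,i}$, for $i\ge2$, map BL functions to functions that are Lipschitz in all earlier coordinates, with $\mathrm{BL}$ constants bounded uniformly in $n$. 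Under that assumption each $g_j$ has uniformly bounded BL norm, and the estimate above goes through with a constant factor.

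In the intended application this assumption has to be checked directly. There the kernels are the Dirichlet-mixture conditionals $\tilde q$ and are smooth in the conditioning states, since the mixture weights are polynomial in $x,x'$ and uniformly truncated by Lemma~\ref{lem:smoother-remainder}. Alternatively, one can apply the lemma only in the TV form, which is fully proved by the argument above.
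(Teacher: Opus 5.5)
Your TV argument is the paper's argument. The paper treats $k=2$ with the hybrid $\rho_n=\mu_{n,1}\nu_{n,2}$, bounds $d(\mu_n,\rho_n)$ by the uniform kernel distance and $d(\rho_n,\nu_n)$ by $d(\mu_{n,1},\nu_{n,1})$, and then inducts. Your telescoping over $\rho_n^{(j)}$ is the same argument written out for general $k$.

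Your objection to the BL case is correct, and it exposes a gap in the paper's own proof rather than in yours. The paper obtains $d(\rho_n,\nu_n)\le d(\mu_{n,1},\nu_{n,1})$ by treating $x_1\mapsto\int f(x_1,x_2)\,\nu_{n,2}(\mathrm{d}x_2\mid x_1)$ as an element of $\mathcal F(\sX_1)$. That is automatic for $[0,1]$-valued measurable $f$. It is unjustified for bounded-Lipschitz $f$ unless $\nu_{n,2}$ is regular in $x_1$. Your example is a valid counterexample: $\mu_{n,1}=\delta_{1/n}$, $\nu_{n,1}=\delta_0$, common kernel $\delta_{\mathbf 1\{x_1>0\}}$, so $\mu_n=\delta_{(1/n,1)}$ and $\nu_n=\delta_{(0,0)}$. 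Hence the $d_{\mathrm{BL}}$ version is false as worded. The other half of the paper's argument, $d(\mu_n,\rho_n)\le\sup_{x_1}d(\mu_{n,2}(\cdot\mid x_1),\nu_{n,2}(\cdot\mid x_1))$, is fine for BL, since the sections $x_2\mapsto f(x_1,x_2)$ keep the BL norm of $f$.

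One correction to your last paragraph: the repair needed for the application is easier than you suggest.
\begin{itemize}
\item In the Bernstein--von Mises step, one of the two sequences is the product limit $\bigotimes_i\Lambda_i$. The hypotheses verified there have the form $\sup_{x_{i-1}}d_{\mathrm{BL}}\bigl(\mathcal L_{\tilde q_{i\mid 1:k}(\cdot\mid x_{i-1},Y^{(n)}_{1:k})}(T_{i,n}(X_i)),\Lambda_i\bigr)\to0$.
\item Since $d$ is symmetric, you may label the product limit as $\nu_n$. Its kernels $\nu_{n,i}(\cdot\mid x_{1:i-1})=\Lambda_i$ do not depend on the earlier coordinates. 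So every $g_j$ inherits the BL norm of $f$, and your extra hypothesis holds trivially with constant $1$.
\item The Dirichlet-mixture conditionals $\tilde q$ then play the role of the $\mu_{n,i}$. In your telescoping these appear only as outer integrating measures or inside the difference bounded by a supremum over $x_{1:j-1}$. So no smoothness of $\tilde q$ in the conditioning states needs to be checked; you placed the regularity burden on the wrong factor.
\end{itemize}
This restricted BL version is exactly what the paper's use of the lemma requires, so that downstream result survives.
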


\begin{proof}
We prove the result for $k = 2$, as the general case follows by induction.
Let $\{\rho_n\}_n$ be a sequence of probability measures on $\sX$ defined as
\[
\rho_n(\de x) = \mu_{n, 1}(\de x_1)\nu_{n, 2}(\de x_2 | x_1).
\]
By the triangular inequality we have that
\begin{equation}\label{eq:triangle_TV}
d(\mu_n,\nu_n) \leq d(\mu_n,\rho_n)+d(\rho_n,\nu_n).
\end{equation}
If $d = d_{\mathrm{TV}}$, let $\mathcal{F}(A)$ be the class of all measurable functions $f \, : \, A \, \to \, [0,1]$, while if $d = d_{\mathrm{BL}}$, let it denote the class of bounded Lipschitz function with Lipschitz constant smaller than $1$. Then we can write
\[
\begin{aligned}
&d(\rho_n,\nu_n)\\
&\quad = \sup_{f \in \mathcal{F}(\sX)} \left\lvert\int_\sX f(x) \rho_n(\de x) - \int_\sX f(x) \nu_n(\de x)\right\rvert\\
&\quad = \sup_{f \in \mathcal{F}(\sX)} \left\lvert\int_{\sX_1} \int_{\sX_2}f(x)\nu_{n, 2}(\de x_2 | x_1) \mu_{n, 1}(\de x) - \int_{\sX_1} \int_{\sX_2}f(x)\nu_{n, 2}(\de x_2 | x_1) \nu_{n, 1}(\de x)\right\rvert\\
&\quad\leq \sup_{g \in \mathcal{F}(\sX_1)} \left\lvert\int_{\sX_1} g(x_1) \mu_{n, 1}(\de x_1) - \int_{\sX_1} g(x_1) \nu_{n, 1}(\de x_1)\right\rvert\\
&\quad= d(\mu_{n, 1}(\de x_1),\nu_{n, 1}(\de x_1)) \to 0,
\end{aligned}
\]
as $n \to \infty$ by assumption. Similarly we have
\[
\begin{aligned}
d(\mu_n,\rho_n) & = \sup_{f \in \mathcal{F}(\sX)} \left\lvert\int_\sX f(x) \mu_n(\de x) - \int_\sX f(x) \rho_n(\de x)\right\rvert\\
& \leq \int_{\sX_1} \left[ \sup_{g \in \mathcal{F}(\sX_2)} \left\lvert\int_{\sX_2} g(x_2) \mu_{n, 2}(\de x_2 | x_1) - \int_{\sX_2} g(x_2) \nu_{n, 2}(\de x_2 | x_1)\right\rvert \right] \mu_{n, 1}(\de x_1)\\
& \leq \sup_{x_1} \, d(\mu_{n, 2}(\de x_2 | x_1),\nu_{n, 2}(\de x_2 | x_1)) \to 0,
\end{aligned}
\]
as $n \to \infty$. The result then follows by \eqref{eq:triangle_TV}.
\end{proof}

We can now prove the remaining claims of Theorem \ref{thm:consistency-bvm}.

\begin{proof}[Proof of \eqref{active-BvM} and \eqref{boundary-gamma} in Theorem~\ref{thm:consistency-bvm}]
For each \(i=1,\ldots,k\), define
\[
T_{i,n}(x_i)
=
\bigl(\sqrt n(x_i^+-\hat x_i^+),\,n x_i^0\bigr),
\]
and let
\[
\Lambda_i
=
\mathcal N(\cdot;0,\Sigma_i^+)
\otimes
\bigotimes_{j=r_i+1}^{d}\Gamma(\alpha_j,1),
\]
with the convention that the second factor is a point mass at the empty
vector when \(r_i=d\). By Lemma~\ref{lm:decomposition_TV} and the Markov property, it suffices to prove that
\[
d_{\mathrm{BL}}
\left(
\mathcal L_{\post_{1\given 1:k}(\cdot\mid\data{1:k})}
\bigl(T_{1,n}(X_1)\bigr),
\Lambda_1
\right)
\to0
\]
and, for every \(i=2,\ldots,k\),
\[
\sup_{x_{i-1}\in\Delta_d}
d_{\mathrm{BL}}
\left(
\mathcal L_{\tilde{\post}_{i\given 1:k}(\cdot\mid x_{i-1},\data{1:k})}
\bigl(T_{i,n}(X_i)\bigr),
\Lambda_i
\right)
\to0
\]
in \(\mu_{1:k}^{(\infty)}\)-probability, where \(d_{\mathrm{BL}}\) denotes the
bounded-Lipschitz metric between probability distributions. We prove the second
display; the argument for \(i=1\) is simpler. Fix \(i=2,\ldots,k\). By the HMM structure,
\[
\tilde{\post}_{i\given 1:k}(\de x_i\mid x_{i-1},\data{1:k})
=
\int_{\Delta_d}
\tilde{\post}_{i\given i-1,i+1}
(\de x_i\mid x_{i-1},x',\data{i})
r_{i+1\given 1:k}(x'\mid x_{i-1},\data{1:k})\,\de x' .
\]
Therefore, by convexity of \(d_{\mathrm{BL}}\),
\[
\begin{aligned}
&\sup_{x_{i-1}\in\Delta_d}
d_{\mathrm{BL}}
\left(
\mathcal L_{\tilde{\post}_{i\given 1:k}(\cdot\mid x_{i-1},\data{1:k})}
\bigl(T_{i,n}(X_i)\bigr),
\Lambda_i
\right)\\
&\qquad\le
\sup_{(x_{i-1},x_{i+1})\in\Delta_d^2}
d_{\mathrm{BL}}
\left(
\mathcal L_{\tilde{\post}_{i\given i-1,i+1}
(\cdot\mid x_{i-1},x_{i+1},\data{i})}
\bigl(T_{i,n}(X_i)\bigr),
\Lambda_i
\right).
\end{aligned}
\]
Hence it suffices to prove that the right-hand side converges to zero in
\(\mu_{1:k}^{(\infty)}\)-probability. Fix \(\epsilon>0\). By Lemma~\ref{lem:smoother-remainder}, there exists \(M>0\), independent of \(x_{i-1}\) and \(x_{i+1}\), such that 
\[
\sum_{|\kk|>M}\sum_{|\kk'|\ge 0}
\tilde w_{\nn_i,\kk,\kk'}(t_i - t_{i-1},t_{i+1} - t_{i})
+
\sum_{|\kk'|>M}\sum_{|\kk|\ge 0}
\tilde w_{\nn_i,\kk,\kk'}(t_i - t_{i-1},t_{i+1} - t_{i})
\le \frac{\epsilon}{6}.
\]
Therefore, by convexity of $d_{\text{BL}}$, and since $d_{\text{BL}}$  is bounded by $2$,
\[
\begin{aligned}
&\sup_{(x_{i-1},x_{i+1})\in\Delta_d^2}
d_{\mathrm{BL}}
\left(
\mathcal L_{\tilde{\post}_{i\given i-1,i+1}
(\cdot\mid x_{i-1},x_{i+1},\data{i})}
\bigl(T_{i,n}(X_i)\big),
\Lambda_i
\right)\\
&\quad\le
\sup_{(x_{i-1},x_{i+1})\in\Delta_d^2}
\sum_{\kk,\kk'\in\integer_+^d}
\tilde w_{\nn_i,\kk,\kk'}
(t_i-t_{i-1},t_{i+1}-t_i)
d_{\mathrm{BL}}
\left(
\mathcal L_{\pi_{\alpha+\nn_i+\kk+\kk'}}
\big(T_{i,n}(X_i)\big),
\Lambda_i
\right)\\
&\quad\le
\sup_{(x_{i-1},x_{i+1})\in\Delta_d^2}
\sum_{|\kk|\le M}
\sum_{|\kk'|\le M}
\tilde w_{\nn_i,\kk,\kk'}
(t_i-t_{i-1},t_{i+1}-t_i)
d_{\mathrm{BL}}
\left(
\mathcal L_{\pi_{\alpha+\nn_i+\kk+\kk'}}
\big(T_{i,n}(X_i)\big),
\Lambda_i
\right)
+
\frac{\epsilon}{3},
\end{aligned}
\]

We proceed separating the terms with no inactive shift (that is, for which we have $k_j = k_j'=0$ for all $j=r_i+1,\dots,d$) from the others. For the former group, an application of Lemma \ref{lem:boundary-dirichlet}, applied with \(\eta=\kk+\kk'\), gives
\[
\mathcal L_{\pi_{\alpha+\nn_i+\kk+\kk'}}
\bigl(T_{i,n}(X_i)\bigr)
\Rightarrow
N(\cdot;0,\Sigma_i^+)
\otimes
\bigotimes_{j=r_i+1}^{d}
\Gamma(\alpha_j,1)
=
\Lambda_i.
\]
The restriction of the sum in the second to last display over such $\kk$ and $\kk'$ is then seen to be smaller than $\varepsilon/3$ whenever $n$ is sufficiently large. We conclude considering
\begin{equation}
\label{Eq:Split}
\begin{split}
	&\sum_{\substack{|\kk|\le M,\ |\kk'|\le M\\
k_j+k_j'>0\ \text{for some }j>r_i}}
\tilde w_{\nn_i,\kk,\kk'}
(t_i-t_{i-1},t_{i+1}-t_i)
d_{\mathrm{BL}}
\left(
\mathcal L_{\pi_{\alpha+\nn_i+\kk+\kk'}}
\big(T_{i,n}(X_i)\big),
\Lambda_i
\right)\\
&\le
2\sum_{\substack{|\kk|\le M,\ |\kk'|\le M\\
k_j+k_j'>0\ \text{for some }j>r_i}}
\tilde w_{\nn_i,\kk,\kk'}
(t_i-t_{i-1},t_{i+1}-t_i)
\end{split}
\end{equation}
For all $|\kk|\le M,|\kk'|\le M$ satisfying $k_j + k_j' >0$ for some $j=r_i+1,\dots,d$,
set
\[
m_i(\kk,\kk')
=
\sum_{j=r_i+1}^{d}(k_j+k'_j) > 0.
\]
From \eqref{def_C_n}, for fixed $\kk,\kk'$, we see that the
Dirichlet--multinomial factor \(C_{\nn_i,\kk,\kk'}\) in
\(\tilde w_{\nn_i,\kk,\kk'}
(t_i-t_{i-1},t_{i+1}-t_i)\) depends on $n$ only through
\[
\frac{\Gamma(\theta+n)}
{\Gamma(\theta+n+|\kk|+|\kk'|)}
\prod_{j=1}^{d}
\frac{\Gamma(\alpha_j+n_{i,j}+k_j+k'_j)}
{\Gamma(\alpha_j+n_{i,j})}.
\]
By the standard gamma-ratio asymptotics,
$$
	\frac{\Gamma(\theta+n)}
	{\Gamma(\theta+n+|\kk|+|\kk'|)}
	=O\left( n^{-|\kk|-|\kk'|}\right).
$$
Also, for all \(j\le r_i\), since \(n_{i,j}/n\to x_{i,j}^*>0\),
$$
\frac{\Gamma(\alpha_j+n_{i,j}+k_j+k'_j)}
{\Gamma(\alpha_j+n_{i,j})}
=
O\left(n^{k_j+k'_j}\right).
$$ 
For inactive coordinates \(j>r_i\), we have \(n_{i,j}=0\), and the above gamma ratios are \(O(1)\). It follows that
\[
C_{\nn_i,\kk,\kk'}
=
O\bigg(
n^{-|\kk|-|\kk'|}
n^{\sum_{j=1}^{r_i}(k_j+k'_j)}
\bigg)
=
O\bigg(n^{-m_i(\kk,\kk')}\bigg) = o(1),
\]
since $m_i(\kk,\kk') >0$. Consequently, the unnormalised weights $r_{\nn,\kk,\kk'}$ in \eqref{def_C_n} also vanish asymptotically. Upon noting that the normalising constants in the definition of $\tilde w_{\nn_i,\kk,\kk'}(t_i-t_{i-1},t_{i+1}-t_i)$ are bounded below by $r_{\nn_i,0,0}$, which is strictly positive and independent of $n$, we conclude that also $\tilde w_{\nn_i,\kk,\kk'}(t_i-t_{i-1},t_{i+1}-t_i)$ vanishes asymptotically for all $|\kk|\le M,|\kk'|\le M$ satisfying $k_j + k_j' >0$ for some $j=r_i+1,\dots,d$. Thus, the finite sum \eqref{Eq:Split} is smaller than $\varepsilon/3$ when $n$ is large enough. Combined with the previously obtained bounds, this proves
\[
\sup_{(x_{i-1},x_{i+1})\in\Delta_d^2}
d_{\mathrm{BL}}
\left(
\mathcal L_{\tilde{\post}_{i\given i-1,i+1}
(\cdot\mid x_{i-1},x_{i+1},\data{i})}
\bigl(T_{i,n}(X_i)\bigr),
\Lambda_i
\right)
\to0
\]
in \(\mu_{1:k}^{(\infty)}\)-probability. The case \(i=1\) is analogous but uses the one-sided filtering mixture rather than the two-sided bridge mixture. Thus,
\[
\mathcal L
\left(
(T_{i,n}(X_i))_{i=1}^{k}
\mid \data{1:k}
\right)
\Rightarrow
\bigotimes_{i=1}^{k}\Lambda_i
\]
in \(\mu_{1:k}^{(\infty)}\)-probability. Recalling the definition of \(T_{i,n}\)
and \(\Lambda_i\), this is exactly the joint version of
\eqref{active-BvM} and \eqref{boundary-gamma}. The two marginal convergences
follow immediately.

Finally, if \(r_i=d\) for all \(i=1,\ldots,k\), the inactive blocks are empty, and the stronger conclusion in total variation is implied by the interior part of Lemma \ref{lem:boundary-dirichlet} (via the total variation part of Lemma \ref{lm:decomposition_TV}).
\end{proof}


\subsection{Proof of Proposition~\ref{cor:extended-locality}}\label{app:extended-locality}

We first need a preliminary lemma.
\begin{lemma}\label{lm:continuity_TV}
Let $p_s(x,x')$ and $\bridge{s}{t}{u}(z\mid x,x')$ be as in \eqref{eq:wf-transition-compact-sec2} and \eqref{eq:wf-bridge-mixture-sec2}. Then the mappings
\[
x \, \mapsto\, p_s(x,\cdot), \qquad (x, x') \, \mapsto \, \bridge{s}{t}{u}(\cdot \mid x,x')
\]
are continuous in total variation distance for every $s, t, u$.
\end{lemma}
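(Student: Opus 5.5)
For the transition kernel the claim is immediate. Lemma~\ref{lem:transition-lipschitz} gives $\TV{p_s(x,\cdot)}{p_s(y,\cdot)}\le L_s\|x-y\|_1$ with $L_s<\infty$ for every $s>0$. So $x\mapsto p_s(x,\cdot)$ is Lipschitz, hence continuous, in total variation.

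For the bridge I would use the same scheme as the proof of Proposition~\ref{thm:bridge-main}, now with deterministic endpoints. Write
\[
\bridge{s}{t}{u}(A\mid x,x')=\frac{D_{x,x'}(A)}{D_{x,x'}(\Delta_d)},
\]
where
\[
D_{x,x'}(A)=\sum_{\kk,\kk'}C_{\kk,\kk'}\,p_{\kk}^{x}(h)\,p_{\kk'}^{x'}(h')\,\pi_{\aa+\kk+\kk'}(A),\qquad h=s-t,\ h'=u-s.
\]
Let $(x_m,x_m')\to(x,x')$. It suffices to show $\sup_A|D_{x_m,x_m'}(A)-D_{x,x'}(A)|\to0$ and that the denominators are bounded away from zero. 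The lower bound follows as in \eqref{eq:lower_normalizing_const}: the $\kk=\kk'=\mathbf 0$ term gives
\[
D_{x,x'}(\Delta_d)\ge p_0(h)\,p_0(h')\,C_{\mathbf 0,\mathbf 0}>0,
\]
uniformly in $(x,x')$. Since $|a/b-a'/b'|\le|a-a'|/b+a'|b-b'|/(bb')$, uniform convergence of the numerators (take $A=\Delta_d$ for the denominators) then gives total-variation convergence of the ratios.

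To get uniform convergence in $A$, I would bound the differences with $|\pi_{\aa+\kk+\kk'}(A)|\le1$:
\[
\sup_A|D_{x_m,x_m'}(A)-D_{x,x'}(A)|\le\sum_{\kk,\kk'}C_{\kk,\kk'}\,p_{|\kk|}(h)\,p_{|\kk'|}(h')\,\Bigl|\tbinom{|\kk|}{\kk}\tbinom{|\kk'|}{\kk'}\bigl(x_m^{\kk}x_m'^{\kk'}-x^{\kk}x'^{\kk'}\bigr)\Bigr|.
\]
The key step, and the only real obstacle, is domination that is uniform in the endpoints. The multinomial factors are probabilities, so they are at most $1$. By Lemma~\ref{lm:ineq_C}, $C_{\kk,\kk'}\le e^{bk\log k+bk'\log k'}$, and by Proposition~\ref{prop:coal-tail}, $p_k(h)\le e^{-ck^2h}$. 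The number of $\kk$ with $|\kk|=k$ is polynomial in $k$. Hence each summand is bounded by $2\,\mathrm{poly}(k,k')\,e^{bk\log k-ck^2h}e^{bk'\log k'-ck'^2h'}$, which is summable over $(k,k')$ and independent of $(x_m,x_m')$. Each summand tends to zero because $x\mapsto x^{\kk}$ is continuous, so dominated convergence for series finishes the argument. Equivalently, one can truncate at $|\kk|,|\kk'|\le M$ with tail below $\epsilon$ and observe that the finitely many remaining terms converge.
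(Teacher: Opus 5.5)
Your proposal is correct. For the transition kernel you take a shorter route than the paper. You invoke Lemma~\ref{lem:transition-lipschitz}, which already gives the global bound $\TV{p_s(x,\cdot)}{p_s(y,\cdot)}\le L_s\|x-y\|_1$. The paper instead re-derives continuity: it truncates the mixture at $|\kk|\le M$ using Proposition~\ref{prop:coal-tail}, with tail mass below $\epsilon$ uniformly in $x$, and then uses continuity of the finitely many weights $p_{\kk}^x(s)$. Your version is quantitative and needs no new argument. The paper's version is only qualitative, but it serves as the template it then reuses for the bridge.

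For the bridge, your argument is essentially the paper's, which only says that continuity follows from the uniform truncation of Lemma~\ref{lem:smoother-remainder} ``by a similar argument''. You make explicit the step that phrase glosses over: the weights $w_{\kk,\kk'}$ are normalized. You handle the normalizing constant through the uniform lower bound $C_{\mathbf 0,\mathbf 0}\,p_0(h)\,p_0(h')$ and the ratio inequality. This is exactly what turns uniform-in-$A$ convergence of the unnormalized mixtures into total-variation convergence of the bridge laws.

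Two minor remarks. First, Lemma~\ref{lm:ineq_C} as stated holds only up to a multiplicative constant. At $\kk=\kk'=\mathbf 0$ its right-hand side is $1$, while $C_{\mathbf 0,\mathbf 0}=\Gamma(\theta)/\prod_j\Gamma(\alpha_j)$, which equals $6$ for $\aa=(2,2)$. This is harmless for your domination step, which only needs summability. Second, you can replace termwise dominated convergence by the tensorization bound $\sum_{|\kk|=k}\bigl|\binom{k}{\kk}(x_m^{\kk}-x^{\kk})\bigr|\le k\|x_m-x\|_1$ from the proof of Lemma~\ref{lem:transition-lipschitz}. Your same estimate, with the uniform lower bound on the denominators, then shows the bridge is also Lipschitz in $(x,x')$ in total variation, not merely continuous.
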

\begin{proof}
Fix $\epsilon>0$. By Proposition~\ref{prop:coal-tail}, there exists $M>0$ such that
\[
\sum_{|\kk|>M}p_{\kk}^x(s)=\sum_{m>M}p_m(s)<\epsilon
\]
uniformly in $x\in\Delta_d$. Hence, for any $x,x'\in\Delta_d$,
\[
\begin{aligned}
d_{\mathrm{TV}}\!\big(p_s(x,\cdot),p_s(x',\cdot)\big)
&\le \sum_{|\kk|\le M}\bigl|p_{\kk}^x(s)-p_{\kk}^{x'}(s)\bigr|+4\epsilon.
\end{aligned}
\]
Moreover
\[
p_{\kk}^x(s)=p_{|\kk|}(s)\binom{|\kk|}{\kk}x^\kk
\]
 is continuous in $x$ for every fixed $\kk$. Therefore $x\mapsto p_s(x,\cdot)$ is continuous.
 
 As regards $\bridge{s}{t}{u}(\cdot \mid x,x')$, by Lemma \ref{lem:smoother-remainder} there exists $M > 0$ such that
 \[
\sum_{|\kk|>M}\sum_{|\kk'|\ge0}w_{\kk,\kk'}(s-t, u-s)
\;+\;
\sum_{|\kk'|>M}\sum_{|\kk|\ge0}w_{\kk,\kk'}(s-t, u-s)
<\epsilon,
\]
and continuity follows by a similar argument.
\end{proof}

\begin{proof}[Proof of Proposition~\ref{cor:extended-locality}]

We first prove \eqref{eq:filter-fullpast-tv}. Fix $\epsilon > 0$ and notice that by the Markov property we can write
\[
\post_{i\mid 1:i-1}(\de z\mid \data{1:i-1})
=\int_{\Delta_d}\trans{i}{i-1}(\de z\mid x)\,
\post_{i-1\mid 1:i-1}(\de x\mid \data{1:i-1}).
\]
By Lemma \ref{lm:continuity_TV} there exists $\delta>0$ such that
\[
d_{\mathrm{TV}}\!\big(\trans{i}{i-1}(\cdot\mid x),\trans{i}{i-1}(\cdot\mid x_{i-1}^*)\big)<\epsilon,
\]
whenever $\|x-x_{i-1}^*\|_2<\delta$, which implies
\[
\begin{aligned}
d_{\mathrm{TV}}\!\big(\post_{i\mid 1:i-1}(\cdot\mid \data{1:i-1}),
\trans{i}{i-1}(\cdot\mid x_{i-1}^*)\big)
&\le \int_{\Delta_d}
d_{\mathrm{TV}}\!\big(\trans{i}{i-1}(\cdot\mid x),\trans{i}{i-1}(\cdot\mid x_{i-1}^*)\big)
\post_{i-1\mid 1:i-1}(\de x\mid \data{1:i-1})\\
&\le \epsilon +\int_{A_{\delta,x_{i-1}^*}}\post_{i-1\mid 1:i-1}(\de x\mid \data{1:i-1}),
\end{aligned}
\]
with $A_{\delta,x}$ as in \eqref{def:set_A}, and the last term on the right hand side vanishes as $n \to \infty$ by Theorem~\ref{thm:consistency-bvm}. The result follows by arbitrariness of $\epsilon$.

For \eqref{eq:bridge-allother-tv}, write
\[
\Pi_i^{(n)}(\de x,\de x')
:=
\post_{i-1,i+1\mid 1:i-1,i+1:k}(\de x,\de x'\mid \data{1:i-1},\data{i+1:k}).
\]
As regards \eqref{eq:bridge-allother-tv}, the result follows analogously by writing
\[
\post_{i\mid 1:i-1,i+1:k}(\de z\mid \data{1:i-1},\data{i+1:k})
=\int_{\Delta_d}\int_{\Delta_d}
\bridge{i}{i-1}{i+1}(\de z\mid x,x')\,
\post_{i-1, i+1 | {1:k}}( \de x, \de x' | \data{1:k}),
\]
where $\post_{i-1, i+1 | {1:k}}( x, x' | \data{1:k})$ is the joint density of the signal at times $t_{i-1}$ and $t_{i+1}$ conditional on $\data{1:k}$. 
\end{proof}



\section{Additional numerical illustrations}
\label{app:numerical-illustrations}

This section collects the numerical material not included in the main paper. We first record additional law-level displays: a more moderate two-type predictive regime, the remaining predictive rate comparisons across inspection gaps, an inspection-time Gaussian comparison, and a three-type boundary regime. We then close with two tables, one summarizing sensitivity to $\aa$ and $\Delta$ and one checking the finite-start approximation used for the surrogate transition law.


\subsection{Two-type baseline regime}

For comparison with the near-boundary regime shown in the main paper, consider a two-type model with $\aa=(2,2)$, $x^*=0.15$, and inspection gap $\Delta=0.05$. Figure~\ref{fig:predictive-transition-baseline-supp} compares the exact predictive law with the transition law started at the empirical composition $\hat x$, and Figure~\ref{fig:diversity-twotype-supp} gives the corresponding push-forward under Simpson diversity. The discrepancy is visibly smaller than in the near-boundary regime shown in the main paper.

\begin{figure}[tbp]
\centering
\includegraphics[width=\textwidth]{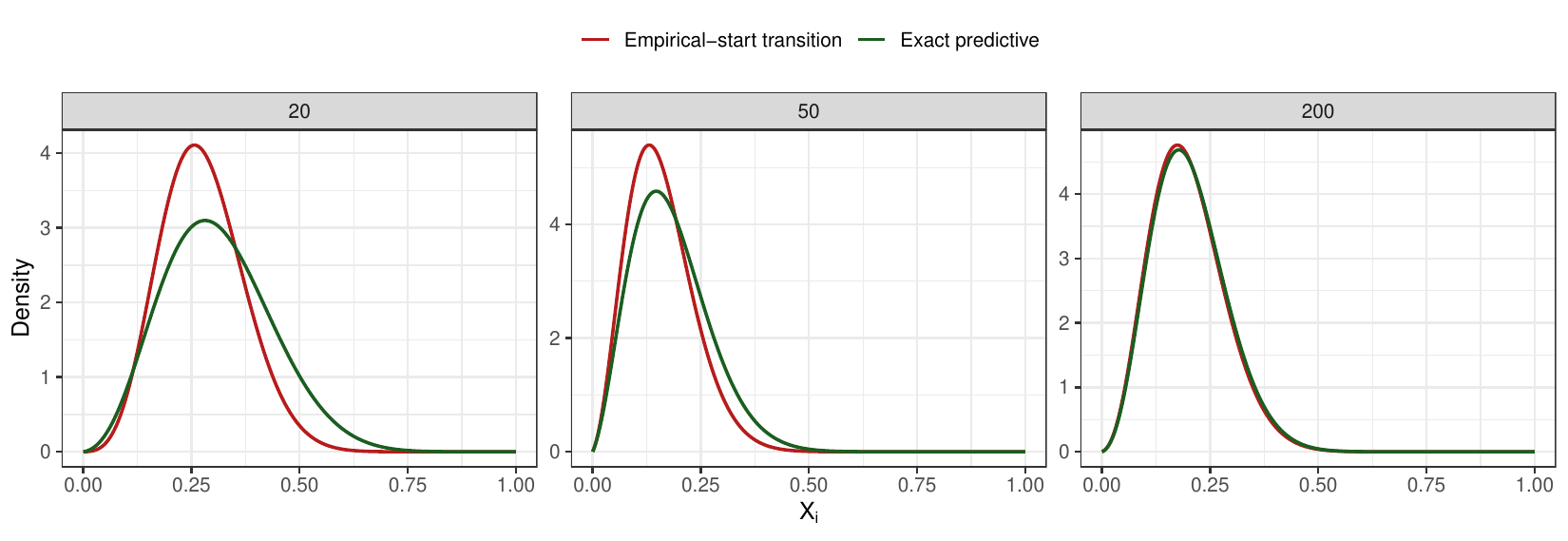}
\caption{Baseline two-type predictive comparison with $\aa=(2,2)$, $x^*=0.15$, $\Delta=0.05$, and $n\in\{20,50,200\}$. From left to right, the panels correspond to $n=20,50,200$ and compare the exact predictive density $\post_{i\mid i-1}(\cdot\mid \data{i-1})$ with the Wright--Fisher transition density $\trans{i}{i-1}(\cdot\mid \hat x_{i-1})$ started at the empirical composition.}
\label{fig:predictive-transition-baseline-supp}
\end{figure}

\begin{figure}[tbp]
\centering
\includegraphics[width=\textwidth]{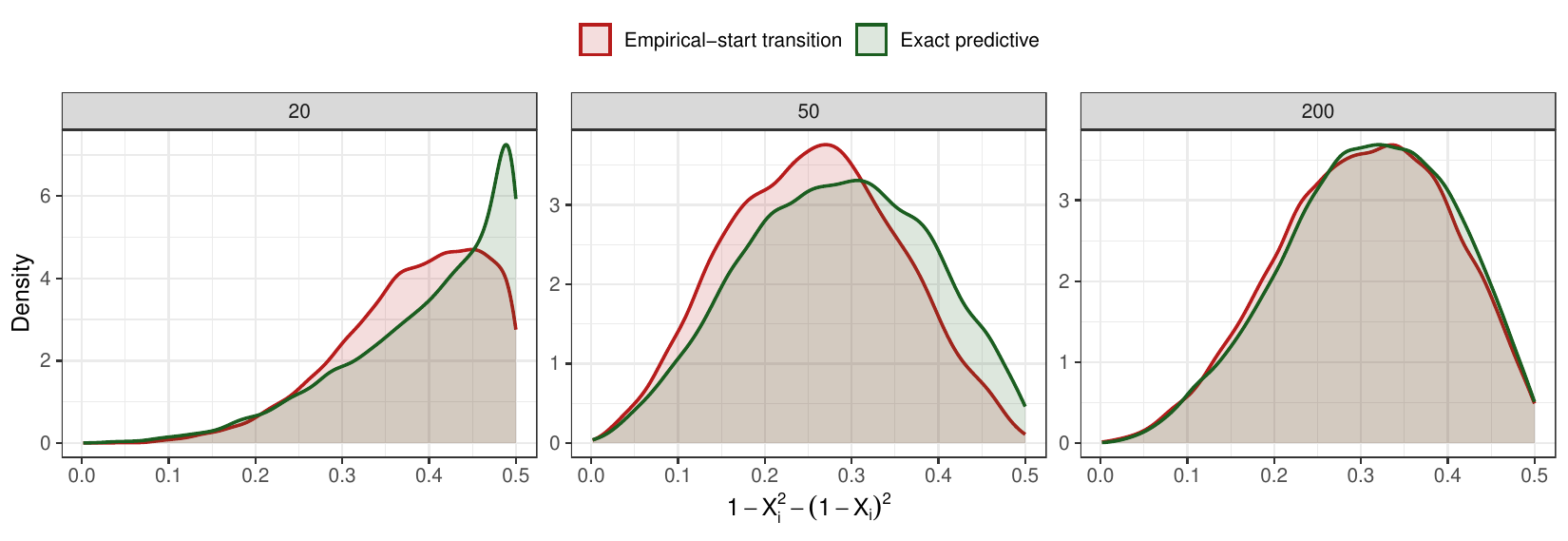}
\caption{Baseline two-type predictive law of Simpson diversity with $\aa=(2,2)$, $x^*=0.15$, $\Delta=0.05$, and $n\in\{20,50,200\}$. From left to right, the panels correspond to $n=20,50,200$ and show the push-forwards of the exact predictive law and of the surrogate transition law under the Simpson-diversity functional.}
\label{fig:diversity-twotype-supp}
\end{figure}


\subsection{Rate behavior across gaps}

Figure~\ref{fig:rate-behavior-gap-supp} complements the main-text rate comparison by displaying the same experiment at the shorter gap $\Delta=0.05$ and the longer gap $\Delta=0.50$, still with $\aa=(2,2)$, $x^*\in\{0.01,0.15,0.50\}$, $n\in\{20,50,100,200,400\}$, and $10$ repeated datasets per $n$. 

\begin{figure}[tbp]
\centering
\includegraphics[width=\textwidth]{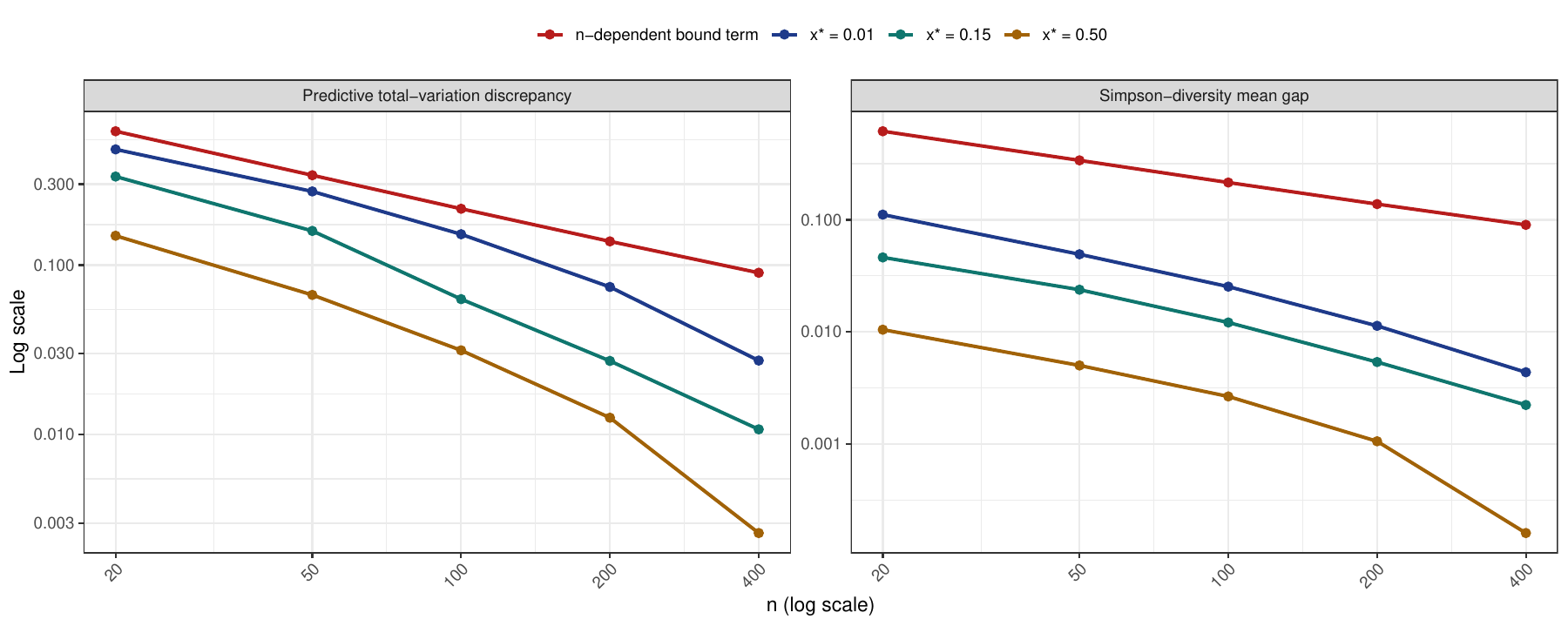}

\medskip

\includegraphics[width=\textwidth]{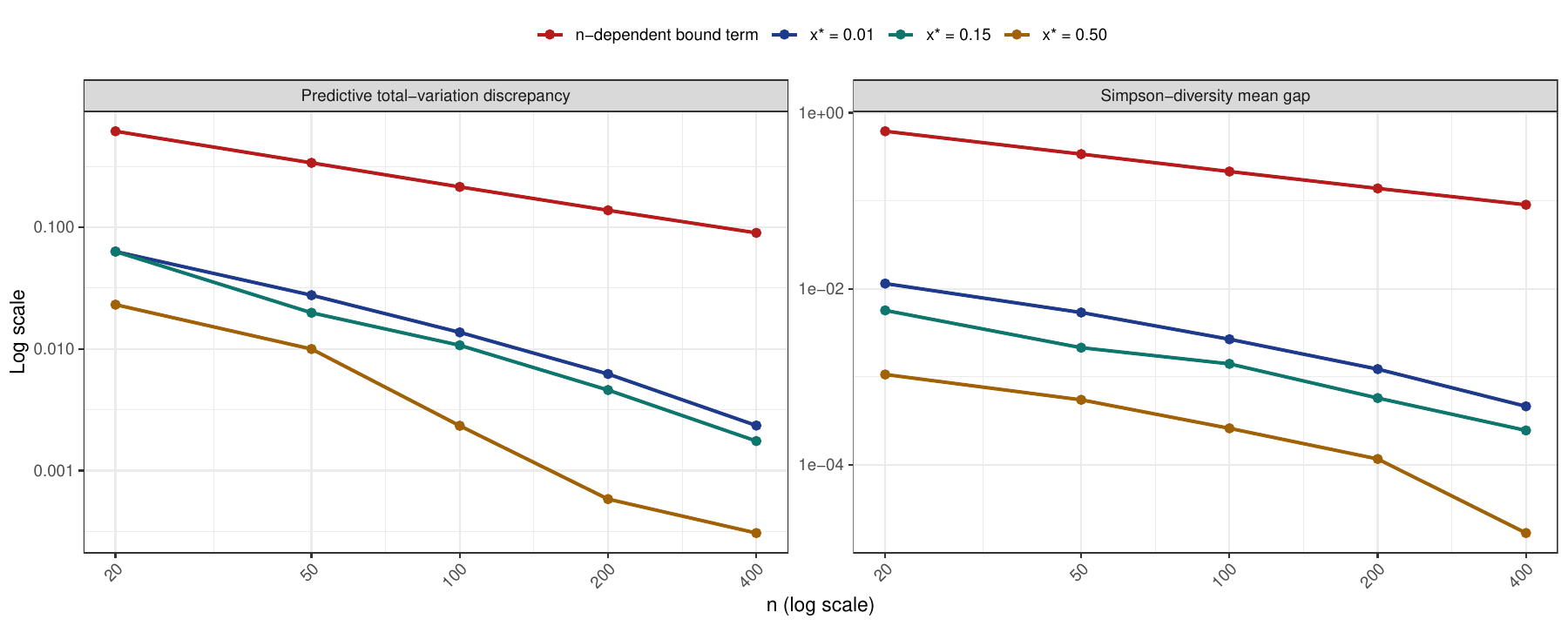}
\caption{Exact two-type predictive rate comparison across the true compositions $x^*\in\{0.01,0.15,0.50\}$, with $\aa=(2,2)$, for the shorter gap $\Delta=0.05$ (top) and the longer gap $\Delta=0.50$ (bottom). In each panel, the left plot shows the empirical maximum predictive total-variation discrepancy and the $n$-dependent factor in the theorem, while the right plot shows the average Simpson-diversity mean gap and the same reference term.}
\label{fig:rate-behavior-gap-supp}
\end{figure}

\subsection{Gaussian approximation at an inspection time}

We also record the inspection-time Gaussian comparison used as a qualitative illustration of the joint-smoothing Bernstein--von Mises regime. The setting is the two-type model with $\aa=(2,2)$, inspection gap $\Delta=0.05$, and latent path $(0.15,0.20,0.17)$ at three observation times.

\begin{figure}[tbp]
\centering
\includegraphics[width=\textwidth]{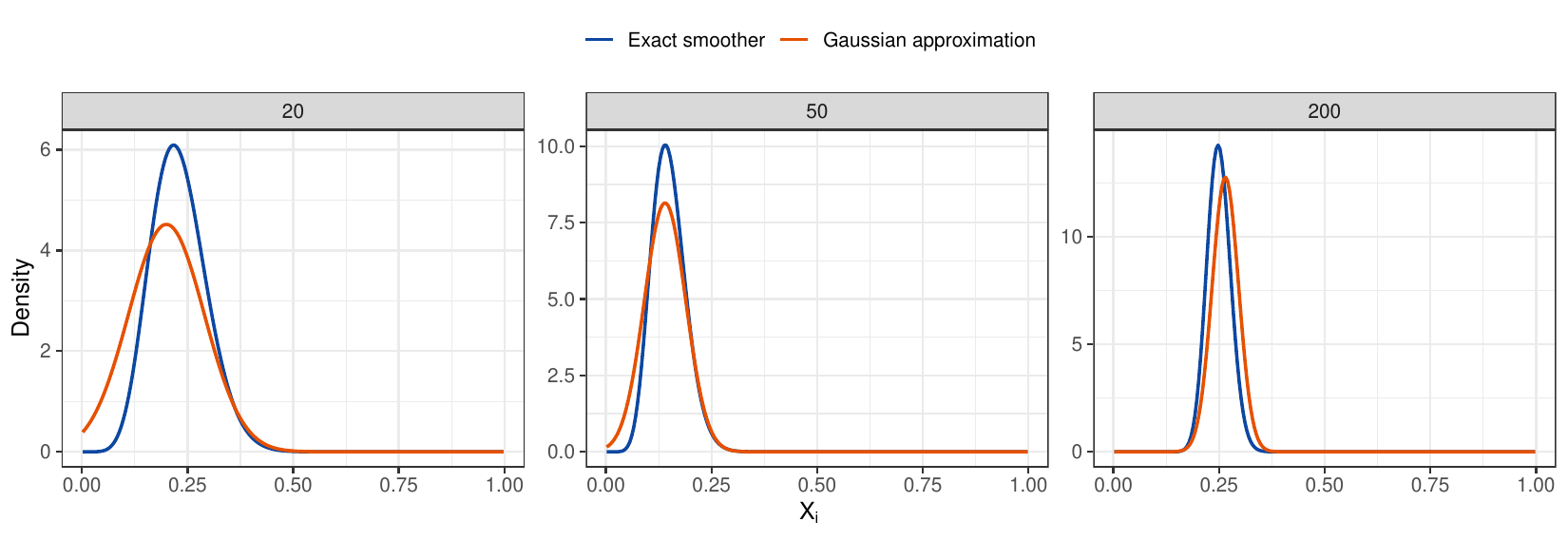}
\caption{Inspection-time Gaussian comparison in the two-type model with $\aa=(2,2)$, inspection gap $\Delta=0.05$, latent path $(0.15,0.20,0.17)$, and $n\in\{20,50,200\}$. From left to right, the panels correspond to $n=20,50,200$ and compare the exact marginal smoother $\post_{2\mid 1:3}(\cdot\mid \data{1:3})$ with the Gaussian law $\mathcal N(\cdot;\hat x_2,\Sigma_2)$ at the middle inspection time.}
\label{fig:smoother-gaussian-supp}
\end{figure}


\subsection{Multi-type case}

To complement the boundary-aware inspection-time asymptotics in Theorem~\ref{thm:consistency-bvm} in the main paper, the next displays use a three-type model with $\aa=(2,2,2)$, true frequency vector $x^*=(0.60,0.40,0)$, and inspection gap $\Delta=0.05$. Thus one coordinate lies exactly on the boundary at the conditioning time, and the observed empirical composition has zero third component as well. Figure~\ref{fig:predictive-simplex-threetype-supp} compares the exact predictive posterior with the empirical-start transition law directly on the simplex, while Figure~\ref{fig:diversity-threetype-supp} compares their push-forwards under Simpson diversity.

\begin{figure}[tbp]
\centering
\includegraphics[width=\textwidth]{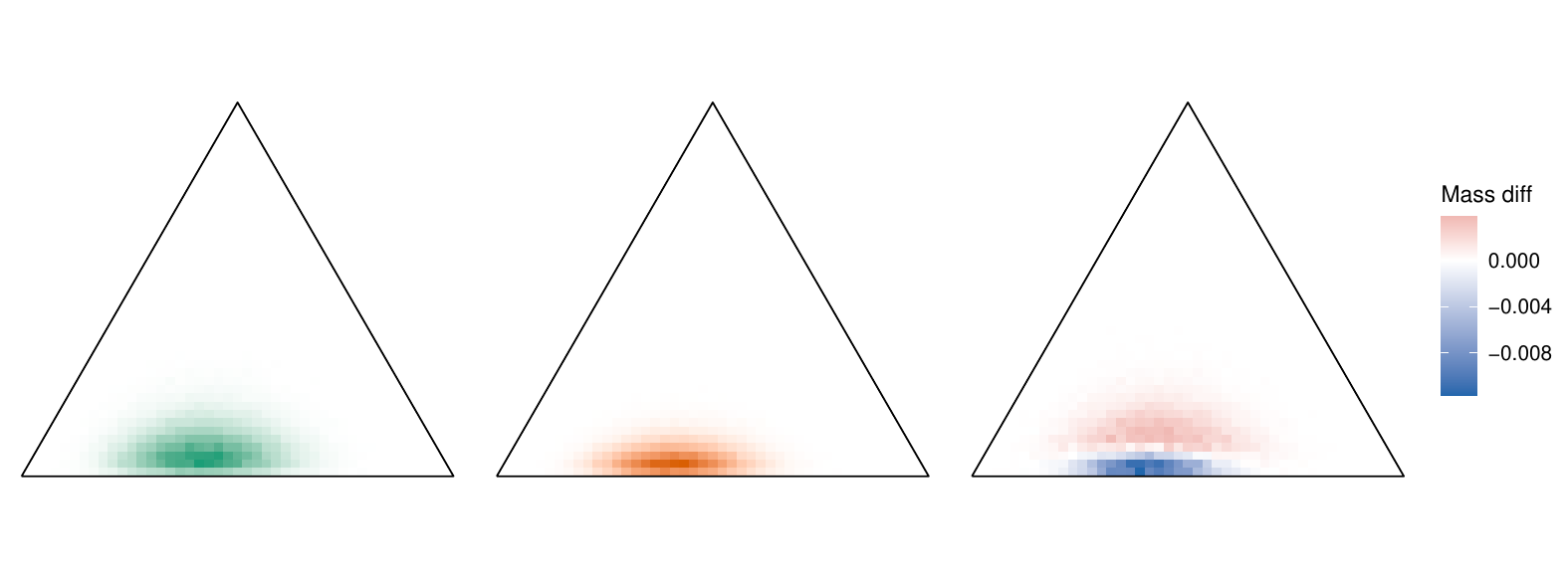}
\caption{Three-type predictive comparison in a boundary regime with $\aa=(2,2,2)$, $x^*=(0.60,0.40,0)$, $\Delta=0.05$, and $n=50$, shown on the simplex in barycentric coordinates. From left to right, the panels show the exact predictive posterior $\post_{i\mid i-1}(\cdot\mid \data{i-1})$, the empirical-start transition law $\trans{i}{i-1}(\cdot\mid \hat x_{i-1})$, and their binned mass difference (exact minus empirical-start transition).}
\label{fig:predictive-simplex-threetype-supp}
\end{figure}

\begin{figure}[tbp]
\centering
\includegraphics[width=\textwidth]{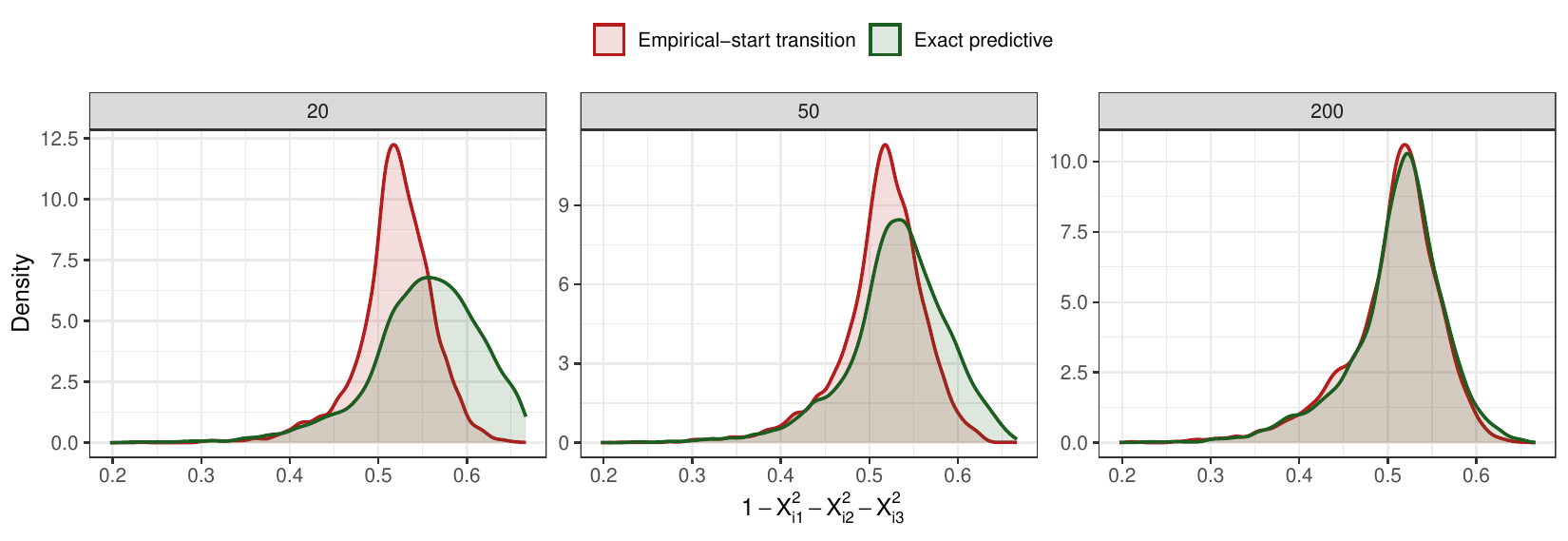}
\caption{Push-forward, under Simpson diversity, of the exact predictive posterior $\post_{i\mid i-1}(\cdot\mid \data{i-1})$ and of the empirical-start transition law $\trans{i}{i-1}(\cdot\mid \hat x_{i-1})$ in the same three-type boundary regime with $\aa=(2,2,2)$, $x^*=(0.60,0.40,0)$, $\Delta=0.05$, and $n\in\{20,50,200\}$. From left to right, the panels correspond to $n=20,50,200$.}
\label{fig:diversity-threetype-supp}
\end{figure}


\subsection{Sensitivity to mutation and gap}

Table~\ref{tab:regime-sensitivity-supp} records a compact sensitivity check at $n=50$ and $x_{1}^*=0.15$, comparing the symmetric mutation choices $\aa=(0.5,0.5)$, $(2,2)$, and $(10,10)$ across three inspection gaps. It reports the average and maximum predictive total-variation discrepancy together with the average Simpson-diversity gap. The qualitative dependence on $\Delta$ is clear: larger gaps lead to smaller empirical discrepancies. The dependence on $\aa$ is less uniform, especially in the short-gap regime.

\begin{table}[tbp]
\centering
\small
\begin{tabular}{c|c|c|c}
\hline
$\aa$ & $\Delta$ & Average TV / Max TV & Average diversity gap \\
\hline
$(0.5,0.5)$ & 0.02 & 0.121 / 0.121 & 0.038 \\
$(2,2)$     & 0.02 & 0.165 / 0.265 & 0.069 \\
$(10,10)$   & 0.02 & 0.465 / 0.636 & 0.044 \\
\hline
$(0.5,0.5)$ & 0.05 & 0.062 / 0.063 & 0.019 \\
$(2,2)$     & 0.05 & 0.097 / 0.149 & 0.040 \\
$(10,10)$   & 0.05 & 0.263 / 0.386 & 0.004 \\
\hline
$(0.5,0.5)$ & 0.20 & 0.018 / 0.031 & 0.008 \\
$(2,2)$     & 0.20 & 0.042 / 0.063 & 0.019 \\
$(10,10)$   & 0.20 & 0.044 / 0.056 & 0.000 \\
\hline
\end{tabular}
\vspace{2mm}
\caption{Sensitivity grid at $n=50$ and $x^*=0.15$ over $50$ repeated datasets. ``Average TV / Max TV'' refers to the predictive total-variation discrepancy. ``Average diversity gap'' refers to the average absolute gap in the Simpson-diversity event probability under the exact predictive law and the empirical-start transition law.}
\label{tab:regime-sensitivity-supp}
\end{table}


\subsection{Surrogate transition approximation}

The surrogate transition law is evaluated through a finite-dimensional approximation, namely a coalescent block-counting chain started from a large finite level $N_{\mathrm{large}}=500$. Table~\ref{tab:nlarge-stability-supp} shows that recomputing the surrogate transition from a stricter finite start changes the law only slightly in representative two-type regimes. This is enough for the present purpose, which is only to visualize the trend of the asymptotic approximation rather than to benchmark a numerical solver.

\begin{table}[tbp]
\centering
\begin{tabular}{c|c|c}
\hline
$x^*$ & $\Delta$ & TV distance \\
\hline
0.15 & 0.05 & 0.0040 \\
0.25 & 0.10 & 0.0019 \\
0.35 & 0.20 & 0.0008 \\
\hline
\end{tabular}
\vspace{2mm}
\caption{Finite-start stability of the surrogate transition law. The reported values are total-variation distances between surrogate transition laws computed from two different finite coalescent starts in the representative regimes listed.}
\label{tab:nlarge-stability-supp}
\end{table}

\end{document}